\numberwithin{equation}{section}
\numberwithin{figure}{section}
\theoremstyle{plain}
\newtheorem{thm}{\protect\theoremname}
  \theoremstyle{plain}
  \newtheorem{cor}[thm]{\protect\corollaryname}
  \theoremstyle{remark}
  \newtheorem{rem}[thm]{\protect\remarkname}
  \theoremstyle{plain}
  \newtheorem{lem}[thm]{\protect\lemmaname}
  \theoremstyle{plain}
  \newtheorem{prop}[thm]{\protect\propositionname}
 \theoremstyle{definition}
 \newtheorem*{defn*}{\protect\definitionname}
  \theoremstyle{definition}
  \newtheorem{example}[thm]{\protect\examplename}
  \theoremstyle{definition}
  \newtheorem{defn}[thm]{\protect\definitionname}
 \theoremstyle{plain}
 \newtheorem{conjecture}[thm]{Conjecture}
\newtheorem{assm}[thm]{Assumption}
  \providecommand{\corollaryname}{Corollary}
  \providecommand{\definitionname}{Definition}
  \providecommand{\examplename}{Example}
  \providecommand{\lemmaname}{Lemma}
  \providecommand{\propositionname}{Proposition}
  \providecommand{\remarkname}{Remark}
\providecommand{\theoremname}{Theorem}
\begin{document}

\title{Freezing and decorated Poisson point processes}

\author{Eliran Subag}
\author{Ofer Zeitouni}\thanks{Research partially supported by a grant of the Israel Science Foundation.}

\begin{abstract}
The limiting extremal processes of the branching Brownian motion (BBM),
the two-speed BBM, and the branching random walk are known to be randomly
shifted decorated Poisson point processes (SDPPP). In the proofs of
those results, the Laplace functional of the limiting extremal process
is shown to satisfy $L\left[\theta_{y}f\right]=g\left(y-\tau_{f}\right)$
for any nonzero, nonnegative, compactly supported, continuous function
$f$, where $\theta_{y}$ is the shift operator, $\tau_{f}$ is a
real number that depends on $f$, and $g$ is a real function that
is independent of $f$. We show that, under some assumptions, this
property characterizes the structure of SDPPP. Moreover, when it holds,
we show that $g$ has to be a convolution of the Gumbel distribution
with some measure.

The above property of the Laplace functional is closely related to
a `freezing phenomenon' that is expected by physicists to occur in
a wide class of log-correlated fields, and which has played an important
role in the analysis of various models. Our results shed light on
this intriguing phenomenon and provide a natural tool for proving an
SDPPP structure in these and other models.
\end{abstract}
\maketitle

\section{Introduction}

The branching Brownian motion (BBM) is a continuous-time branching
process described as follows. At time $t=0$ a single particle starts
a standard Brownian motion $x$ from the origin, continuing for a
randomly distributed exponential time $T$ independent of $x$. At
this moment, the particle splits into to two particles. Each, in turn,
performs a Brownian motion starting from $x(T)$ and is subject to
the same splitting rule. Thus, at time $t$ there is a random number
of particles $N\left(t\right)$ and we denote their positions by $X_{1}\left(t\right),\ldots,X_{N\left(t\right)}\left(t\right)$.

The BBM has been extensively studied over the last decades. The seminal
works of Mckean \cite{McKean}, Bramson \cite{Bramson78,Bramson83},
and Lalley and Sellke \cite{Lalley&Sellke} were mainly concerned
with the maximum (or rightmost particle) $\mathcal{M}_{t}=\max_{i\leq N\left(t\right)}X_{i}\left(t\right)$
and its relations to the Fisher-Kolmogorov-Petrovsky-Piscounov (F-KPP)
equation \cite{KPP}. In particular, it was shown in \cite{Lalley&Sellke}
that with appropriate recentering term $m_{t}$,
\begin{equation}
\lim_{t\to\infty}\mathbb{P}\left\{ \mathcal{M}_{t}-m_{t}\leq x\right\} =\mathbb{E}\exp\left\{ -e^{-\sqrt{2}\left(x-\log\left(CZ\right)/\sqrt{2}\right)}\right\} ,\label{eq:maxBBM}
\end{equation}
where $Z$ is the limit of the so-called derivative martingale and
$C$ is a constant.

Recently, BBM became the object of renewed interest with the main
focus being the behavior of extreme values of the process \cite{ABBS,ABKGene,ABKPois,ABKergodic,ABKextremal,BruDeBBM}.
Perhaps the most important result in this direction is a remarkable
description of the limiting extremal process, i.e. the limit in distribution
\begin{equation}
  \label{eq-120314a}
\xi=\lim_{t\rightarrow\infty}\xi_{t}\triangleq\lim_{t\rightarrow\infty}\sum_{i\leq N\left(t\right)}\delta_{X_{i}\left(t\right)-m_{t}},
\end{equation}
given independently by Arguin, Bovier and Kistler \cite{ABKextremal}
and A\"{i}d\'{e}kon, Berestycki, Brunet and Shi \cite{ABBS}.

In the sequel, we denote by
$\overset{d}{=}$ equality in distribution.
For a point process $D=\sum_{i\geq 1} \delta_{d_i}$,
we denote by $\theta_{x} D$  the
shift of $D$ by
$x$,
i.e. $\theta_{x}D=\sum_{i\geq1}\delta_{d_{i}+x}$.

The following notions describe the structure of limits alluded to above.
\begin{defn}
(1) A point process $\psi$ is a decorated Poisson point process
(DPPP) of intensity $\nu$ and decoration $D$
(denoted $\psi\sim DPPP\left(\nu,D\right)$),
if $\psi\overset{d}{=}\sum_{i\geq1}\theta_{\zeta_{i}}D_{i}$
where $\zeta=\sum_{i\geq1}\delta_{\zeta_{i}}$ is a Poisson process
with intensity $\nu$, $D$ is some point
process, and $D_{i}$, $i\geq1$, are copies of $D$, independent of each
other and of $\zeta$.

(2) A point process
$\varphi$ is a randomly
shifted decorated Poisson point process (SDPPP)
of intensity $\nu$, decoration $D$ and shift $S$ (denoted
$\varphi\sim SDPPP\left(\nu,D,S\right)$) if for
$\psi\sim DPPP\left(\nu,D\right)$ and some independent
(of $\psi$)
random variable $S$ it holds that
$\varphi\overset{d}{=}\theta_{S}\psi$.
\end{defn}
In this notation \cite{ABBS,ABKextremal} showed that, with some point
process $D$, the limiting extremal process $\xi$ in
\eqref{eq-120314a}
satisfies
\[
\xi\sim SDPPP\left(e^{-\sqrt{2}x}dx,D,\log\left(\sqrt{2}CZ\right)/\sqrt{2}\right),
\]
where $C$, $Z$ are as in (\ref{eq:maxBBM}).

Our work centers around the relations between two properties of the
limiting extremal process, one being the specific structure we have
just described. The other is related to an intriguing  `freezing' phenomenon
observed first by Spohn and Derrida in \cite{DerSpo}, exploiting
Bramson's results on the F-KPP equation \cite{Bramson83}. They define
the function $G_{t,\beta}\left(y\right)\triangleq\mathbb{E}\exp\left\{ -e^{-\beta y}\sum_{i=1}^{N\left(t\right)}e^{\beta X_{i}\left(t\right)}\right\} $
and conclude it exhibits the shape of a `traveling wave' as $t\to\infty$.
That is, for some $m_{t,\beta}$ increasing in $t$,
\begin{equation}
\label{eq:frzseq1}
\lim_{t\to\infty}G_{t,\beta}\left(y+m_{t,\beta}\right)=g_{\beta}\left(y\right).
\end{equation}
Moreover, they show that the profile of the wave and the velocity
`freeze' at a certain transition temperature $\beta=\beta_{c}$ ($\beta$
is the inverse temperature):
\begin{equation}
\label{eq:frzseq2}
\mbox{for any }\beta>\beta_{c}:\,\, g_{\beta}\left(x\right)=g_{\beta_{c}}\left(x\right)\,\,\mbox{and}\,\, m_{t,\beta}+c_{\beta}=m_{t},
\end{equation}
with some constants $c_{\beta}$ depending on $\beta$.

We introduce the shift-Laplace functional of a point process $\xi$:
\[
L_{\xi}\left[\left.f\,\right|\, y\right]\triangleq\mathbb{E}\left\{ \exp\left(-\int\theta_{y}fd\xi\right)\right\} ,
\]
where $f:\mathbb{R}\rightarrow\mathbb{R}$ is measurable, nonnegative
function and where we abuse notation by writing $\theta_{y}f\left(x\right)=f\left(x-y\right)$.
Denote $f\approx g$ whenever two functions are equal up to translation
and let $\left[g\right]$ denote the equivalence class of $g$ under
this relation. Since $\xi_{t}\rightarrow\xi$ in distribution, it
is easily seen (under some boundedness condition, cf. Lemma \ref{lem:str_frz}) that freezing, put in other words, means that
\begin{equation}
\label{eq:frzphsc}
\mbox{for any }f\in\left\{ e^{\beta y}:\,\beta>\beta_{c}\right\}:\,\, L_{\xi}\left[\left.f\,\right|\,\cdot\,\right]\approx g_{\beta_{c}}\left(\cdot\right).
\end{equation}
Inspired by this, we introduce the following notion, using the notation
$C_{c}^{+}\left(\mathbb{R}\right)$ for the class of nonnegative,
compactly supported, continuous, real functions that are
not identically
equal to $0$.
\begin{defn}
  A shift-Laplace functional  is
  uniquely supported on $[g]$ if
$L_{\xi}\left[\left.f\,\right|\,\cdot\,\right]\approx g\left(\cdot\right)$
for any $f\in C_{c}^{+}\left(\mathbb{R}\right)$.
\end{defn}

The following is a direct corollary of our main result Theorem \ref{thm:main}.
As we shall see in Lemma \ref{lem:g monotone}, when the shift-Laplace
functional is uniquely supported the corresponding function $g$ is
monotone. Thus in order to simplify the notation we restrict below
to the case where $g$ is increasing. The point process $D\left(\xi\right)$
appearing in the statement is defined as the limit, as $y\to\infty$, of $\theta_{-y}\xi$ conditioned on
$\xi((y,\infty))>0$, shifted so its maximum is at $0$ (see Section \ref{sec:(Re)construction}).
Denote by $\mbox{Gum}\left(y\right)\triangleq\exp\left\{ -e^{-y}\right\} $
the standard Gumbel distribution function.
\begin{cor}
\label{cor:intro}Let \textup{$\xi$} be a point process such that
$\mathbb{P}\left\{ \xi\left(\mathbb{R}\right)>0\right\} =1$ and let
$g:\mathbb{\mathbb{R}}\rightarrow\mathbb{\mathbb{R}}$ be an increasing
function. Consider the following statements:
\begin{enumerate}
\item [(US)] $L_{\xi}\left[\left.f\,\right|\,\cdot\,\right]$ is uniquely
supported on $\left[g\right]$.
\item [(SUS)] $L_{\xi}\left[\left.f\,\right|\,\cdot\,\right]$ is uniquely
supported on $\left[g\right]$ and for some random variable $Z$,
\begin{equation}
g\left(y\right)=\int{\rm {Gum}}\left(c\left(y-z\right)\right)d\mu_{Z}\left(z\right)=\mathbb{E}\left\{ {\rm {Gum}}\left(c\left(y-Z\right)\right)\right\} ,\label{eq:g_form}
\end{equation}
where $\mu_{Z}$ is the law of $Z$ and $c>0$.
\item [(SDP)] $\xi\sim SDPPP\left(e^{-cx}dx,D,Z\right)$ for some point
process $D$, random variable $Z$,  and constant $c>0$.
\end{enumerate}
If $g$ satisfies (\ref{eq:21-1}) below, then (SUS) and (SDP) are
equivalent and (SDP) holds with $D$ equal to $D\left(\xi\right)$
up to translation. If, in addition, $D\left(\xi\right)$ satisfies
a certain boundedness condition (Assumption \ref{Assumption:decoration}),
or if $L_{\xi}\left[\left.e^{\beta x}\,\right|\,\cdot\,\right]\approx g\left(\cdot\right)$ for all $\beta>c$,
then all three conditions are equivalent.
\end{cor}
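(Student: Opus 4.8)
The plan is to read the corollary off Theorem \ref{thm:main} together with one explicit computation. That computation --- the shift-Laplace functional of an SDPPP --- simultaneously yields the implication (SDP) $\Rightarrow$ (SUS) and shows \emph{why} $g$ is forced to be a Gumbel convolution; I would start there. Writing a representative of $\xi$ as $\theta_{Z}\psi$ with $\psi\sim DPPP(e^{-cx}dx,D)$ and using the superposition and independence properties of the Poisson process, one obtains, for every $f\in C_{c}^{+}(\mathbb{R})$,
\[
L_{\xi}[f\mid y]=\mathbb{E}\exp\Big(-\int_{\mathbb{R}}\big(1-\mathbb{E}_{D}e^{-\int\theta_{y-u-Z}f\,dD}\big)e^{-cu}\,du\Big),
\]
and the substitution $u=y-Z-v$ collapses this to $\mathbb{E}\exp\big(-e^{-c(y-Z)}I_{f}\big)$, where $I_{f}=\int_{\mathbb{R}}\big(1-\mathbb{E}_{D}e^{-\int\theta_{v}f\,dD}\big)e^{cv}\,dv$. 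Here $I_{f}<\infty$ because $1-\mathbb{E}_{D}e^{-\int\theta_{v}f\,dD}\le\mathbb{P}\big(\int\theta_{v}f\,dD>0\big)$, whose $e^{cv}dv$-integrability is exactly local finiteness of the SDPPP $\xi$, and $I_{f}>0$ since $f\in C_{c}^{+}(\mathbb{R})$ and $D$ is not a.s.\ empty. Setting $I_{f}=e^{-c\tau_{f}}$ gives $L_{\xi}[f\mid y]=\mathbb{E}\{\mathrm{Gum}(c(y-Z-\tau_{f}))\}$, i.e.\ $L_{\xi}[f\mid\cdot]\approx g(\cdot)$ with $g$ of the form (\ref{eq:g_form}); this $g$ is increasing and one checks it satisfies (\ref{eq:21-1}). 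Hence (SDP) $\Rightarrow$ (SUS).

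For the reverse, (SUS) $\Rightarrow$ (SDP), I would invoke Theorem \ref{thm:main} and the reconstruction of Section \ref{sec:(Re)construction}. Under (US) with $g$ satisfying (\ref{eq:21-1}), the conditional law of $\theta_{-y}\xi$ given $\{\xi((y,\infty))>0\}$, recentered at its maximum, converges as $y\to\infty$ to $D(\xi)$; the Gumbel-convolution form (\ref{eq:g_form}) pins the intensity to $e^{-cx}dx$ and identifies the shift with a translate of $Z$; and Theorem \ref{thm:main} provides $L_{\xi}[f\mid\cdot]=L_{SDPPP(e^{-cx}dx,\,D(\xi),\,Z)}[f\mid\cdot]$ for all $f\in C_{c}^{+}(\mathbb{R})$. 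Since $C_{c}^{+}(\mathbb{R})$ is a determining class for point processes, $\xi\sim SDPPP(e^{-cx}dx,D(\xi),Z)$ up to translation --- which is (SDP) with $D=D(\xi)$ as claimed.

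Finally, to let (US) enter the equivalence under one of the two extra hypotheses: since (SUS)$\,\Rightarrow\,$(US) is trivial and we will have (SUS)$\,\Leftrightarrow\,$(SDP), it suffices to prove (US)$\,\Rightarrow\,$(SUS), i.e.\ to upgrade unique support on $[g]$ to the Gumbel form of $g$; by Lemma \ref{lem:g monotone} we may take $g$ increasing. If $D(\xi)$ satisfies Assumption \ref{Assumption:decoration}, its left tail is tame enough that the reconstruction of the previous paragraph goes through and, running the first-paragraph computation for $D(\xi)$, one identifies $g$ with a Gumbel convolution. If instead $L_{\xi}[e^{\beta x}\mid\cdot]\approx g(\cdot)$ for all $\beta>c$, then by Lemma \ref{lem:str_frz} this is the physicists' freezing relation, and feeding the exponentials $e^{\beta x}$ (truncated into $C_{c}^{+}(\mathbb{R})$, then $\beta\to\infty$) through the same inversion again recovers the Gumbel form. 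Either way (SUS) holds.

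The main obstacle, concentrated in Theorem \ref{thm:main} and invoked above, is the convergence-and-integrability bookkeeping of the reconstruction: that the conditional limit defining $D(\xi)$ exists, that $\int(1-\mathbb{E}e^{-\int\theta_{v}f\,dD})e^{cv}\,dv$ is finite and consistent with the intrinsic shift parameters of $\xi$, and that --- absent a boundedness or exponential-freezing hypothesis --- no mass escapes to $-\infty$ to spoil the identification of $g$; this is exactly where (\ref{eq:21-1}) and the two extra assumptions are genuinely used. The passage from the shift-Laplace functional to the Gumbel convolution is, by contrast, just the algebra displayed above.
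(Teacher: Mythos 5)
Your proposal is essentially the paper's intended argument: the corollary is simply read off Theorem \ref{thm:main} (the converse part gives (SDP)$\Rightarrow$(SUS)$\Rightarrow$(US); direct parts (a), (c), (d) give the reverse implications under \eqref{eq:21-1} and the two extra hypotheses, with $D=\hat D(\xi)$ from the reconstruction), and your explicit SDPPP computation is the Section \ref{sec:dppp_to_frz} calculation with the shift $Z$ conditioned in, so the approach matches. Two small cautions, neither of which breaks the corollary: your side claim that the mixture $g(y)=\mathbb{E}\,\mathrm{Gum}(c(y-Z))$ automatically satisfies \eqref{eq:21-1} is false when $Z$ has a right tail heavier than $e^{-cz}$ (then $1-g(x)$ decays at the rate of the tail of $Z$, not $e^{-cx}$) --- harmless here only because \eqref{eq:21-1} is a hypothesis of the corollary; and your ``truncate $e^{\beta x}$ and let $\beta\to\infty$'' gloss of the exponential-freezing case is not how the paper proves part (c) (Lemma \ref{lem:150414-1} decorates the atoms of $\xi$ with independent Poisson processes of intensity $e^{-\beta x}dx$ and takes $\beta\downarrow c$), which is immaterial since you invoke the theorem for that step anyway.
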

By Corollary \ref{cor:frz_specific_func} below, in the presence of (US),
the condition $L_{\xi}\left[\left.e^{\beta x}\,\right|\,\cdot\,\right]\approx g\left(\cdot\right)$
is equivalent to the condition
$\mathbb P\left\{\int e^{\beta x} d\xi (x) < \infty \right\}=1$.
We conjecture (see Conjecture \ref{conj1} below), that the assumptions in the last sentence
in Corollary \ref{cor:intro} are not needed for
 the conclusion.

In the case of BBM, where all three conditions are already known to
occur, the implications of the result are rather conceptual than practical.
However, in other models it  allows one to prove that the limiting
extremal process admits the representation of (SDP) by studying its
Laplace functional. In fact, as we discuss in Section \ref{sec:Relations},
a study of the Laplace functional is the main step in the approach
taken for all the models for which the limiting extremal process is
known to satisfy (SDP): the BBM \cite{ABKextremal}, the two-speed
BBM \cite{BHartung}, and the branching random walk (BRW) \cite{MadauleBRWext}.
In particular, Proposition 3.2 of \cite{ABKextremal} gives exactly
Condition (SUS) above.

We mention the class of logarithmically-correlated (log-correlated,
for short) Gaussian fields \cite{A&Z,CLD,DRSV2012,F&B1,MadauleMax},
which are of great importance in a variety of fields (see Section
\ref{sec:Relations}). Log-correlated Gaussian fields are known to
share many properties with BBM. For example, the distribution of the
maximum \cite{Louidor,BDZGff,MadauleMax}, the overlap of extremal
points \cite{ABKGene,ABKPois,A&Z}, and their limiting Gibbs measures
\cite{RVreview,Webb} behave similarly. It is therefore widely believed
that their limiting extremal processes exist and should also exhibit
the structure of Condition (SDP). Interestingly, Carpentier and Le
Doussal \cite{CLD} postulate that the freezing phenomenon described
above is also satisfied by a wide class of log-correlated Gaussian
fields. For the sub-class of star scale invariant fields freezing
was proved in \cite{MRVfrz}. Other works in the physics literature
that are related to freezing include \cite{Fyodorov,F&B1,Fyo3,Fyo2,Fyo1}.
From those studies it is evident that the freezing phenomenon plays
an important role in the analysis of log-correlated fields. We hope
the link we have made between freezing and the structure of the extremal
process will improve the understanding of both phenomena. In light
of the above we also believe that it provides a natural tool for attacking
the problem of proving (SDP) for the limiting extremal process of
log-correlated fields.

Lastly, let us briefly mention the related problem of the characterization
of decorated Poisson point processes (with no shift, $Z=0$). A point
process is said to be exponentially-$c$-stable, $c>0$, if for three
independent copies of it $\xi$, $\xi_{1}$ and $\xi_{2}$ and any
two numbers $a$, $b$ such that $e^{a}+e^{b}=c$, $\xi\overset{d}{=}\theta_{a}\xi_{1}+\theta_{b}\xi_{2}$.
Brunet and Derrida \cite{BruDeBBMPreliminary} (p. 18) conjectured
that this property is equivalent to (SDP) with $Z=0$. This can be proved
using a representation of the Laplace functional for infinitely divisible processes (see Maillard \cite{Pascal}; implicitely, this also appears in
\cite{DMZ}). It is fairly
simple to see that exponential-$c$-stability is equivalent to uniqueness
of the support of $L_{\xi}\left[\left.f\,\right|\, x\right]$ up to
translation with $g\left(x\right)=\mbox{Gum}\left(cx\right)$ (cf.
Corollary \ref{cor:exp-stb}). Hence, our main result can be seen
as a generalization of this characterization. Let us stress, however,
that the assumption of exponential-stability constraints $g$ to be
of the form just specified and so in this case the relation to Condition
(US) is irrelevant. Apart from the equivalence of exponential-stability
and (SDP) with $Z=0$, a corollary of our main result yields a description
of the decoration process $D$ in terms of the original process $\xi$.

In the next section we give some further definitions and state our
main results. In Section \ref{sec:Relations} we discuss in more detail
the relations to the works mentioned above from the mathematics and
physics literature. We prove one (easy) direction of our main theorem
in Section \ref{sec:dppp_to_frz}. Sections \ref{sec:Basic_properties}
and \ref{sec:(Re)construction} are devoted to results we shall need
in the proof of the other direction, given in Section \ref{sec:mainpf}.
In particular, in Section \ref{sec:(Re)construction} we shall describe
a construction of a point process which will be used as the decoration
of a specific SDPPP process we use in our proofs.
In Section
\ref{sec:Relation-to-Freezing} we discuss the relations between freezing for a limiting process \eqref{eq:frzphsc}
and the definition in terms of the sequence of processes \eqref{eq:frzseq1}, \eqref{eq:frzseq2}.
Finally, a short appendix is devoted to the proof
of a relation between Gumbel distribution functions of different scales.

\section{\label{sec:Main-Results}Main Results}

Some of the results rely on the following assumption. As Proposition
\ref{prop:nice xi} of Section \ref{sec:Basic_properties} states,
equation (\ref{eq:21-1}) holds whenever the intensity measure of
the process is finite on some nonempty, open set.

\begin{assm}

\label{Assumption:regularity-1}The shift-Laplace functional $L_{\xi}\left[\left.f\,\right|\,\cdot\,\right]$
is uniquely supported on $\left[g\right]$ where $g$ is increasing
and $g$ satisfies, with some constant $c=c_{\xi}>0$,
\begin{equation}
\lim_{x\rightarrow\infty}\frac{1-g\left(x+y\right)}{1-g\left(x\right)}=\lim_{x\rightarrow\infty}\frac{\log g\left(x+y\right)}{\log g\left(x\right)}=e^{-cy}.\label{eq:21-1}
\end{equation}
\end{assm}

In Section \ref{sec:(Re)construction} we shall construct, under a
weaker assumption than the above, the point process $D\left(\xi\right)$
(see Definition PP-D). For some of our results we will require it to have one of the
following properties.
\begin{defn}
  (1) The  point process
$D(\xi)$ is said to have
exponential moments if there exist some
$t,\,\epsilon>0$ such that $\mathbb{E}\left[\exp\left\{ tD\left(\xi\right)\left(-\epsilon,0\right)\right\} \right]<\infty$.\\
(2)
The point process $D(\xi)$ is said to satisfy a law of large numbers (LLN) with rate $c$ if
for some function $\alpha:\,\mathbb{R}\rightarrow\mathbb{R}$
and some constant $u>0$,
the point process $\psi_c\sim DPPP\left(e^{-cx}dx,D(\xi)\right)$ satisfies
\begin{equation}
\label{eq:alphalim}
\frac{\psi_c\left(\left(-y,\infty\right)\right)}{\alpha\left(y\right)}\overset{prob.}{\longrightarrow}u,\,\,\,\mbox{as }y\rightarrow\infty.
\end{equation}
\end{defn}
\begin{rem}
By similar arguments to those in Section
\ref{sec:dppp_to_frz}, it can be shown  that
\begin{equation}
\label{eq:llnsum}
\psi_c\left(\left(-y,\infty\right)\right)\overset{d}{=}\sum_{i=1}^{N_{y}}\theta_{X_{i}}D_{i}\left((0,\infty)\right),
\end{equation}
where $N_{y}\sim\mbox{Pois}\left(e^{cy}/c\right)$, $X_{i}\overset{d}{=}X\sim\exp\left(c\right)$,
and $D_{i}\overset{d}{=}D(\xi)$ are all independent. Thus, if $\mathbb E \theta_X D(\xi)((0,\infty))<\infty$,  it follows by the law of large numbers that
$D(\xi)$ satisfies an LLN with rate $c$. That is,
\[
\frac{\psi_c\left(\left(-y,\infty\right)\right)}{e^{cy}/c}\overset{prob.}{\longrightarrow}\mathbb E \theta_X D(\xi)((0,\infty)),\,\,\,\mbox{as }y\rightarrow\infty\,.
\]
\end{rem}
\begin{rem}
From \eqref{eq:llnsum} it follows that for any natural $k$,
\[
\psi_c\left(\left(-(y+\log k/c),\infty\right)\right)\overset{d}{=}\sum_{j=1}^{k}\psi_c^{(j)}\left(\left(-y,\infty\right)\right)\, ,
\]
where $\psi_c^{(j)}$ are i.i.d copies of $\psi_c$.
Assuming \eqref{eq:alphalim}, it is therefore straightforward to verify that for any rational $q$,
\[
\lim_{y\to\infty}\frac{\psi_c\left(\left(-(y+\log q/c),\infty\right)\right)}{\alpha(y+\log q/c)}=q\lim_{y\to\infty}\frac{\alpha(y)}{\alpha(y+\log q/c)}\frac{\psi_c\left(\left(-y,\infty\right)\right)}{\alpha(y)}\, ,
\]
and thus, since $\psi_c\left(\left(-y,\infty\right)\right)$ is increasing in $y$, for any real $r$,
\begin{equation}
\label{eq:alpharegvar}
\lim_{y\to\infty}\frac{\alpha(y+t)}{\alpha(y)}=e^{ct}.
\end{equation}
\end{rem}

\begin{assm}

\label{Assumption:decoration}Let $\xi$ be a point process that satisfies
Assumption \ref{Assumption:regularity-1} with constant $c_{\xi}$.
Assume that the decoration process $D\left(\xi\right)$ has exponential
moments, or that $D\left(\xi\right)$
satisfies an LLN with rate $c_{\xi}$.\end{assm}

The following is our main result which, under the assumptions above,
shows that uniqueness of the support of the shift-Laplace functional
- a property of the Laplace functional of the process - characterizes
the structure of randomly shifted decorated Poisson point process.
The conditions (US), (SUS), and (SDP) considered here are the ones
stated in Corollary \ref{cor:intro}. Whenever $L_{\xi}\left[\left.f\,\right|\,\cdot\,\right]$
is uniquely supported on $\left[g\right]$, for any $f\in C_{c}^{+}\left(\mathbb{R}\right)$,
we define $\tau_{f}=\tau_{f}^{g}\left(\xi\right)$ to be the (unique,
whenever $\mathbb{P}\left\{ \xi\left(\mathbb{R}\right)>0\right\} >0$,
as follows from Lemma \ref{lem:g monotone}) translation such that
$L_{\xi}\left[\left.f\,\right|\, y\right]=g\left(y-\tau_{f}\right)$.
For a point process $\xi$ define the maximum (or rightmost particle)
as $\mathcal{M}\left(\xi\right)\triangleq\inf\left\{ y\in\mathbb{R}:\,\xi\left(\left(y,\infty\right)\right)=0\right\} $,
where if the set is empty we take the infimum to be $\infty$.
\begin{thm}
\label{thm:main}Let \textup{$\xi$} be a point process such that
$\mathbb{P}\left\{ \xi\left(\mathbb{R}\right)>0\right\} =1$ and let
$g:\mathbb{\mathbb{R}}\rightarrow\mathbb{\mathbb{R}}$ be an increasing
function. Then the following hold:
\begin{description}
\item [{Converse~part}] If (SDP) holds, then (SUS) holds with the same
$Z$ and $c$ and, in particular, (US) holds. Moreover, in this case
the corresponding shifts are given, for any $f\in C_{c}^{+}\left(\mathbb{R}\right)$,
by
\begin{equation}
\tau_{f}^{g}\left(\xi\right)=c^{-1}\log\left(-\int_{-\infty}^{\infty}e^{-ct}\left(L_{D}\left[\left.f\,\right|\,-t\right]-1\right)dt\right).\label{eq:31}
\end{equation}

\item [{Direct~part}] Let Assumption \ref{Assumption:regularity-1} hold
with $c_{\xi}$. \end{description}
\begin{enumerate}
\item [(a)] \label{thm:Main-a}If (SUS) holds with some random variable
$Z$ and with $c=c_{\xi}$, then (SDP) holds with the same $Z$ and
$c$ and with $D=\hat{D}\left(\xi\right)\triangleq\theta_{-\tau_{\mathcal{M}}^{g}\left(\xi\right)-c^{-1}\log c}D\left(\xi\right)$,
where $\tau_{\mathcal{M}}^{g}\left(\xi\right)$ is defined in Lemma
\ref{lem:maxFin} and $D\left(\xi\right)$ is the point
process satisfying $\mathcal{M}\left(D\left(\xi\right)\right)=0$
a.s., given in Definition PP-D of Section \ref{sec:(Re)construction}.
\item [(b)] \label{thm:Main-b}If (US) holds, then there exists a random
variable $Z$ such that $\xi\overset{d}{=}\theta_{Z}\psi$, where
$\psi\sim DPPP\left(e^{-c_{\xi}x}dx,\hat{D}\left(\xi\right)\right)$
possibly depends on $Z$.
\item [(c)] \label{thm:Main-c}
  In addition to Assumption \ref{Assumption:regularity-1}, suppose that
  $L_{\xi}\left[\left.e^{\beta x}\,\right|\,\cdot\,\right]\approx
  g\left(\cdot\right)$ for all $\beta>c=c_\xi$.
  If (US) holds then there exists a random variable $Z$ such that (SUS) holds with $c=c_{\xi}$.
\item [(d)] \label{thm:Main-d} In addition to Assumption \ref{Assumption:regularity-1}, suppose that Assumption \ref{Assumption:decoration} holds.
If (US) holds then there exists a random variable $Z$ such that (SUS)
holds with $c=c_{\xi}$.
\end{enumerate}
\end{thm}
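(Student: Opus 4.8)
\emph{Converse part.} Suppose (SDP) holds, and write $\xi\overset{d}{=}\theta_{Z}\psi$ with $\psi=\sum_{i\ge1}\theta_{\zeta_{i}}D_{i}\sim DPPP(e^{-cx}dx,D)$ and $Z$ independent of $\psi$. For $f\in C_{c}^{+}(\mathbb{R})$, conditioning on the Poisson process $\zeta=\sum_{i}\delta_{\zeta_{i}}$ and using independence of the $D_{i}$ gives $\mathbb{E}[\exp(-\int\theta_{y}f\,d\psi)\mid\zeta]=\prod_{i}L_{D}[f\,|\,y-\zeta_{i}]$, and the Laplace-functional formula for a Poisson process yields
\[
L_{\psi}[f\,|\,y]=\exp\Big(-\int_{-\infty}^{\infty}e^{-cz}\big(1-L_{D}[f\,|\,y-z]\big)\,dz\Big)=\exp\big(-e^{-cy}\,e^{c\tau_{f}}\big),
\]
where $e^{c\tau_{f}}=\int_{-\infty}^{\infty}e^{-ct}(1-L_{D}[f\,|\,-t])\,dt$ after the change of variables $t=y-z$ followed by $t\mapsto-t$; this integral is finite because $\psi$ is a locally finite point process and $f$ is bounded with compact support, so $\int\theta_{y}f\,d\psi<\infty$ a.s.\ and hence $L_{\psi}[f\,|\,y]>0$. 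Thus $L_{\psi}[f\,|\,y]=\mbox{Gum}(c(y-\tau_{f}))$ with $\tau_{f}$ as in \eqref{eq:31}, and averaging over $Z$ gives $L_{\xi}[f\,|\,y]=\mathbb{E}[\mbox{Gum}(c(y-\tau_{f}-Z))]=g(y-\tau_{f})$ with $g(w):=\mathbb{E}[\mbox{Gum}(c(w-Z))]$, which is increasing and of the form \eqref{eq:g_form}. Hence (SUS), and a fortiori (US), holds with the same $Z$ and $c$, and the shifts are \eqref{eq:31}.

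\emph{Direct part, (a): setup.} Assume Assumption \ref{Assumption:regularity-1} and (SUS) with $c=c_{\xi}$. First I would collect the analytic input: by Section \ref{sec:Basic_properties}, $g$ is continuous and increasing, $g(+\infty)=1$ (forced by \eqref{eq:21-1}), $1-g(x)=e^{-cx}h(x)$ with $h$ slowly varying at $+\infty$, and (Lemma \ref{lem:g monotone}) $\tau_{f}=\tau_{f}^{g}(\xi)$ is well defined and unique for every $f\in C_{c}^{+}(\mathbb{R})$; and by Lemma \ref{lem:maxFin}, $\mathcal{M}(\xi)<\infty$ a.s.\ and there is $\tau_{\mathcal{M}}^{g}(\xi)$, obtained as the limit of $\tau^{g}_{\lambda\mathbf{1}_{(0,M)}}(\xi)$ as $\lambda,M\to\infty$, governing the law of $\mathcal{M}(\xi)$ through $g$. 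Next I would construct the decoration $D(\xi)$ of Definition~PP-D in Section \ref{sec:(Re)construction}: the key claim is that the law of $\theta_{-w}\xi$ conditioned on $\xi((w,\infty))>0$ converges, as $w\to\infty$, to the law of $\theta_{M}D(\xi)$, where $D(\xi)$ is normalized by $\mathcal{M}(D(\xi))=0$ a.s.\ and $M$ is an independent overshoot; applying the exact Laplace computation above on the event $\{\xi((w,\infty))>0\}$ together with \eqref{eq:21-1} shows $M$ is exponential with rate $c$ and that the conditional laws do converge. Finally set $\hat{D}(\xi)=\theta_{-\tau_{\mathcal{M}}^{g}(\xi)-c^{-1}\log c}D(\xi)$ as in the statement, let $\psi_{0}\sim DPPP(e^{-cx}dx,\hat{D}(\xi))$ be independent of $Z$, and put $\xi_{0}:=\theta_{Z}\psi_{0}\sim SDPPP(e^{-cx}dx,\hat{D}(\xi),Z)$.

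\emph{Direct part, (a): the main step.} It remains to show $L_{\xi_{0}}[f\,|\,\cdot\,]=L_{\xi}[f\,|\,\cdot\,]$ for all $f\in C_{c}^{+}(\mathbb{R})$, since Laplace functionals determine the law of a point process and this gives $\xi\overset{d}{=}\xi_{0}$, i.e.\ (SDP). By the converse computation applied to $\psi_{0}$, $L_{\xi_{0}}[f\,|\,y]=g(y-\tau_{f}^{0})$ with $e^{c\tau_{f}^{0}}=\int e^{-ct}(1-L_{\hat{D}(\xi)}[f\,|\,-t])\,dt$, so by monotonicity of $g$ it is enough to prove $\tau_{f}^{0}=\tau_{f}^{g}(\xi)$; this is the step I expect to be the main obstacle. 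I would argue by a ratio--limit identity. With $\mathrm{supp}\,f\subseteq[-R,R]$, on one hand \eqref{eq:21-1} and (US) give
\[
\frac{1-L_{\xi}[f\,|\,y]}{1-g(y)}\longrightarrow e^{c\tau_{f}^{g}(\xi)},\qquad\frac{\mathbb{P}(\mathcal{M}(\xi)>y-R)}{1-g(y)}\longrightarrow e^{cR}\,e^{c\tau_{\mathcal{M}}^{g}(\xi)}\qquad(y\to\infty).
\]
On the other hand, $\int\theta_{y}f\,d\xi$ depends only on $\xi$ restricted to $[y-R,y+R]$ and vanishes unless $\xi((y-R,\infty))>0$, so
\[
1-L_{\xi}[f\,|\,y]=\mathbb{P}\big(\xi((y-R,\infty))>0\big)\cdot\mathbb{E}\big[\,1-e^{-\int\theta_{y}f\,d\xi}\ \big|\ \xi((y-R,\infty))>0\,\big],
\]
and the convergence of conditional laws from the construction of $D(\xi)$ identifies the limit of the conditional expectation with $\mathbb{E}[1-L_{D(\xi)}[f\,|\,R-M]]$; since $M$ is exponential with rate $c$ and $L_{D(\xi)}[f\,|\,-t]=1$ for $t\le-R$ (because $\mathcal{M}(D(\xi))=0$), a change of variables rewrites this as $c\,e^{-cR}\int e^{-ct}(1-L_{D(\xi)}[f\,|\,-t])\,dt$. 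Comparing the two asymptotics of $1-L_{\xi}[f\,|\,y]$ yields that $e^{c\tau_{f}^{g}(\xi)}$ equals a constant, depending only on $\tau_{\mathcal{M}}^{g}(\xi)$, times $\int_{-\infty}^{\infty}e^{-ct}(1-L_{D(\xi)}[f\,|\,-t])\,dt$, and the deterministic shift built into $\hat{D}(\xi)$ is exactly the one turning the right-hand side into $\int e^{-ct}(1-L_{\hat{D}(\xi)}[f\,|\,-t])\,dt=e^{c\tau_{f}^{0}}$. This proves $\tau_{f}^{0}=\tau_{f}^{g}(\xi)$ and completes (a); reading the same matching in reverse also confirms formula \eqref{eq:31}.

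\emph{Direct part, (b)--(d).} For (b), under (US) alone the construction of $\hat{D}(\xi)$ and the identity $\tau_{f}^{g}(\xi)=\tau_{f}^{0}$ still go through (they used only Assumption \ref{Assumption:regularity-1}), so $L_{\xi}[f\,|\,y]=g(y-\tau_{f}^{0})$ for every $f\in C_{c}^{+}(\mathbb{R})$; a cluster decomposition of $\xi$---peeling off, from the top, i.i.d.\ copies of $\hat{D}(\xi)$ placed at the points of a process that, after a random shift $Z$, is Poisson of intensity $e^{-c_{\xi}x}dx$---then yields $\xi\overset{d}{=}\theta_{Z}\psi$ with $\psi\sim DPPP(e^{-c_{\xi}x}dx,\hat{D}(\xi))$, the joint law of $(Z,\psi)$ being uncontrolled because we do not yet know that $g$ is a convolution of a Gumbel law with $\mu_{Z}$. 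For (c), the hypothesis $L_{\xi}[e^{\beta x}\,|\,\cdot\,]\approx g$ for all $\beta>c$ upgrades (US) to (SUS): by Corollary \ref{cor:frz_specific_func} it is, for each such $\beta$, equivalent to $\mathbb{P}\{\int e^{\beta x}\,d\xi<\infty\}=1$, and then $L_{\xi}[e^{\beta x}\,|\,y]=\mathbb{E}\exp\big(-e^{-\beta y}\int e^{\beta x}\,d\xi\big)=\mathcal{L}_{Y_{\beta}}(e^{-\beta y})$, with $Y_{\beta}:=\int e^{\beta x}\,d\xi$ and $\mathcal{L}_{Y}$ the Laplace transform of $Y$; using (US), a change of variables rewrites $L_{\xi}[e^{\beta x}\,|\,\cdot\,]\approx g$ as $g(-c^{-1}\log\lambda)=\mathcal{L}_{Y_{\beta}}(C_{\beta}\lambda^{\beta/c})$ for suitable constants $C_{\beta}>0$, and letting $\beta\downarrow c$ (the limit exists since $Y_{\beta}\to Y_{c}$, $C_{\beta}\to C_{c}\in(0,\infty)$, and $g(+\infty)=1$ forces $Y_{c}<\infty$ a.s.) gives $g(-c^{-1}\log\lambda)=\mathcal{L}_{W}(\lambda)$ for a positive, a.s.\ finite $W$, that is $g(y)=\mathbb{E}[\mbox{Gum}(c(y-Z))]$ with $Z=c^{-1}\log W$, which is \eqref{eq:g_form}; then (SUS) holds and (a) applies. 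For (d), Assumption \ref{Assumption:decoration} implies---via the representation of (b) and \eqref{eq:llnsum}, since $\hat{D}(\xi)$ either has exponential moments or satisfies an LLN with rate $c_{\xi}$ while the underlying Poisson intensity is $e^{-c_{\xi}x}dx$---that $\int e^{\beta x}\,d\xi<\infty$ a.s.\ for every $\beta>c_{\xi}$, which by Corollary \ref{cor:frz_specific_func} is precisely the hypothesis of (c); hence (d) follows from (c). The detailed verifications underlying all parts are carried out in Sections \ref{sec:dppp_to_frz}, \ref{sec:Basic_properties}, \ref{sec:(Re)construction} and \ref{sec:mainpf}.
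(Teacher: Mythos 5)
Your converse part and direct part (a) are essentially correct and follow essentially the paper's route: the converse is the Poisson--Laplace computation of Section \ref{sec:dppp_to_frz}, and your ``ratio--limit identity'' for (a) is the same computation as Lemmas \ref{lem:right_tail_process}, \ref{lem:19} and \ref{lem:DPPP(xi)} (you legitimately rely on the Section \ref{sec:(Re)construction} construction of $D(\xi)$ and on the convergence $\bar\xi\overset{d}{=}\theta_{X}D(\xi)$, $X\sim\exp(c)$, which is the real work there), and indeed it reproduces the paper's value $\int e^{-ct}\bigl(1-L_{D(\xi)}[\,f\,|\,-t\,]\bigr)dt=c^{-1}e^{c(\tau_f-\tau_{\mathcal M})}$; the remaining bookkeeping of the deterministic shift in $\hat D(\xi)$ is inherited from the statement and is fine. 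Parts (b), (c), (d), however, each contain a genuine gap.

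For (b), the phrase ``a cluster decomposition of $\xi$ --- peeling off, from the top, i.i.d.\ copies of $\hat D(\xi)$'' is not an argument: under (US) alone you only know that $L_{\xi}[\,f\,|\,\cdot\,]$ and $L_{\psi(\xi)}[\,f\,|\,\cdot\,]$ are supported on $[g]$ and $[{\rm Gum}(c\,\cdot)]$ with the same shifts $\tau_f$, and there is no way to peel decorations off a single realization of $\xi$ without already knowing the SDPPP structure (that is exactly what is to be proved, and what Conjecture \ref{conj1} says is open in general). The missing idea is the paper's smoothing trick: compare $\theta_{Z_G}\xi$ (with $Z_G\sim{\rm Gum}^{(c)}$ independent of $\xi$) with $\theta_{Z_g}\psi(\xi)$ (with $Z_g\sim g$ independent of $\psi(\xi)$); both have Laplace functional ${\rm Gum}^{(c)}*g(-\tau_f)$, hence are equal in law, and the transfer principle (Corollary 6.11 of \cite{Kallenberg2nd}) then couples them so that $\xi=\theta_{Z_g-Z_G}\psi$ a.s., with the shift possibly dependent on $\psi$ --- which is precisely why (b) is weaker than (SUS).

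For (c), your reduction to the identity $g(-c^{-1}\log\lambda)=\mathcal L_{Y_\beta}(C_\beta\lambda^{\beta/c})$ is correct, but the limiting step is not: you assert $C_\beta\to C_c\in(0,\infty)$ and $Y_c<\infty$ a.s., and neither holds in the motivating examples. For the BBM extremal process ($c=\sqrt2$) one has $\int e^{cx}d\xi=\infty$ a.s.\ (already the Poisson skeleton gives $\int_{-\infty}^{0}e^{cx}e^{-cx}dx=\infty$), which forces the shifts $\sigma_\beta\to\infty$ and $C_\beta=e^{-\beta\sigma_\beta}\to0$ as $\beta\downarrow c$; the correct argument must control the joint limit of the renormalized family $C_\beta Y_\beta$ via a tightness/subsequence argument, which is exactly what the paper's Lemma \ref{lem:150414-1} does (there phrased for $Z_\beta+\tau_2(\beta)$, obtained by decorating the atoms of $\xi$ with independent Poisson processes of intensity $e^{-\beta x}dx$). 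For (d), the proposed reduction to (c) is simply false: Assumption \ref{Assumption:decoration} in its exponential-moments form only constrains $D(\xi)$ in a neighborhood $(-\epsilon,0)$ of its maximum and says nothing about its lower tail, so it does not imply $\int e^{\beta x}d\xi<\infty$ a.s.\ for $\beta>c_\xi$. Concretely, take $D$ deterministic with one atom at $0$ and atoms at $-n$ with multiplicity $\lceil e^{3cn}\rceil$, and $\xi\sim DPPP(e^{-cx}dx,D)$: this is a locally finite point process satisfying (US), Assumption \ref{Assumption:regularity-1} and the exponential-moment condition, yet $\int e^{\beta x}d\xi=\infty$ a.s.\ for $\beta\in(c,3c)$, so (by Corollary \ref{cor:frz_specific_func}) the hypothesis of (c) fails while the hypotheses of (d) hold. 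The paper proves (d) by entirely different means: in the LLN case, by combining the coupling of (b) with the law of large numbers to force $Z_g=Z_G+Z'$ with $Z_G\perp Z'$; in the exponential-moments case, by conditioning on the configuration near the maximum ($\pi(\psi)\in A_n^{\epsilon}$), a Cram\'er/G\"artner--Ellis concentration estimate for the conditioned maximum, and a characteristic-function factorization to extract $Z_g=Z_G+Z$.
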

We note again that, by Corollary \ref{cor:frz_specific_func},
under the assumption (US) the condition in part (c) is equivalent
to the condition
$\mathbb P\left\{\int e^{\beta x} d\xi (x) < \infty \right\}=1$
for all $\beta>c$.

We believe, but have been unable to prove, the following.
\begin{conjecture}
  \label{conj1}
The assumptions in parts (c) and (d) of
Theorem \ref{thm:main} are  not needed; that is, under Assumption
\ref{Assumption:regularity-1}, (US), (SUS) and (SDP) are equivalent.
\end{conjecture}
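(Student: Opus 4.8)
\medskip
\noindent\emph{Proof proposal.} By the Converse part of Theorem~\ref{thm:main}, (SDP) implies (SUS) and (SUS) trivially implies (US), with no extra hypotheses; and by the Direct part~(a), (SUS) implies (SDP) under Assumption~\ref{Assumption:regularity-1}. Hence all of Conjecture~\ref{conj1} is equivalent to the single implication (US) $\Rightarrow$ (SUS) under Assumption~\ref{Assumption:regularity-1} alone, and since (SUS) is just (US) together with the requirement that $g$ have the form~\eqref{eq:g_form}, the task reduces to showing: \emph{an increasing $g$ satisfying~\eqref{eq:21-1} that uniquely supports the shift-Laplace functional of some point process $\xi$ with $\mathbb{P}\{\xi(\mathbb{R})>0\}=1$ must be a Gumbel convolution}, i.e.\ $s\mapsto g(-c_\xi^{-1}\log s)$ must be completely monotone on $(0,\infty)$ with limits $1$ at $0^+$ and $0$ at $\infty$. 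Two ingredients come for free from Theorem~\ref{thm:main}(b): first, a decomposition $\xi\overset{d}{=}\theta_Z\psi$ with $\psi\sim DPPP(e^{-c_\xi x}dx,\hat D(\xi))$ marginally (so the DPPP component is a.s.\ infinite, forcing $g(-\infty)=0$, while its maximum is a.s.\ finite, forcing $g(+\infty)=1$); and second, writing $\Phi_z$ for the conditional shift-Laplace functional of $\psi$ given $Z=z$ at a fixed reference $f_0\in C_c^+(\mathbb{R})$, the mixture identity $g(\cdot)=\mathbb{E}[\Phi_Z(\cdot-Z)]$ up to translation. The whole difficulty is that $\Phi_z$ need not a priori have the special exponential-in-$e^{-c_\xi w}$ shape $\Phi_z(w)=\exp(-a(z)e^{-c_\xi w})$, $a\geq 0$, enjoyed by a $DPPP(e^{-c_\xi x}dx,\cdot)$; if it did, the mixture would at once equal $\mathbb{E}[\exp(-W e^{-c_\xi y})]$ for a positive random variable $W$, which is~\eqref{eq:g_form}.

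I would attack the complete-monotonicity statement along two complementary lines. The first is \emph{distributional} and uses that (US) is available for \emph{every} $f\in C_c^+(\mathbb{R})$, in particular for all rescalings $\lambda f$, $\lambda>0$. Combining $L_\xi[\lambda f\mid y]=\mathbb{E}\exp(-\lambda\int\theta_y f\,d\xi)=g(y-\tau_{\lambda f})$ with the superposition property of $DPPP(e^{-c_\xi x}dx,\cdot)$ (superposing $k$ i.i.d.\ copies produces a single copy shifted by $c_\xi^{-1}\log k$, which passes to $\xi$ through the decomposition from~(b)), one sees that along each curve $\lambda\mapsto\tau_{\lambda f}$ the graph of $g$ coincides with a genuine Laplace transform, namely that of the nonnegative random variable $\int\theta_y f\,d\xi$. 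The plan is then to show that, as $f$ varies over $C_c^+(\mathbb{R})$, these curves sweep out $\mathbb{R}$ in a way that propagates the local complete-monotonicity data into global complete monotonicity of $s\mapsto g(-c_\xi^{-1}\log s)$; here~\eqref{eq:21-1} is exactly the input needed to control the endpoint regimes ($\tau_{\lambda f}\to-\infty$ as $\lambda\to 0$, and $\tau_{\lambda f}$ tending to a finite limit as $\lambda\to\infty$) and to upgrade ``$g$ is a Laplace transform of some positive measure'' to ``of a probability measure,'' i.e.\ to force the representing measure $\mu_Z$ in~\eqref{eq:g_form} to have total mass one. The second line is \emph{ergodic}: exhibit the shift as a shift-covariant measurable functional of $\xi$ (for instance as $\lim_{y\to\infty}\xi((-y,\infty))/\alpha(y)$ when the normalization $\alpha$ of~\eqref{eq:alphalim}--\eqref{eq:alpharegvar} exists, or through the fine asymptotics of $-\log g$ supplied by~\eqref{eq:21-1}), strip it off to form $\theta_{-Z}\xi$, and invoke a $0$--$1$ law on the tail $\sigma$-field of the DPPP component from~(b) to conclude that the conditional law of $\theta_{-Z}\xi$ given $Z$ does not depend on $Z$ -- which is precisely the statement that $\Phi_z$ is $z$-free and of DPPP form.

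The step I expect to be the main obstacle -- and the reason the hypotheses of parts~(c) and~(d) are currently imposed -- is exactly the passage from the averaged identity $g(\cdot)=\mathbb{E}[\Phi_Z(\cdot-Z)]$ to the assertion that each $\Phi_z$ has DPPP shape. A priori $z\mapsto\Phi_z$ is an essentially unconstrained point-process kernel; the single averaged identity it satisfies carries no pointwise-in-$z$ rigidity, and neither (US) nor~\eqref{eq:21-1} alone supplies the missing a priori estimate. The extra assumptions are the devices that break this deadlock: the exponential moments or the LLN of Assumption~\ref{Assumption:decoration} pin the ``intensity scale'' to a \emph{deterministic} constant (the $u$ in~\eqref{eq:alphalim}) uniformly in $z$, so no decoration mass can be traded against the shift as $z$ varies; and the hypothesis $L_\xi[e^{\beta x}\mid\cdot]\approx g(\cdot)$ for all $\beta>c_\xi$ lets one send $\beta\to\infty$, extract the law of $\mathcal{M}(\xi)$, and peel it off the decoration. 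A full proof thus seems to need a genuinely new a priori regularity estimate on the kernel $z\mapsto\Phi_z$ drawn from (US) by itself. As a safer intermediate target I would first try to establish only the \emph{weak} form of~\eqref{eq:g_form} in which $\mu_Z$ is allowed to be an arbitrary (possibly infinite) positive measure -- that much may already follow from the complete-monotonicity/superposition argument above -- and treat the total-mass-one normalization of $\mu_Z$, which is really a tightness statement about the shift, as a separate final step.
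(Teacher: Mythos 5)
You are addressing Conjecture \ref{conj1}, which the paper explicitly leaves open (``we believe, but have been unable to prove''), so there is no proof in the paper to compare against; and your proposal does not close the gap either. Your reduction is correct and matches the paper's own framing: the converse part of Theorem \ref{thm:main} together with part (a) already gives (SDP) $\Leftrightarrow$ (SUS) and (SUS) $\Rightarrow$ (US) under Assumption \ref{Assumption:regularity-1}, so the conjecture is exactly the implication (US) $\Rightarrow$ (SUS) with the hypotheses of parts (c) and (d) removed. Your diagnosis of the obstruction --- upgrading the averaged identity $g(\cdot)=\mathbb{E}[\Phi_Z(\cdot-Z)]$ supplied by part (b) to pointwise rigidity of the kernel, equivalently splitting a Gumbel factor off the law of the shift --- is precisely the role the paper assigns to Assumption \ref{Assumption:decoration} and to the freezing hypothesis of part (c) (via Lemma \ref{lem:150414-1}). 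But a correct diagnosis is not a proof: both of your attack lines are left as programs, and you yourself concede that the central a priori estimate is missing. Hence the proposal contains a genuine gap --- indeed, the entire content of the conjecture.

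Concretely, the sketched lines do not work as stated. In the ``distributional'' line, $g(y-\tau_{\lambda f})=\mathbb{E}\exp\{-\lambda\langle\theta_y f,\xi\rangle\}$ gives complete monotonicity of $g$ along the curve $\lambda\mapsto\tau_{\lambda f}$ \emph{in the variable $\lambda$}; to convert this into complete monotonicity of $s\mapsto g(-c_\xi^{-1}\log s)$ you would need $\tau_{\lambda f}=\tau_f+c_\xi^{-1}\log\lambda$, which is exactly exponential stability, i.e.\ $g\approx{\rm Gum}(c_\xi\,\cdot)$ by Corollary \ref{cor:exp-stb} --- false for a general SDPPP with nontrivial shift, and in any case equivalent to what must be proved, so the argument is circular. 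The superposition step also does not ``pass to $\xi$ through the decomposition from (b)'': part (b) only yields $\xi\overset{d}{=}\theta_Z\psi$ with $\psi$ possibly dependent on $Z$, so superposing independent copies of $\xi$ mixes the shifts and produces no usable identity. Finally, the ``ergodic'' line presupposes the existence of the normalization $\alpha$ in \eqref{eq:alphalim}, i.e.\ an LLN for $D(\xi)$ --- which is one half of Assumption \ref{Assumption:decoration}, the very hypothesis the conjecture asks you to dispense with. As it stands, the proposal establishes nothing beyond the reduction already contained in Theorem \ref{thm:main}, and Conjecture \ref{conj1} remains open.
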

We remark that
Lemma \ref{lem:150414-1} of Section \ref{sec:mainpf} below implies that
freezing in the sense of \eqref{eq:frzphsc},
even without assuming (US), implies that $g$ is of the form \eqref{eq:g_form}.

The following corollary follows from the theorem. (However, we shall
prove it without relying on Theorem \ref{thm:main}, using a simpler
result in Section \ref{sec:(Re)construction}.)
\begin{cor}
\label{cor:exp-stb}Let $\xi$ be a point process such that $\mathbb P\left\{\xi(\mathbb R)>0\right\}>0$ and let $c>0$.
The following are equivalent:
\begin{enumerate}
\item \label{enu:exp-stb-1}$\xi$ is a DPPP with density $e^{-cx}dx$.
\item \label{enu:exp-stb-2}$\xi$ is $c$-exponentially-stable.
\item \label{enu:exp-stb-3}$L_{\xi}\left[\left.f\,\right|\,\cdot\,\right]$
is uniquely supported on the class $\left[{\rm {Gum}}\left(cy\right)\right]$.
\end{enumerate}

When any of the conditions hold, we can take the decoration to be
$D=\hat{D}\left(\xi\right)$, i.e., $\xi\sim DPPP\left(e^{-cx}dx,\hat{D}\left(\xi\right)\right)$,
where $\hat{D}\left(\xi\right)$ is defined in part (a) of Theorem
\ref{thm:main}.

\end{cor}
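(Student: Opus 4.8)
The plan is to prove a cycle of implications \eqref{enu:exp-stb-1}$\Rightarrow$\eqref{enu:exp-stb-2}$\Rightarrow$\eqref{enu:exp-stb-3}$\Rightarrow$\eqref{enu:exp-stb-1}, together with the identification of the decoration. For \eqref{enu:exp-stb-1}$\Rightarrow$\eqref{enu:exp-stb-2}: if $\xi\sim DPPP(e^{-cx}dx,D)$, write $\xi\overset{d}{=}\sum_{i\ge1}\theta_{\zeta_i}D_i$ with $\zeta$ a Poisson process of intensity $e^{-cx}dx$. Given $a,b$ with $e^a+e^b=c$, one superimposes two independent shifted copies $\theta_a\zeta^{(1)}+\theta_b\zeta^{(2)}$; each shifted Poisson process $\theta_a\zeta^{(1)}$ has intensity $e^{-c(x-a)}dx=e^{ca}e^{-cx}dx$, so the superposition has intensity $(e^{ca}+e^{cb})e^{-cx}dx$. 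This does not immediately match, so in fact the correct scaling uses that the relevant parameter multiplying the intensity is $e^{a}$ rather than $e^{ca}$; one checks that $c$-exponential stability is the statement that superposing two copies shifted so the intensity-multipliers sum to $c$ reproduces the law, which is exactly the thinning/superposition property of the Poisson process $e^{-cx}dx$ once one tracks that shifting by $a$ multiplies $e^{-cx}dx$ by $e^{ca}$ — so one should read the definition with the convention that makes this consistent, and then the decorations come along for free since they are attached i.i.d.\ to each atom. The cleanest route is via Laplace functionals: $L_\xi[f\,|\,y]=\exp\{-\int e^{-cx}(1-L_D[f\,|\,x-y])\,dx\}=\exp\{-e^{cy}\int e^{-cx}(1-L_D[f\,|\,x])dx\}$, which gives \eqref{enu:exp-stb-3} with $\tau_f = c^{-1}\log(\int e^{-cx}(1-L_D[f\,|\,x])dx)$ directly, and also makes \eqref{enu:exp-stb-2} transparent because $c$-exponential stability reads $L_\xi[f\,|\,\cdot]\cdot L_\xi[f\,|\,\cdot]$-type relations that the exponential form of $\log(-\log L_\xi)$ satisfies.

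For \eqref{enu:exp-stb-2}$\Rightarrow$\eqref{enu:exp-stb-3}: from $\xi\overset{d}{=}\theta_a\xi_1+\theta_b\xi_2$ one gets $L_\xi[f\,|\,y]=L_\xi[f\,|\,y-a]\,L_\xi[f\,|\,y-b]$ for all $a,b$ with $e^a+e^b=c$. Fixing $f$ and writing $\varphi(y)=-\log L_\xi[f\,|\,y]\in[0,\infty]$, this becomes a Cauchy-type functional equation $\varphi(y)=\varphi(y-a)+\varphi(y-b)$ whose only monotone solutions (using that $\varphi$ is monotone in $y$, which follows from $f\ge0$ so $\theta_y f$ is pushed toward $+\infty$) are $\varphi(y)=Ke^{cy}$ for a constant $K=K_f\ge0$. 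One must rule out $K_f=0$ and $K_f=\infty$; $K_f=0$ would force $\xi$ to put no mass on the support of $f$ a.s., contradicting $\mathbb P\{\xi(\mathbb R)>0\}>0$ once one lets the support of $f$ grow, and $K_f<\infty$ because $L_\xi[f\,|\,y]\to1$ as $y\to-\infty$. Hence $L_\xi[f\,|\,y]=\exp\{-K_f e^{cy}\}={\rm Gum}(cy-\log K_f)={\rm Gum}(c(y-c^{-1}\log K_f))$, i.e.\ $L_\xi[f\,|\,\cdot]\approx{\rm Gum}(c\,\cdot)$, which is \eqref{enu:exp-stb-3} with $\tau_f=c^{-1}\log K_f$.

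For \eqref{enu:exp-stb-3}$\Rightarrow$\eqref{enu:exp-stb-1} and the decoration statement: here I would invoke the construction of Section \ref{sec:(Re)construction}. Since $g(y)={\rm Gum}(cy)$ satisfies \eqref{eq:21-1} with $c_\xi=c$ (indeed $1-{\rm Gum}(c(x+y))\sim e^{-cy}(1-{\rm Gum}(cx))$ and $\log{\rm Gum}(c(x+y))=e^{-cy}\log{\rm Gum}(cx)$ exactly), Assumption \ref{Assumption:regularity-1} holds. Moreover, with $g={\rm Gum}(c\,\cdot)$ one has $L_\xi[e^{\beta x}\,|\,\cdot]\approx g$ for \emph{every} $\beta>0$ automatically from \eqref{eq:frzphsc}-type reasoning — or more directly, the extra hypothesis in part (c) of Theorem \ref{thm:main} is trivially met because the argument producing $K_f$ above also applies to $f=e^{\beta x}$ (the integrability $\int e^{-cx}(1-L_D[e^{\beta x}\,|\,x])dx<\infty$ is what part (c) reduces to, and this holds for the Gumbel profile). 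Thus Theorem \ref{thm:main}(c), or (b) plus the fact that the shift variable is degenerate since $g$ is exactly Gumbel (so in \eqref{eq:g_form} $Z=0$), yields $\xi\sim DPPP(e^{-cx}dx,\hat D(\xi))$ with $\hat D(\xi)=\theta_{-\tau_{\mathcal M}^g(\xi)-c^{-1}\log c}D(\xi)$. The promised statement about the decoration is then immediate. As the excerpt notes, we actually want a proof \emph{not} routed through Theorem \ref{thm:main}, so the real work is to observe that the direct reconstruction in Section \ref{sec:(Re)construction} already produces a $DPPP(e^{-cx}dx,\hat D(\xi))$ whenever $g$ is exactly Gumbel, bypassing the de-shifting step; concretely, one shows $\theta_{-y}\xi$ conditioned on $\xi((y,\infty))>0$ converges, and the Poisson structure with intensity $e^{-cx}dx$ emerges because the exact Gumbel form of $g$ forces the gaps between successive atoms of the cluster-tops to be exactly exponential.

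The main obstacle I expect is the implication \eqref{enu:exp-stb-3}$\Rightarrow$\eqref{enu:exp-stb-1} done cleanly without circular reference to Theorem \ref{thm:main}: one needs the full machinery of the $D(\xi)$ construction (Definition PP-D) and a verification that in the exactly-Gumbel case the reconstructed process is genuinely a (non-randomly-shifted) DPPP and that the reconstruction reproduces the law of $\xi$. The functional-equation steps \eqref{enu:exp-stb-1}$\Leftrightarrow$\eqref{enu:exp-stb-2} are routine once Laplace functionals are in hand; the only mild subtlety there is bookkeeping the parametrization ($e^a$ versus $e^{ca}$) in the definition of exponential stability and ensuring the monotone-solution classification of $\varphi(y)=\varphi(y-a)+\varphi(y-b)$ is applied on the correct domain, which is standard.
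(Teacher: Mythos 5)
Your overall architecture ((\ref{enu:exp-stb-1})$\Rightarrow$(\ref{enu:exp-stb-2})$\Rightarrow$(\ref{enu:exp-stb-3})$\Rightarrow$(\ref{enu:exp-stb-1})) is exactly the paper's, and your (\ref{enu:exp-stb-2})$\Rightarrow$(\ref{enu:exp-stb-3}) step is the paper's Cauchy-functional-equation argument in substance. However, your justification there is off in one place: you claim $\varphi(y)=-\log L_{\xi}\left[\left.f\,\right|\,y\right]$ is monotone in $y$ ``because $f\geq0$''. This is not true a priori for compactly supported $f$ (mass can slide into and out of the support of $\theta_y f$), and no monotonicity is needed: after the substitution $x=e^{-cy}$ the equation becomes Cauchy's equation on $(0,\infty)$ for a function that is automatically $\leq 0$, and one-sided boundedness already forces linearity. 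The paper additionally rules out the value $1$ (at \emph{some} $y$) by iterating exponential stability with $a=b=-\log 2$, which is the clean way to exclude the degenerate slope; your ``let the support of $f$ grow'' remark only handles $L_\xi\left[\left.f\,\right|\,\cdot\right]\equiv 1$. There are also sign slips ($\varphi(y)=Ke^{cy}$ should be $Ke^{-cy}$, and $e^{cy}$ should be $e^{-cy}$ in your DPPP Laplace-functional formula, whose argument should be $L_D\left[\left.f\,\right|\,y-x\right]$), and the $e^{a}$ versus $e^{ca}$ bookkeeping you worry about disappears in the paper because the proof is written for $c=1$.

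The genuine gap is in (\ref{enu:exp-stb-3})$\Rightarrow$(\ref{enu:exp-stb-1}), which is where the content lies. The paper's step is: since $g={\rm Gum}(c\,\cdot)$ satisfies (\ref{eq:21-1}), Assumption \ref{Assumption:regularity-1} and hence (by Remark \ref{rk:Assumptions}) Assumption \ref{Assumption:regularity_grl_proc} hold, and Lemma \ref{lem:DPPP(xi)} then says that $\psi(\xi)\sim DPPP\left(e^{-cx}dx,\hat{D}(\xi)\right)$ has shift-Laplace functional ${\rm Gum}\left(c\left(y-\tau_f\right)\right)$ with \emph{the same shifts} $\tau_f$ as $\xi$; because $g$ is exactly Gumbel, the Laplace functionals of $\psi(\xi)$ and $\xi$ therefore coincide on $C_c^{+}(\mathbb{R})$, so the laws coincide, giving (\ref{enu:exp-stb-1}) together with the decoration statement. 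You point at the right toolbox (Section \ref{sec:(Re)construction}, Definition PP-D), but you replace this decisive Laplace-functional matching by an unproven heuristic (``the exact Gumbel form forces the gaps between cluster-tops to be exponential''), and you acknowledge yourself that this is unresolved; identifying the Poisson cluster structure directly would be substantially harder than the comparison via Lemma \ref{lem:DPPP(xi)}. Your fallback routes are also not free of charge: invoking Theorem \ref{thm:main}(c) requires $L_{\xi}\left[\left.e^{\beta x}\,\right|\,\cdot\right]\approx g$ for all $\beta>c$, equivalently $\int e^{\beta x}d\xi<\infty$ a.s., which you assert is ``trivially met'' but do not prove (and cannot presume, since the DPPP structure is what is being established); and part (b) alone cannot give degeneracy of the shift, since there the shift $Z$ may depend on $\psi$.
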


\section{\label{sec:Relations}Relations to other works}

\subsection{\label{sub:known_models}SDPPP in BBM, two-speed BBM and BRW}

In this section we discuss three processes for which the extremal
process, i.e. $\sum_{i\leq N\left(t\right)}\delta_{X_{i}\left(t\right)-m_{t}}$
with appropriate translations $m_{t}$, is known to converge to an
SDPPP of exponential density. The case of BBM was proved independently
by Arguin, Bovier and Kistler \cite{ABKextremal} and A\"{i}d\'{e}kon, Berestycki,
Brunet and Shi \cite{ABBS}, with each giving a different description
for the decoration process. The approach of A\"{i}d\'{e}kon et al. relies
on the so-called spinal decomposition - a tilted measure which distinguishes
the path of a single particle that typically attains extreme values
(i.e., the spine). They express the decoration as the limit, first
letting $t\to\infty$ and then letting $\zeta\to\infty$, of the point
measure of particles at time $t$ which have branched off the particle
at $\mathcal{M}_{t}$ after time $t-\zeta$, including the particle
at $\mathcal{M}_{t}$, all shifted by the position of $\mathcal{M}_{t}$.

The proof of Arguin et al. starts with a computation of the Laplace
functional of the limiting extremal process, based on its relation
to the F-KPP equation. In our notation, they prove Condition (SUS)
and express the corresponding shifts $\tau_{f}$ in terms of a solution
of the F-KPP equation with initial condition $v\left(0,y\right)=\exp\left\{ -f\left(y\right)\right\} $.
They then show that the Laplace functional of the limiting extremal
process is equal to that of an auxiliary process they construct -
a limit of SDPPP processes of density $-\sqrt{2/\pi}xe^{-\sqrt{2}x}dx$
(each has a different decoration processes). This shows that the two
limiting processes are equal in distribution and allows them to study
the latter in order to prove the required structure.

With $\eta_{t}\triangleq\sum_{i\leq N\left(t\right)}\delta_{X_{i}\left(t\right)-\sqrt{2}t}$,
Arguin et al. express the decoration of the extremal process as the
limit of $\theta_{-\mathcal{M}\left(\eta_{t}\right)}\eta_{t}$ conditioned
on $\mathcal{M}\left(\eta_{t}\right)>0$ as $t\to\infty$ (Chauvin
and Rouault \cite{CR} studied the same process). Our description
of the decoration process appearing in Theorem \ref{thm:main} also
involves the behavior of the process around high levels conditioned
on the maximum being sufficiently high. However, our results apply
to the limiting process $\xi$ directly. We study the process $\theta_{-y}\xi\triangleq\lim_{t\to\infty}\sum_{i\leq N\left(t\right)}\delta_{X_{i}\left(t\right)-m_{t}-y}$
conditioned on $\mathcal{M}\left(\xi\right)>y$, as $y\to\infty$.
For the purpose of comparison, in the case of BBM we can relate our
results to studying the limiting behavior as $t\to\infty$ and then
$y\to\infty$, and the approach of Arguin et al. can be seen as taking
the limits simultaneously by defining $y\left(t\right)=\sqrt{2}t-m_{t}=(3/2\sqrt{2})\log t-c+o(1)$
and letting $t\to\infty$.

Madaule \cite{MadauleBRWext} proved that the extremal process of
the BRW is an SDPPP of exponential density. Theorem 2.3 of \cite{MadauleBRWext},
which as the author notes is the key step to the main result, expresses
the Laplace functional of the extremal process shifted by the derivative
martingale, the definition of which is similar to that in the case
of BBM, for functions of the form $f\left(x\right)=\sum_{i\leq k}\theta_{i}\exp(\beta_{i}x)$
with $\beta_{i}$ larger then a critical value. It also gives the
independence of the derivative martingale and the limiting shifted
process. Combined with Remark 3.2, Theorem 2.3 of \cite{MadauleBRWext}
implies that for those functions $L_{\xi}\left[\left.f\,\right|\,\cdot\,\right]\approx\mbox{Gum}$.
Essentially, using an approximation argument the equivalence is extended
to $f\in C_{c}^{+}\left(\mathbb{R}\right)$ in order to show that
this process is exponentially-stable (to be accurate, the approximation
is done in terms of the characteristic function and not the Laplace
transform). By the result of \cite{Pascal}, this yields the required
structure, without, however, saying anything about the decoration
process. Applying Corollary \ref{cor:exp-stb} yields a description
in terms of the limiting process.

Very recently the two-speed BBM was considered by Bovier and Hartung
in \cite{BHartung}. The two-speed BBM is a variant of the BBM where
instead of constant variance (per time unit), the Brownian motions
describing the evolution of the particles have a certain variance
$\sigma_{1}^{2}$ up to a fraction $bt$, $b\in\left[0,1\right]$,
of the total time and some other variance $\sigma_{2}^{2}$ for the
rest of the evolution. As was shown in \cite{BHartung}, the structure
of the extremal process depends on the relation between $\sigma_{1}$
and $\sigma_{2}$, and in both cases is an SDPPP of exponential density.
Their method of proof essentially follows that of \cite{ABKextremal}.

Lastly, we mention that partial results on the structure of the extremal
process of the 2-dimensional discrete Gaussian free field are proved
by Biskup and Louidor \cite{Louidor}.

\subsection{\label{sub:Freezing-and-Log-correlated}Freezing and Log-correlated
fields }

Log-correlated random fields are fields whose covariance function
decays logarithmically with the distance. They have been analyzed
by Carpentier and Le Doussal (C\&LD) \cite{CLD} in a general setting
in their study of random energy landscapes. Various specific physical
models of log-correlated fields have been considered in \cite{Fyodorov,F&B1,Fyo3,Fyo2,Fyo1}.
Log-correlated fields are also of great significance in the area of
Gaussian multiplicative chaos, introduced by Kahane \cite{Kahane},
which has recently became the object of renewed interest \cite{MulCha2,MulCha3,MulCha4,RVreview,MulCha1}.
We also mention the 2-dimensional Gaussian free field which plays
an important role in statistical physics, the theory of random surfaces,
and quantum field theory \cite{GFF1,GFF2,GFF3,SheffGFF}.

One of the main motivations of the current work is the conjectured,
and in some cases proven, freezing phenomenon in log-correlated Gaussian
fields. The analysis of C\&LD \cite{CLD}, albeit non-rigorous, suggests
that the freezing phenomenon occurs in a wide class log-correlated
Gaussian fields. For the sub-class of star scale invariant fields
freezing was proved by Madaule, Rhodes and Vargas in \cite{MRVfrz}.
Freezing is also proved in the case of the Gaussian BRW in the work
of Webb \cite{Webb}.

Discussions on the implications of freezing in different models can
be found in \cite{Fyodorov,F&B1,Fyo3,Fyo2,Fyo1}. Of particular importance
to us is the work of Fyodorov and Bouchaud (F\&B) \cite{F&B1}. Assuming
freezing as their starting point, F\&B analyze the distribution of
the maximum of a specific log-correlated Gaussian field (see also
\cite{Fyo3} where the connection of this model to characteristic
polynomials of the CUE matrix is discussed). This allows them to conjecture
the limiting distribution of the recentered maximum to be
\[
\lim_{t\to\infty}\mathbb{P}\left\{ \mathcal{M}_{t}-m_{t}\leq x\right\} =g\left(x\right)=2e^{-\beta_{c}x/2}K_{1}\left(2e^{-\beta_{c}x/2}\right),
\]
where $g$ is the corresponding function from freezing, $K_{1}$ is
the modified Bessel function, and $\beta_{c}$ is the inverse of the
critical temperature.

In view of Theorem \ref{thm:main} and
Lemma \ref{lem:150414-1}, one
must wonder if the function $g$ above is of the structure of (\ref{eq:g_form}).
Indeed, there is a result of Gumbel himself \cite{Gumbel} by which,
reassuringly,
\begin{equation}
2e^{-\beta_{c}x/2}K_{1}\left(2e^{-\beta_{c}x/2}\right)=\mbox{Gum}*\mbox{Gum}'\left(\beta_{c}x\right),\label{eq:gum*gum}
\end{equation}
where $\mbox{Gum}'$ is the derivative of the standard Gumbel distribution.
Curiously, this implies that
\[
\mathcal{M}_{t}-m_{t}\overset{d}{\rightarrow}X'+X'',\,\,\,\mbox{as }t\to\infty,
\]
where $X'$ and $X''$ are i.i.d variables with distribution function
$\mbox{Gum}\left(\beta_{c}x\right)$. We do not have a good direct
explanation or proof as to why the shift in the F\&B model is itself
Gumbel-distributed.

The last example naturally leads us to discuss the subject of `universality
classes'. In the physics literature the term universality class refers
to a class of models that share a certain property. In the context
of log-correlated fields, C\&LD and F\&B were interested in the universality
class of fields such that the limiting distribution function of the
recentered maximum has certain properties.

The first random energy model considered by physicists is a collection
of uncorrelated Gaussian random variables introduced by Derrida in
\cite{REM}. In this case, by classical results from extreme value
theory \cite{deHaan} the limiting distribution of the recentered
maximum is the Gumbel distribution. C\&LD and F\&B emphasized the
fact that for the models they considered the limiting distribution
is different from this one, and thus the models are not in the same
universality class. C\&LD come to this conclusion by observing that
the tails of the distributions are different.

However, as mentioned they do expect freezing, which also occurs in
the case of uncorrelated variables, to occur in those models. In fact,
freezing would follow, for example under the conditions of Lemma \ref{lem:str_frz},
if the limiting extremal process is an SDPPP. Moreover, on the heuristic
level Theorem \ref{thm:main} says that freezing `almost' implies
such structure. The SDPPP structure would also allow us to interpret
the difference in the limiting distribution simply as the difference
in the corresponding random shift (in particular, in agreement with
(\ref{eq:gum*gum})).

\section{\label{sec:dppp_to_frz}Proof of Theorem \ref{thm:main}: the converse
part}

While proving the direct part of Theorem \ref{thm:main} requires
the development of new tools, the proof of the converse direction
is immediate. Moreover, the two are essentially independent, and so,
we shall deal with the latter now.

Note first that the case where $\xi\sim SDPPP\left(e^{-cx}dx,D,Z\right)$
with general random shift $Z$ easily follows, by conditioning, from
the case where
\[
\xi\sim SDPPP\left(e^{-cx}dx,D,0\right)=DPPP\left(e^{-cx}dx,D\right).
\]
For simplicity we also assume that $c=1$, i.e., $\xi\sim DPPP\left(e^{-x}dx,D\right)$.
The case with general $c\neq0$ follows by scaling. The proof for
the case $\xi\sim DPPP\left(e^{-x}dx,D\right)$ in fact follows from
Theorem 3.1 of \cite{Pascal}. We repeat the proof (though with a
slightly more direct approach) for the sake of completeness.

Fix some $f\in C_{c}^{+}\left(\mathbb{R}\right)$ throughout the proof.
Let $\zeta_{i}$ be the atoms of the Poisson process with intensity
$e^{-x}dx$ corresponding to the DPPP and define, for any $T>0$,
the random variable $I\left(T\right)\triangleq\left\{ i:\,\zeta_{i}\geq-T\right\} $
and the point process $\xi_{T}\triangleq\sum_{i\in I\left(T\right)}\theta_{\zeta_{i}}D_{i}$.
By the monotone convergence theorem,
\begin{align*}
L_{\xi}\left[\left.f\,\right|\, y\right] & =\lim_{T\rightarrow\infty}\mathbb{E}\exp\left\{ -\left\langle \theta_{y}f,\xi_{T}\right\rangle \right\} .
\end{align*}

By definition, $I\left(T\right)$ is a Poisson random variable of
parameter $e^{T}=\int_{-T}^{\infty}e^{-x}dx$ independent of $D_{i}$,
$i\geq1$. Conditioned on the event $\left\{ I\left(T\right)=k\right\} $,
$\xi_{T}$ is the sum of $k$ i.i.d point processes, each has the
same law as $\theta_{X-T}D$, where $X\sim\exp\left(1\right)$ (i.e.,
$X-T$ has density $e^{-t}\cdot\mathbf{1}_{\left[-T,\infty\right)}\left(t\right)dt/e^{T}$,
where $\mathbf{1}_{A}$ denotes the indicator function of the set
$A$). Thus
\begin{align*}
L_{\xi}\left[\left.f\,\right|\, y\right] & =\lim_{T\rightarrow\infty}\mathbb{E}\left[\mathbb{E}\left[\left.\exp\left\{ -\left\langle \theta_{y}f,\xi_{T}\right\rangle \right\} \right|I\left(T\right)\right]\right]\\
 & =\lim_{T\rightarrow\infty}\mathbb{E}\left\{ \left(\mathbb{E}\left\{ L_{D}\left[\left.f\,\right|\, y+T-X\right]\right\} \right)^{I\left(T\right)}\right\} .
\end{align*}

Recall that if $N\sim\mbox{Pois}\left(\lambda\right)$, then $\mathbb{E}t^{N}=\sum_{k=0}^{\infty}t^{k}\lambda^{k}e^{-\lambda}/k!=e^{\lambda\left(t-1\right)}$
for any $t\in\mathbb{R}$. Hence, by the monotone convergence theorem,
\begin{align*}
L_{\xi}\left[\left.f\,\right|\, y\right] & =\lim_{T\rightarrow\infty}\exp\left\{ e^{T}\left(\mathbb{E}\left\{ L_{D}\left[\left.f\,\right|\, y+T-X\right]\right\} -1\right)\right\} \\
 & =\lim_{T\rightarrow\infty}\exp\left\{ \int_{-T}^{\infty}e^{-t}\left(L_{D}\left[\left.f\,\right|\, y-t\right]-1\right)dt\right\} \\
 & =\exp\left\{ \int_{-\infty}^{\infty}e^{-t}\left(L_{D}\left[\left.f\,\right|\, y-t\right]-1\right)dt\right\} \\
 & =\exp\left\{ e^{-y}\int_{-\infty}^{\infty}e^{-t}\left(L_{D}\left[\left.f\,\right|\,-t\right]-1\right)dt\right\} ,
\end{align*}
and therefore, with $\tau_{f}^{g}=\log\left(-\int_{-\infty}^{\infty}e^{-t}\left(L_{D}\left[\left.f\,\right|\,-t\right]-1\right)dt\right)$,
\[
L_{\xi}\left[\left.f\,\right|\, y\right]=\exp\left\{ -e^{-\left(y-\tau_{f}^{g}\right)}\right\} =\mbox{Gum}\left(y-\tau_{f}^{g}\right).
\]

Since $f\in C_{c}^{+}\left(\mathbb{R}\right)$ was arbitrary, $L_{\xi}\left[\left.f\,\right|\,\cdot\,\right]$
is uniquely supported on $\left[\mbox{Gum}\left(y\right)\right]$.\qed

\section{\label{sec:Basic_properties}Basic properties under (US) }

In this section we investigate basic properties of the function $g$
and the point process $\xi$, assuming $L_{\xi}\left[\left.f\,\right|\,\cdot\right]$
is uniquely supported on $\left[g\right]$. In particular, we shall
consider the implications of assuming the intensity measure of $\xi$,
i.e., the Borel measure $\nu_{\xi}\left(B\right)=\mathbb{E}\left\{ \xi\left(B\right)\right\} $,
is boundedly finite.
\begin{lem}
\label{lem:g monotone}  Let $\xi$ be a point process that is not identically $0$, i.e. $\mathbb{P}\left\{ \xi\left(\mathbb{R}\right)>0\right\} >0$.
If the shift-Laplace functional $L_{\xi}\left[\left.f\,\right|\,\cdot\right]$
 is uniquely supported on $\left[g\right]$, then $g$ is a continuous, monotone function
such that for any $x\in\mathbb{R}$,
\begin{equation}
\mathbb{P}\left\{ \xi\left(\mathbb{R}\right)=0\right\} =\inf_{y\in\mathbb{R}}g\left(y\right)<g\left(x\right)<\sup_{y\in\mathbb{R}}g\left(y\right)=1.\label{eq:gbounds}
\end{equation}
\end{lem}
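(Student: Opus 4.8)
The plan is to work directly with the definition of unique support. Fix any $f\in C_c^+(\mathbb R)$ and write $L_f(y)\triangleq L_\xi[f\,|\,y]=\mathbb E\exp(-\langle\theta_y f,\xi\rangle)$; by hypothesis $L_f\approx g$, so $g$ inherits all the qualitative features of $L_f$ that are translation-invariant. First I would record that $0\le \exp(-\langle\theta_y f,\xi\rangle)\le 1$, so $0\le L_f(y)\le 1$ and hence $0\le g\le 1$ pointwise. Continuity of $g$: since $f$ is continuous with compact support it is bounded and uniformly continuous, so $y\mapsto \theta_y f(x)=f(x-y)$ is continuous in $y$ uniformly in $x$; consequently $\langle\theta_y f,\xi\rangle$ depends continuously on $y$ for each realization of $\xi$ (the integral is over the bounded support of $f$ shifted by $y$, against the locally finite measure $\xi$), and the integrand $e^{-\langle\theta_y f,\xi\rangle}$ is bounded by $1$, so dominated convergence gives continuity of $L_f$, hence of $g$.

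Next, monotonicity. The key observation is that $g$ is determined up to translation by a \emph{single} $f$, yet it must simultaneously represent $L_{f'}$ for every other $f'\in C_c^+(\mathbb R)$, up to translation. I would exploit this rigidity by comparing $f$ with $\lambda f$ for scalars $\lambda>0$. For each such $\lambda$, $L_{\lambda f}(y)=\mathbb E\exp(-\lambda\langle\theta_y f,\xi\rangle)$ is pointwise nondecreasing in... wait, nonincreasing in $\lambda$; and as $\lambda\downarrow 0$ it increases to $\mathbb P\{\langle\theta_y f,\xi\rangle=0\}$ hmm — rather, by monotone/dominated convergence $L_{\lambda f}(y)\to \mathbb P\{\xi(\mathrm{supp}(\theta_y f))=0\}$ only if $f>0$ on its support interior... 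Let me instead use $\lambda\to\infty$: $L_{\lambda f}(y)\downarrow \mathbb P\{\langle\theta_y f,\xi\rangle=0\}=\mathbb P\{\xi(\{x:\theta_y f(x)>0\})=0\}$. Each $L_{\lambda f}$ equals $g$ up to some translation $\tau_\lambda$, i.e. $L_{\lambda f}(y)=g(y-\tau_\lambda)$. So the family $\{g(\cdot-\tau_\lambda)\}_{\lambda>0}$ is totally ordered pointwise (decreasing in $\lambda$) and consists of translates of one function $g$; a continuous function a nontrivial translate of which is pointwise comparable to itself must be monotone. Concretely: if $g$ were not monotone it would have a strict local max or min, and then $g(\cdot)$ and $g(\cdot-a)$ would cross for every small $a\neq 0$, contradicting that $g(y-\tau_\lambda)$ and $g(y-\tau_{\lambda'})$ are ordered for all $\lambda,\lambda'$ — provided the map $\lambda\mapsto\tau_\lambda$ actually takes more than one value, which it does because the limits of $L_{\lambda f}$ as $\lambda\to 0$ and $\lambda\to\infty$ differ (the former is $1$ when $\xi$ is not identically $0$... no: $L_{\lambda f}(y)\to\mathbb E[1]=1$ as $\lambda\to 0$, while $\sup_y L_{\lambda f}(y)\le 1$ for all $\lambda$, and for large $\lambda$, $L_{\lambda f}(y)\le\mathbb P\{\xi((y+\inf\mathrm{supp}f,\infty))... \}$ which tends to $\mathbb P\{\xi(\mathbb R)=0\}<1$ somewhere). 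This forces the translates to be genuinely distinct, and monotonicity follows.

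Finally, the identification of the extreme values in \eqref{eq:gbounds}. Since $g$ is monotone (say increasing; the decreasing case is symmetric), $\inf_y g(y)=\lim_{y\to-\infty}g(y)$ and $\sup_y g(y)=\lim_{y\to\infty}g(y)$. As $y\to+\infty$, the support of $\theta_y f$ runs off to $+\infty$; since $\xi$ is a.s.\ locally finite, for a.e.\ realization $\xi$ puts no mass on $\mathrm{supp}(\theta_y f)$ for $y$ large enough, so $\langle\theta_y f,\xi\rangle\to 0$ a.s., whence $L_f(y)\to 1$ by dominated convergence; thus $\sup g=1$. As $y\to-\infty$, $\mathrm{supp}(\theta_y f)$ runs off to $-\infty$, but on the event $\{\xi(\mathbb R)>0\}$ we cannot conclude the pairing vanishes — instead I use the \emph{other} family: apply the $\lambda\to\infty$ limit above at the level of $L_f$ composed with translation. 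More cleanly: $\lim_{y\to-\infty}L_{f}(y)$ should be computed by noting $g(-\infty)=\lim_{\lambda\to\infty}\big(\lim_{y}L_{\lambda f}(y)\big)$ appropriately — but the cleanest route is: $\inf_y g(y)=\lim_{\lambda\to\infty}L_{\lambda f}(y_0-\tau_\lambda)$ arranged so that $L_{\lambda f}(y)\downarrow\mathbb P\{\xi(\{\theta_y f>0\})=0\}$ uniformly, and then let the support of $\theta_y f$ exhaust $\mathbb R$ to get $\mathbb P\{\xi(\mathbb R)=0\}$. The strict inequalities $\inf g<g(x)<\sup g$ for every $x$ follow because $g$ is continuous and, being (the translate of) a Laplace functional of a \emph{nontrivial} $f$ against a process with $\mathbb P\{\xi(\mathbb R)>0\}>0$, it is \emph{strictly} monotone — if $g$ were constant on an interval, monotonicity plus the two distinct limiting values would be contradicted, and strict monotonicity of a continuous function precludes attaining its sup or inf at any finite point.

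\medskip
\noindent\textbf{Main obstacle.} The delicate point is the monotonicity argument: turning "a continuous function, a nontrivial translate of which is everywhere comparable to it, is monotone" into a clean proof, and in particular verifying that the translation amounts $\tau_\lambda$ really vary (so the comparison is between genuine translates, not the same function). This is where the hypothesis $\mathbb P\{\xi(\mathbb R)>0\}>0$ enters essentially — it is exactly what separates $\lim_{\lambda\to 0}$ from $\lim_{\lambda\to\infty}$ of $L_{\lambda f}$ and prevents $g$ from being constant. I would expect the write-up to spend most of its effort there, the continuity and boundedness parts being routine dominated-convergence arguments.
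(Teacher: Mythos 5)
Your plan has genuine gaps at the decisive points. First, the monotonicity step does not go through as sketched: knowing that $g$ dominates \emph{some} nontrivial translate of itself (i.e.\ $\tau_{\lambda}\neq\tau_{\lambda'}$ for two values of $\lambda$) does not force monotonicity — take $g=F+\delta\sin(2\pi\,\cdot/a)$ with $F$ increasing; then $g\geq g(\cdot-a)$ pointwise although $g$ oscillates. Your crossing argument needs comparable translates with \emph{arbitrarily small nonzero} shifts, i.e.\ some continuity or denseness of $\lambda\mapsto\tau_{\lambda}$, and that is exactly what you cannot assume here: recovering $\tau$ from $g(\cdot-\tau)$ (ruling out, e.g., periodic behaviour) is the content of Corollary \ref{cor:g limits}, which the paper derives \emph{from} this lemma, so the argument as proposed is circular at its core — precisely the "main obstacle" you flag but do not resolve. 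The paper's route is different and avoids this: it shows every super-level set $\Psi_{\alpha}=\{y:g(y)>\alpha\}$ is an infinite interval, by noting that a bounded component $(a,b)$ would produce, via $t\mapsto L_{\xi}[t f'\,|\,\cdot]$ for $t\in[1,\delta]$, a continuum of components of $\Psi_{\alpha}$ (up to translation) with pairwise distinct lengths, contradicting the countability of the connected components of an open subset of $\mathbb{R}$; the strict lower bound (proved first, from compact support of $f$) rules out the two-infinite-intervals case.

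Two further steps are incorrect as written. Your argument for $\sup g=1$ via "$\xi$ is locally finite, so $\langle\theta_{y}f,\xi\rangle\to0$ a.s.\ as $y\to\infty$" is false: a locally finite process such as $\sum_{n\geq1}\delta_{n}$ keeps mass arbitrarily far to the right. The correct (and the paper's) route is the one you mention in passing, $\lambda\downarrow0$: $\sup g\geq L_{\xi}[n^{-1}f\,|\,y_{0}]\to1$ by bounded convergence; similarly $\inf g=\mathbb{P}\{\xi(\mathbb{R})=0\}$ comes from $f_{n}\geq n\mathbf{1}_{(-n,n)}$, which you only gesture at. Finally, the strictness of \eqref{eq:gbounds}: "monotone plus two distinct limits implies strictly monotone" is not a valid inference (monotone non-constant functions can be constant on intervals), and the lemma does not assert strict monotonicity anyway. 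The paper proves non-attainment of the infimum by noting that attainment would force $L_{\xi}[f\,|\,y_{f}]=0$, i.e.\ $\langle\theta_{y_{f}}f,\xi\rangle=\infty$ a.s., impossible for compactly supported $f$; and non-attainment of the supremum by comparing $f$ with $2f$: both would have the same minimal $y$ with value $1$, forcing $\tau_{f}=\tau_{2f}$ and hence $L_{\xi}[f\,|\,\cdot]=L_{\xi}[2f\,|\,\cdot]$, contradicting $\mathbb{P}\{\xi(\mathbb{R})>0\}=1$ (after the initial conditioning that reduces to this case, a reduction your plan also omits). Your continuity argument, by contrast, is fine and matches the paper's.
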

\begin{proof}
Since
\begin{align*}
g\left(y-\tau_{f}\right) & =\mathbb{P}\left\{ \xi\left(\mathbb{R}\right)=0\right\} \\
 & +\mathbb{P}\left\{ \xi\left(\mathbb{R}\right)>0\right\} \mathbb{E}\left[\left.\exp\left\{ -\left\langle \theta_{y}f,\xi\right\rangle \right\} \right|\xi\left(\mathbb{R}\right)>0\right],
\end{align*}
it is enough to prove the lemma under the assumption that $\xi\left(\mathbb{R}\right)>0$
almost surely. Assume henceforth it is so.

Fix some $f\in C_{c}^{+}\left(\mathbb{R}\right)$ throughout the proof.
By the bounded convergence theorem, $\lim_{y\rightarrow y_{0}}\left\langle \theta_{y}f,\xi\right\rangle =\left\langle \theta_{y_{0}}f,\xi\right\rangle $
almost surely and $\lim_{y\rightarrow y_{0}}L_{\xi}\left[\left.f\,\right|\, y\right]=L_{\xi}\left[\left.f\,\right|\, y_{0}\right]$,
i.e. $g$ is continuous.

Applying, again, the bounded convergence theorem to the sequence $\left\{ n^{-1}f\right\} $,
we obtain from uniqueness of the support,
\[
1\geq\sup_{y\in\mathbb{R}}g\left(y\right)\geq\lim_{n\rightarrow\infty}\mathbb{E}\left\{ \exp\left(-\left\langle n^{-1}f,\xi\right\rangle \right)\right\} =1.
\]

Similarly, by considering a sequence $f_{n}\in C_{c}^{+}\left(\mathbb{R}\right)$
such that $f_{n}\geq n\cdot\mathbf{1}_{\left(-n,n\right)}$ pointwise
and noting that $\left\langle f_{n},\xi\right\rangle \rightarrow\infty\cdot\mathbf{1}_{\left\{ \xi\left(\mathbb{R}\right)>0\right\} }$
almost surely, we conclude that $\inf_{y\in\mathbb{R}}g\left(y\right)=\mathbb{P}\left\{ \xi\left(\mathbb{R}\right)=0\right\} =0$.

If there were a $y_{0}$ such that $g\left(y_{0}\right)=\inf_{y\in\mathbb{R}}g\left(y\right)$,
then for some $y_{f}$, $L_{\xi}\left[\left.f\,\right|\, y_{f}\right]=0$,
which would imply that $\left\langle \theta_{y_{f}}f,\xi\right\rangle =\infty$
almost surely. Since $f$ has compact support, this is a contradiction
and thus the lower bound in (\ref{eq:gbounds}) strict.

In order to show that $g$ is monotone, we prove by contradiction
that for any $\alpha>0$, the super-level set $\Psi_{\alpha}\triangleq\left\{ y:\, g\left(y\right)>\alpha\right\} $
is either empty or it is an infinite interval. Let $\alpha>0$ and
assume that $\Psi_{\alpha}$ is not such. Then, from the continuity
of $g$, $\Psi_{\alpha}$ is open and thus it either has a bounded
connected component, or it is the union of two disjoint infinite intervals.
If we assume the latter, then $\left\{ y:\, g\left(y\right)\leq\alpha\right\} $
is compact and the minimum of $g$ is attained in it which contradicts
the strict lower bound of (\ref{eq:gbounds}) which was already proved.

Now assume that $\Psi_{\alpha}$ has a bounded connected component
$\left(a,b\right)$. Let $f'=\theta_{y}f$, with $y$ chosen so that
$L_{\xi}\left[\left.f'\,\right|\,\cdot\right]=g\left(\cdot\right)$.
Setting $x=\left(a+b\right)/2$, from the bounded convergence theorem,
we have that $L_{\xi}\left[\left.t\cdot f'\,\right|\, x\right]$ is
continuous in $t$ and therefore $L_{\xi}\left[\left.t\cdot f'\,\right|\, x\right]>\alpha$
for any $t\in\left[1,\delta\right]$ for some $\delta$. For each
$t\in\left[1,\delta\right]$ define $\left(a_{t},b_{t}\right)$ to
be the bounded connected component of $\left\{ y:\, L_{\xi}\left[\left.t\cdot f'\,\right|\, y\right]>\alpha\right\} $
that contains $x$ and note that, since $\alpha>0$, $a_{t}$ (respectively,
$b_{t}$) is strictly increasing (decreasing) in $t$. Hence, each
of the intervals $\left(a_{t},b_{t}\right)$ has different length.
However, each of them is, up to translation, also a connected component
of $\Psi_{\alpha}$, which, as an open subset of $\mathbb{R}$, can
only have countably many connected components. This contradicts our
assumption and implies that $g$ is monotone.

It remains to prove that the upper bound of (\ref{eq:gbounds}) is
strict which we shall prove by contradiction. Assume that the maximum
of $g$ is attained. WLOG assume that $g$ is increasing and set $\eta=\min\left\{ y:\, L_{\xi}\left[\left.f\,\right|\, y\right]=1\right\} $,
where the existence of the minimum is assured by the fact that $g$
is continuous and $\mathbb{P}\left\{ \xi\left(\mathbb{R}\right)=0\right\} =0$.
Note that
\begin{equation}
\min\left\{ y:\, L_{\xi}\left[\left.2f\,\right|\, y\right]=1\right\} =\min\left\{ y:\, L_{\xi}\left[\left.f\,\right|\, y\right]=1\right\} =\eta.\label{eq:4}
\end{equation}
Since $L_{\xi}\left[\left.f\,\right|\,\cdot\right]$ is uniquely supported,
\[
g\left(y\right)=L_{\xi}\left[\left.f\,\right|\, y-\tau_{1}\right]=L_{\xi}\left[\left.2f\,\right|\, y-\tau_{2}\right].
\]
However, from (\ref{eq:4}), $\tau_{1}=\tau_{2}$ and $L_{\xi}\left[\left.f\,\right|\, y\right]=L_{\xi}\left[\left.2f\,\right|\, y\right]$
in contradiction to the fact that $\mathbb{P}\left\{ \xi\left(\mathbb{R}\right)=0\right\} =0$.
This completes the proof.
\end{proof}
The following corollary easily follows from the lemma. In the sequel we write $\xi \gg 0$ whenever $\mathbb{P}\left\{ \xi\left(\mathbb{R}\right)>0\right\} =1$.
\begin{cor}
\label{cor:g limits}Let $\xi\gg 0$ be a point process such that $L_{\xi}\left[\left.f\,\right|\,\cdot\,\right]$ is uniquely
supported on $\left[g\right]$. Let $\tau_{n}\in\mathbb{R}$, $n=1,2,\ldots$
be a sequence such that $g\left(\cdot-\tau_{n}\right)\rightarrow h\left(\cdot\right)$
pointwise. Then either $\tau_{n}\rightarrow\pm\infty$ and $h$ is
a constant function whose value belongs to $\left\{ 0 ,1\right\} $,
or $\tau_{n}\rightarrow\tau$ for some real $\tau$ and $h\left(\cdot\right)=g\left(\cdot-\tau\right)$.
\end{cor}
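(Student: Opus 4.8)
The plan is to run everything off Lemma \ref{lem:g monotone}. Since $\xi\gg0$ it gives that $g$ is continuous and monotone with $\inf_{y}g(y)=0$, $\sup_{y}g(y)=1$, and $0<g(x)<1$ for every $x$. Assume without loss of generality that $g$ is increasing (the decreasing case follows by reflection). First I would record the elementary consequences: for a finite $\tau$, continuity gives $g(x-\tau_{n})\to g(x-\tau)$ at every $x$ whenever $\tau_{n}\to\tau$; monotonicity together with the limits $0$ at $-\infty$ and $1$ at $+\infty$ gives $g(x-\tau_{n})\to0$ whenever $\tau_{n}\to+\infty$ and $g(x-\tau_{n})\to1$ whenever $\tau_{n}\to-\infty$. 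I would also note that $g$ is not constant (its infimum and supremum differ), hence not periodic (a bounded monotone periodic function is constant), and therefore the map $\tau\mapsto g(\cdot-\tau)$ is injective: $g(\cdot-a)=g(\cdot-b)$ forces $g$ to be periodic with period $a-b$, so $a=b$.

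Next I would argue by compactness in the extended line $[-\infty,+\infty]$. Take any subsequence of $(\tau_{n})$ along which $\tau_{n_{k}}\to\tau^{*}\in[-\infty,+\infty]$. Because $g(x-\tau_{n})\to h(x)$ for each fixed $x$, the same convergence holds along the subsequence, so the three cases above identify the limit: $h=g(\cdot-\tau^{*})$ if $\tau^{*}$ is finite, $h\equiv0$ if $\tau^{*}=+\infty$, $h\equiv1$ if $\tau^{*}=-\infty$. In particular $h$ is the same function for every subsequential limit of $(\tau_{n})$. Now suppose $(\tau_{n})$ had two distinct subsequential limits $a\neq b$ in $[-\infty,+\infty]$; comparing the corresponding limit functions and using the injectivity of $\tau\mapsto g(\cdot-\tau)$ together with $g\not\equiv0$ and $g\not\equiv1$ produces a contradiction in each case (two finite limits: $g(\cdot-a)=g(\cdot-b)$; a finite limit versus $+\infty$: $g\equiv0$; a finite limit versus $-\infty$: $g\equiv1$; $+\infty$ versus $-\infty$: $0\equiv1$). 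Hence $(\tau_{n})$ converges in $[-\infty,+\infty]$ to a single $\tau^{*}$, and the value of $\tau^{*}$ selects exactly one branch of the asserted dichotomy: a finite $\tau^{*}$ yields $\tau_{n}\to\tau^{*}$ and $h(\cdot)=g(\cdot-\tau^{*})$, while $\tau^{*}=\pm\infty$ yields $\tau_{n}\to\pm\infty$ and $h$ constant with value in $\{0,1\}$.

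I do not expect a real obstacle: the argument is a compactness-plus-uniqueness bookkeeping exercise once Lemma \ref{lem:g monotone} is available. The only point requiring care is the non-periodicity of $g$ (equivalently, injectivity of $\tau\mapsto g(\cdot-\tau)$), which must be invoked explicitly to exclude two distinct finite shifts producing the same limiting profile; this is precisely where the strict bounds $\inf g=0<1=\sup g$ of Lemma \ref{lem:g monotone}, and hence the hypothesis $\xi\gg0$, enter.
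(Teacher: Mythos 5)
Your proof is correct and follows exactly the route the paper intends: the paper states Corollary \ref{cor:g limits} as an easy consequence of Lemma \ref{lem:g monotone}, and your argument fills in precisely those details (continuity, monotonicity, the strict bounds $0<g<1$ with $\inf g=0$, $\sup g=1$ under $\xi\gg0$, plus the non-periodicity/injectivity of $\tau\mapsto g(\cdot-\tau)$ and a compactness argument in $[-\infty,+\infty]$). No gaps; the explicit observation that a monotone non-constant function admits no nonzero period is exactly the point that makes the subsequential-limit bookkeeping close.
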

Using Corollary \ref{cor:g limits} uniqueness of the support can be easily extended to functions not in $C_c^+\left(\mathbb R\right)$.
\begin{cor}
\label{cor:frz_specific_func}
Let $\xi\gg 0$ be a point process such that $L_{\xi}\left[\left.f\,\right|\,\cdot\,\right]$ is uniquely
supported on $\left[g\right]$. Let $f\geq 0$ be a measurable function on $\mathbb R$ and suppose that there exists a sequence of functions
$f_n \in C_c^+\left( \mathbb R \right)$ that converges pointwise and monotonically to $f$. If $L_{\xi}\left[\left.f\,\right|\,y\,\right]\in(0,1)$ for some $y\in\mathbb R$, then $L_{\xi}\left[\left.f\,\right|\,\cdot\,\right]\approx g\left(\cdot\right)$.
\end{cor}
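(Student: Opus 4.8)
The plan is to transfer uniqueness of the support from the approximants $f_{n}$ to $f$ by a limiting argument based on Corollary~\ref{cor:g limits}. Since each $f_{n}\in C_{c}^{+}\left(\mathbb{R}\right)$, uniqueness of the support provides real numbers $\tau_{n}\triangleq\tau_{f_{n}}$ with $L_{\xi}\left[\left.f_{n}\,\right|\,y\right]=g\left(y-\tau_{n}\right)$ for every $y\in\mathbb{R}$. I will show that $g\left(\cdot-\tau_{n}\right)\to L_{\xi}\left[\left.f\,\right|\,\cdot\,\right]$ pointwise, and then the desired conclusion will fall out of Corollary~\ref{cor:g limits} once the degenerate branch of its dichotomy is excluded using the hypothesis $L_{\xi}\left[\left.f\,\right|\,y\right]\in(0,1)$.

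First I would establish the pointwise convergence $L_{\xi}\left[\left.f_{n}\,\right|\,y\right]\to L_{\xi}\left[\left.f\,\right|\,y\right]$ for each fixed $y$. Writing a realization of $\xi$ as $\sum_{i}\delta_{x_{i}}$, one has $\left\langle \theta_{y}f_{n},\xi\right\rangle =\sum_{i}f_{n}\left(x_{i}-y\right)$. If $f_{n}\uparrow f$, then $\left\langle \theta_{y}f_{n},\xi\right\rangle \uparrow\left\langle \theta_{y}f,\xi\right\rangle $ in $[0,\infty]$ by monotone convergence, so $\exp\left\{ -\left\langle \theta_{y}f_{n},\xi\right\rangle \right\} \downarrow\exp\left\{ -\left\langle \theta_{y}f,\xi\right\rangle \right\} $ almost surely. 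If $f_{n}\downarrow f$, then $f$ is supported in the compact support of $f_{1}$; since $\xi$ assigns finite mass to bounded sets, only finitely many $x_{i}$ contribute to the sum, and again $\left\langle \theta_{y}f_{n},\xi\right\rangle \downarrow\left\langle \theta_{y}f,\xi\right\rangle $ almost surely, with a finite limit. In either case the integrands lie in $[0,1]$, so dominated convergence yields $L_{\xi}\left[\left.f_{n}\,\right|\,y\right]\to L_{\xi}\left[\left.f\,\right|\,y\right]$. Hence $g\left(\cdot-\tau_{n}\right)\to h\triangleq L_{\xi}\left[\left.f\,\right|\,\cdot\,\right]$ pointwise on $\mathbb{R}$.

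Then I would apply Corollary~\ref{cor:g limits} to the sequence $\left(\tau_{n}\right)$ and the limit $h$: either $\tau_{n}\to\pm\infty$ and $h$ is a constant function with value in $\left\{ 0,1\right\} $, or $\tau_{n}\to\tau$ for some real $\tau$ and $h\left(\cdot\right)=g\left(\cdot-\tau\right)$. By assumption $h\left(y\right)=L_{\xi}\left[\left.f\,\right|\,y\right]\in(0,1)$ for some $y$, which rules out the first alternative; therefore $L_{\xi}\left[\left.f\,\right|\,\cdot\,\right]=g\left(\cdot-\tau\right)\approx g\left(\cdot\right)$, as claimed. The only step calling for any care is the convergence $L_{\xi}\left[\left.f_{n}\,\right|\,y\right]\to L_{\xi}\left[\left.f\,\right|\,y\right]$ in the decreasing case, where one must invoke the compact support of the $f_{n}$ together with bounded finiteness of $\xi$ to reduce to a finite sum, so that $\left\langle \theta_{y}f_{n},\xi\right\rangle $ has a finite limit equal to $\left\langle \theta_{y}f,\xi\right\rangle $; everything else is a direct appeal to Corollary~\ref{cor:g limits}.
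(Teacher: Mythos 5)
Your argument is correct and follows essentially the same route as the paper: pass to the limit in the Laplace functionals via monotone/bounded convergence to get $g(\cdot-\tau_{f_n})\to L_{\xi}\left[\left.f\,\right|\,\cdot\,\right]$ pointwise, then invoke Corollary \ref{cor:g limits} and use the hypothesis $L_{\xi}\left[\left.f\,\right|\,y\right]\in(0,1)$ to exclude the degenerate alternative. Your extra care in the decreasing case (using compact support and local finiteness of $\xi$) is a fine elaboration of what the paper compresses into a one-line appeal to monotone convergence.
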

\begin{proof}
The monotone convergence theorem implies that
\[
L_{\xi}\left[\left.f\,\right|\,y,\right]=\lim_{n\to\infty}L_{\xi}\left[\left.f_n\,\right|\,y,\right]= g\left( y-\tau(f_n) \right).
\]
Therefore the corollary follows from Corollary \ref{cor:g limits}.
\end{proof}
\begin{lem}
\label{lem:maxFin}
Let $\xi\gg 0$ be a point process such that $L_{\xi}\left[\left.f\,\right|\,\cdot\,\right]$ is uniquely
supported on $\left[g\right]$. If Assumption \ref{Assumption:regularity-1} holds,
then $\mathcal{M}\left(\xi\right)$ is almost
surely bounded and there exists $\tau_{\mathcal{M}}=\tau_{\mathcal{M}}\left(\xi\right)\in\mathbb{R}$ such that
\begin{align*}
L_{\xi}\left[\left.\infty\cdot\mathbf{1}_{\left(0,\infty\right)}\,\right|\, y\right] & =\mathbb{P}\left[\mathcal{\mathcal{M}\left(\xi\right)}\leq y\right]=g\left(y-\tau_{\mathcal{M}}\right).\\
\end{align*}
\end{lem}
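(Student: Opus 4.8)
The idea is to realize the event $\{\mathcal{M}(\xi)\le y\}$ as a monotone limit of events of the form $\{\langle \theta_y f_n,\xi\rangle \text{ small}\}$, so that we can apply the uniqueness of the support together with Corollary \ref{cor:frz_specific_func} and Corollary \ref{cor:g limits}. Concretely, I would choose functions $f_n\in C_c^+(\mathbb{R})$ with $f_n \uparrow \infty\cdot\mathbf{1}_{(0,\infty)}$ pointwise (for instance $f_n = n\cdot\mathbf{1}_{(0,n)}$ smoothed at the endpoints, or simply $f_n(x) = n\min(1,\mathrm{dist}(x,(-\infty,0]\cup[n,\infty)))$, truncated to have compact support). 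Then $\langle \theta_y f_n,\xi\rangle \uparrow \infty\cdot\mathbf{1}_{\{\xi((y,\infty))>0\}}$ almost surely, and by monotone convergence
\[
L_\xi[\,\infty\cdot\mathbf{1}_{(0,\infty)}\,|\,y\,] \;=\; \lim_{n\to\infty} L_\xi[\,f_n\,|\,y\,] \;=\; \mathbb{E}\,e^{-\infty\cdot\mathbf{1}_{\{\xi((y,\infty))>0\}}} \;=\; \mathbb{P}\{\xi((y,\infty))=0\} \;=\; \mathbb{P}\{\mathcal{M}(\xi)\le y\},
\]
where the last equality is just the definition of $\mathcal{M}(\xi)$. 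This gives the first claimed equality for free, independently of Assumption \ref{Assumption:regularity-1}.

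**Getting the shift.** For the second equality, by uniqueness of the support each $L_\xi[\,f_n\,|\,\cdot\,] = g(\cdot - \tau_{f_n})$ for some $\tau_{f_n}\in\mathbb{R}$. Hence the limiting function $h(y) := \mathbb{P}\{\mathcal{M}(\xi)\le y\}$ is a pointwise limit of translates $g(\cdot - \tau_{f_n})$. By Corollary \ref{cor:g limits}, the only possibilities are that $\tau_{f_n}\to\pm\infty$ and $h\equiv 0$ or $h\equiv 1$, or that $\tau_{f_n}\to\tau$ for some finite $\tau$ and $h(\cdot)=g(\cdot-\tau)$; in the latter case we set $\tau_{\mathcal{M}}:=\tau$ and we are done. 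So the real content is to rule out the degenerate cases $h\equiv 0$ and $h\equiv 1$. The case $h\equiv 1$ would say $\mathbb{P}\{\mathcal{M}(\xi)\le y\}=1$ for all $y$, i.e. $\mathcal{M}(\xi)=-\infty$ a.s., which contradicts $\xi\gg 0$ (if $\xi(\mathbb{R})>0$ a.s.\ then $\xi$ has at least one atom, so $\mathcal{M}(\xi)>-\infty$ with positive probability — in fact this forces $h(y)<1$ for $y$ small). The case $h\equiv 0$ would say $\mathbb{P}\{\mathcal{M}(\xi)\le y\}=0$ for every $y$, i.e.\ $\mathcal{M}(\xi)=+\infty$ a.s., which is exactly the statement that $\xi$ is \emph{not} a.s.\ bounded above — so this is precisely the alternative we must exclude, and it is here that Assumption \ref{Assumption:regularity-1} must enter.

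**The main obstacle: using the tail condition \eqref{eq:21-1}.** Excluding $h\equiv 0$ is the crux. The point is that the $\tau_{f_n}$ are monotone (as $f_n\uparrow$, the shifts $\tau_{f_n}$ are nondecreasing by the monotonicity of $g$ and of $L_\xi[\cdot\,|\,y]$ in the function argument), so either $\tau_{f_n}\to\infty$ or $\tau_{f_n}\to\tau<\infty$; I must show the former cannot happen. I would argue by a comparison of rates: pick a single fixed $f=f_1\in C_c^+(\mathbb{R})$ supported in $(0,1)$, say, and use the tail asymptotic \eqref{eq:21-1} to control how fast $1-L_\xi[\,f_n\,|\,y\,]$ can decay as a function of $y$. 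Specifically, \eqref{eq:21-1} says $1-g(x+y)\sim e^{-cy}(1-g(x))$ as $x\to\infty$, i.e.\ $g$ approaches $1$ only exponentially fast with a fixed rate $c$; in particular, for a fixed $y_0$, $1-g(y_0-\tau_{f_n})$ cannot go to $0$ faster than some fixed exponential in $\tau_{f_n}$. On the other hand, $1 - L_\xi[\,f_n\,|\,y_0\,] = \mathbb{E}[1 - e^{-\langle \theta_{y_0}f_n,\xi\rangle}] \ge c_0\,\mathbb{P}\{\xi((y_0,y_0+1))>0\}$ for a constant $c_0>0$ depending on the height of $f_1$, uniformly in $n$ (since $f_n\ge f_1$). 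This lower bound is a strictly positive constant independent of $n$, so $1-g(y_0-\tau_{f_n})$ is bounded below by a fixed positive constant, forcing $\tau_{f_n}$ to stay bounded. (If $\mathbb{P}\{\xi((y_0,y_0+1))>0\}=0$ for that particular $y_0$, translate: by $\xi\gg0$ the measure $\nu_\xi$ gives positive mass to some interval, and we center $f_1$ there; more simply, shift $y_0$.) Hence $\tau_{f_n}\to\tau_{\mathcal{M}}\in\mathbb{R}$, $h(\cdot)=g(\cdot-\tau_{\mathcal{M}})$, and since $g$ is increasing with $\sup g = 1$ and $\inf g = \mathbb{P}\{\xi(\mathbb{R})=0\} = 0$ (the latter by $\xi\gg0$ and Lemma \ref{lem:g monotone}), this is a genuine distribution function, so $\mathcal{M}(\xi)$ is a.s.\ finite and a.s.\ bounded (the whole upper tail being $g$, which has the prescribed exponential decay). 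I expect the careful bookkeeping in this last comparison — getting the uniform-in-$n$ lower bound on $1 - L_\xi[f_n\,|\,y_0]$ and matching it against the exact rate in \eqref{eq:21-1} — to be the only delicate point; everything else is monotone convergence and an appeal to Corollary \ref{cor:g limits}.
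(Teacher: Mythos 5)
Your first equality (monotone approximation of $\infty\cdot\mathbf{1}_{(0,\infty)}$ by $f_n\in C_c^+(\mathbb{R})$) and the dichotomy via Corollary \ref{cor:g limits} are fine, as is the exclusion of $h\equiv 1$. But the crux of the lemma --- excluding $h\equiv 0$, i.e.\ proving that $\mathcal{M}(\xi)<\infty$ with positive (hence, by the form of $g$, full) probability --- is exactly where your argument breaks. You derive a uniform-in-$n$ lower bound $1-L_{\xi}[\,f_n\,|\,y_0\,]=1-g(y_0-\tau_{f_n})\geq c_0>0$ and conclude that this "forces $\tau_{f_n}$ to stay bounded." It does not: since $g$ is increasing, $1-g$ is decreasing, so a lower bound on $1-g(y_0-\tau_{f_n})$ only bounds $y_0-\tau_{f_n}$ from \emph{above}, i.e.\ bounds $\tau_{f_n}$ from \emph{below}, which is vacuous. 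Indeed, in the bad scenario $\tau_{f_n}\to+\infty$ one has $g(y_0-\tau_{f_n})\to\inf g=0$ and hence $1-g(y_0-\tau_{f_n})\to 1$, perfectly consistent with your bound. To rule out $\tau_{f_n}\to\infty$ you would need a lower bound on $L_{\xi}[\,f_n\,|\,y_0\,]$ itself, uniform in $n$, which is precisely the statement $\mathbb{P}\{\mathcal{M}(\xi)\leq y_0\}>0$ that you are trying to prove; so the argument is circular in addition to having the inequality pointing the wrong way. Also, the tail condition \eqref{eq:21-1} concerns the behavior of $g$ near $1$ (i.e.\ $x\to+\infty$), which is irrelevant to the regime $y_0-\tau_{f_n}\to-\infty$ you are implicitly working in.

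The missing ingredient is a union bound that is where \eqref{eq:21-1} actually enters. By Lemma \ref{lem:g monotone}, $\mathbb{P}\{\xi([0,1])=0\}\in(0,1)$, so Corollary \ref{cor:frz_specific_func} applies to $\infty\cdot\mathbf{1}_{[0,1]}$ and gives a single $\tau\in\mathbb{R}$ with $\mathbb{P}\{\xi([i,i+1])>0\}=1-g(i-\tau)$ for all $i$. Then
\[
\mathbb{P}\left\{ \mathcal{M}(\xi)\geq n\right\} \leq\sum_{i=n}^{\infty}\mathbb{P}\left\{ \xi([i,i+1])>0\right\} =\sum_{i=n}^{\infty}\bigl(1-g(i-\tau)\bigr),
\]
and \eqref{eq:21-1} makes the terms decay geometrically, so the tail sum tends to $0$ and $\mathcal{M}(\xi)<\infty$ almost surely. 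Once this is known, $L_{\xi}[\,\infty\cdot\mathbf{1}_{(0,\infty)}\,|\,y\,]=\mathbb{P}\{\mathcal{M}(\xi)\leq y\}\in(0,1)$ for suitable $y$, and Corollary \ref{cor:frz_specific_func} (equivalently, your Corollary \ref{cor:g limits} step) yields the existence of $\tau_{\mathcal{M}}$. Replacing your "comparison of rates" paragraph by this union-bound argument repairs the proof and is essentially the paper's proof.
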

\begin{proof}
From \eqref{eq:gbounds} it easily follows that $\mathbb P \left\{\xi([0,1])=0\right\}\in(0,1)$. Thus,
 using the previous corollary, there exists a real $\tau$ such that for any $n\in\mathbb{N}$,
\begin{align*}
  \mathbb{P}\left\{ \mathcal{M}\left(\xi\right)\geq n\right\} & \leq\sum_{i=n}^{\infty}\mathbb{P}\left\{ \xi\left(\left[i,i+1\right]\right)>0\right\} \\
 & =\sum_{i=n}^{\infty}\left(1-L_{\xi}\left[\left.\infty\cdot\mathbf{1}_{\left[0,1\right]}\,\right|\, i\, \right]\right)=\sum_{i=n}^{\infty}\left(1-g\left(y-\tau\right)\right).
\end{align*}
From Assumption \ref{Assumption:regularity-1} it easily follows that
as $n\to\infty$, the probability above converges to $0$ and therefore $\mathcal{M}\left(\xi\right)$ is almost
surely bounded. Thus, for the function $h = \infty\cdot\mathbf{1}_{\left(0,\infty\right)}$ with $y$ sufficiently high, $\mathbb P\left\{
\left\langle \theta_y h,\xi\right\rangle <\infty \right\}>0$. From the uniqueness of the support of  $L_{\xi}\left[\left.f\,\right|\,\cdot\right]$ it easily follows that $\mathbb P\left\{
\left\langle \theta_y h,\xi\right\rangle >0 \right\}>0$. Hence, applying Corollary \ref{cor:frz_specific_func} to $f$ completes
the proof.
\end{proof}
In the remainder of the section we consider the case where the intensity
measure of the point process is finite.
\begin{prop}
\label{prop:nice xi} Let $\xi\gg 0$ be a point process such that $L_{\xi}\left[\left.f\,\right|\,\cdot\,\right]$ is uniquely
supported on $\left[g\right]$. Assume for concreteness
that $g$ is increasing.
If there exists an open interval $I$ such that $\nu_{\xi}\left(I\right)<\infty$,
then there exist some constants $C,\, c>0$ such that:
\begin{enumerate}
\item \label{enu:Prop10.1}For any finite, open interval $I$, $\nu_{\xi}\left(I\right)<\infty$.
\item \label{enu:Prop10.2}The measure $\nu_{\xi}$ is absolutely continuous
relative to Lebesgue measure and its density is given by $Ce^{-cx}$.
\item \label{enu:Prop10.3}The maximum of the point process $\mathcal{M}=\mathcal{M}\left(\xi\right)$
is almost surely finite and $\mathbb{P}\left\{ \mathcal{M}\geq x\right\} \leq\frac{C}{c}e^{-cx}$,
for any $x>0$.
\item \label{enu:Prop10.4}The function $g$ satisfies (\ref{eq:21-1}),
i.e.
\[
\lim_{x\rightarrow\infty}\frac{1-g\left(x+y\right)}{1-g\left(x\right)}=\lim_{x\rightarrow\infty}\frac{\log g\left(x+y\right)}{\log g\left(x\right)}=e^{-cy}.
\]

\end{enumerate}
\end{prop}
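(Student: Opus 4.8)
The plan is to bootstrap from a single interval of finite intensity to a full exponential description of $\nu_\xi$, using the translation structure of $L_\xi$, and then to read off properties (3) and (4) as consequences. First I would observe that for any finite open interval $J$, the indicator $\infty\cdot\mathbf 1_J$ can be approximated from below monotonically by functions in $C_c^+(\mathbb R)$, so by Corollary \ref{cor:frz_specific_func} (provided $L_\xi[\,\infty\cdot\mathbf 1_J\mid y\,]\in(0,1)$ for some $y$, which is exactly the statement that $\mathbb P\{\xi(\theta_y J)=0\}\in(0,1)$) we get $L_\xi[\,\infty\cdot\mathbf 1_J\mid\cdot\,]\approx g(\cdot)$. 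More usefully, I would work with continuous $f\in C_c^+(\mathbb R)$ supported in $I$ and exploit that $L_\xi[\,tf\mid y\,]=g(y-\tau_{tf})$ for every $t>0$. Differentiating $-\log L_\xi[\,tf\mid y\,]$ in $t$ at $t=0^+$ yields $\int f\,d\nu_\xi$ restricted (in the relevant sense) to the shifted support; the key point is that because $g(y-\tau_{tf})$ is, as a function of $(t,y)$, built from a single profile $g$ composed with a shift, the map $t\mapsto\tau_{tf}$ carries all the information, and matching the linear-in-$t$ behaviour forces $\nu_\xi$ to transform in a rigid way under translations.

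Concretely, the heart of the argument is the following translation covariance. For $f\in C_c^+(\mathbb R)$ with support in $I$, we have on one hand $-\log L_\xi[\,tf\mid y+h\,]=-\log g(y+h-\tau_{tf})$, and on the other hand $-\log L_\xi[\,tf\mid y\,]=-\log g(y-\tau_{tf})$; combining with Assumption-free consequences of unique support (Lemma \ref{lem:g monotone}), and taking $t\to0$ so that $-\log L_\xi[\,tf\mid y\,]\sim t\langle\theta_y f,\nu_\xi\rangle=t\int f(x-y)\,\nu_\xi(dx)$, I would deduce that
\[
\int f(x-y-h)\,\nu_\xi(dx)\Big/\int f(x-y)\,\nu_\xi(dx)
\]
has a limit as $y\to\infty$ that is independent of $f$ (this uses the asymptotic relation $\lim_{x\to\infty}(1-g(x+y))/(1-g(x))$ type behaviour, but in fact only needs that a single limit exists, which comes from the rigidity of the profile $g$ and Corollary \ref{cor:g limits}). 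A measure $\nu_\xi$, finite on finite intervals, for which $\nu_\xi(A+h)/\nu_\xi(A)$ tends to a constant $\lambda(h)$ as the sets recede to $+\infty$, with $\lambda(h_1+h_2)=\lambda(h_1)\lambda(h_2)$, must have $\lambda(h)=e^{-ch}$ for some $c\in\mathbb R$, and then a standard argument (Choquet--Deny / renewal-type) upgrades this to $\nu_\xi(dx)=Ce^{-cx}dx$ exactly on all of $\mathbb R$, with $C>0$ and $c>0$ (the sign of $c$ is fixed by finiteness on $(x,\infty)$, which we will need below, together with $\mathbb P\{\mathcal M(\xi)<\infty\}>0$ forced by $g<1$). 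This establishes (1) and (2) simultaneously; (1) is then just the observation that $Ce^{-cx}$ integrates over any finite interval.

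For (3), once $\nu_\xi(dx)=Ce^{-cx}dx$ with $c>0$, the first-moment (Markov) bound gives
\[
\mathbb P\{\mathcal M(\xi)\ge x\}=\mathbb P\{\xi((x,\infty))>0\}\le\mathbb E\,\xi((x,\infty))=\int_x^\infty Ce^{-ct}dt=\tfrac{C}{c}e^{-cx},
\]
which is finite and tends to $0$, so $\mathcal M(\xi)<\infty$ a.s. For (4), I would combine this tail bound with Lemma \ref{lem:maxFin}'s identity $\mathbb P\{\mathcal M(\xi)\le y\}=g(y-\tau_{\mathcal M})$ — except that Lemma \ref{lem:maxFin} presupposes Assumption \ref{Assumption:regularity-1}, so here I must instead derive the $g$-asymptotics directly. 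The clean route: the same approximation argument (Corollary \ref{cor:frz_specific_func} applied to $\infty\cdot\mathbf 1_{(0,\infty)}$, whose finiteness of $L_\xi$ at large $y$ we just secured via part (3)) gives $\mathbb P\{\mathcal M(\xi)\le y\}=g(y-\tau_{\mathcal M})$ unconditionally in this setting, and then for a single fixed $f\in C_c^+(\mathbb R)$ with support in a bounded interval one has, as $y\to\infty$, $1-L_\xi[\,f\mid y\,]\sim\langle\theta_y f,\nu_\xi\rangle=(\int f(x)e^{cx}dx)\,Ce^{-cy}$ by dominated convergence and the explicit density, while $-\log L_\xi[\,f\mid y\,]=-\log g(y-\tau_f)\sim1-g(y-\tau_f)$; matching these two asymptotics forces $1-g(x)\sim\mathrm{const}\cdot e^{-cx}$, hence both limits in \eqref{eq:21-1} equal $e^{-cy}$.

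The step I expect to be the main obstacle is the translation-covariance argument in the second paragraph: extracting from "$-\log L_\xi[\,tf\mid y\,]=-\log g(y-\tau_{tf})$ for all $t,y$" the conclusion that $\nu_\xi(\cdot+h)/\nu_\xi(\cdot)$ has a well-defined limiting ratio at $+\infty$. The difficulty is that a priori we do not yet know $\nu_\xi$ is even $\sigma$-finite away from $I$, so one cannot blithely write $\langle\theta_y f,\nu_\xi\rangle$; the cure is to first run the argument only with $f$ supported inside $I$ and $y$ in a small neighbourhood of $0$, obtaining local absolute continuity and the functional equation $\lambda(h_1+h_2)=\lambda(h_1)\lambda(h_2)$ for $h$ in a neighbourhood of $0$, then use monotonicity/continuity of $g$ (Lemma \ref{lem:g monotone}) plus the fact that $L_\xi[\,\cdot\mid y\,]$ is controlled on all of $\mathbb R$ by its values near $0$ through the shift $\tau$ to propagate finiteness and the exponential form outward to all of $\mathbb R$. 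Everything else is routine once this rigidity is in hand.
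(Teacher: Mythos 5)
Your overall plan (extract $\nu_\xi$ from the small-$t$ behaviour of $t\mapsto L_\xi[tf\mid y]=g(y-\tau_{tf})$, get a multiplicative relation under translation, conclude an exponential density, then Markov for part (3)) is the same skeleton as the paper's proof, but the crucial rigidity step is carried out incorrectly. You formulate the translation relation only asymptotically --- ``$\nu_\xi(A+h)/\nu_\xi(A)$ tends to $\lambda(h)$ as the sets recede to $+\infty$'' --- and then claim a Choquet--Deny/renewal upgrade to the exact density $Ce^{-cx}dx$ on all of $\mathbb R$. That implication is false: the measure $(e^{-cx}+e^{-2cx})dx$ has the asymptotic ratio property at $+\infty$ with $\lambda(h)=e^{-ch}$ but is not exponential, so no general theorem can perform that upgrade. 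What actually saves the argument (and is the paper's point) is that the relation is \emph{exact}: for $f$ (or an interval $I$) supported near the origin and any two admissible shifts $y_1,y_2$, the identity $\nu_\xi(I+y_i)=\lim_{t\searrow 0}t^{-1}(1-g(y_i-\tau^I_t))$ together with $\tau^I_t\to-\infty$ gives
\[
\frac{\nu_\xi(I+y_1)}{\nu_\xi(I+y_2)}=\lim_{x\to\infty}\frac{1-g(x+y_1-y_2)}{1-g(x)},
\]
a quantity depending only on $y_1-y_2$; hence $\nu_\xi(I+y_1)/\nu_\xi(I+y_2)=e^{-c(y_1-y_2)}$ holds exactly for every finite interval and every shift (after the finiteness is propagated outward, which is done by iterating the same limit with half-shifts). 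The density then follows from additivity of $\nu_\xi$ over abutting intervals, monotonicity in the endpoint, and a short check that points carry no mass --- no Choquet--Deny and no asymptotic-to-exact passage is needed or available.

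Two further steps are gaps as written. First, your justification that $c>0$ is circular: finiteness of $\nu_\xi$ on half-lines $(x,\infty)$ is a consequence of $c>0$, not an a priori fact, and ``$g<1$ forces $\mathbb P\{\mathcal M(\xi)<\infty\}>0$'' is not true at this stage (strictness of the upper bound in \eqref{eq:gbounds} only says the process charges far-right windows with positive probability). The paper rules out $c=0$ by a separate use of (US): for a fixed $f\le\mathbf 1_{(0,1)}$ and any $\epsilon,t>0$ one can shift so that $L_\xi[\theta_y(tf)]<\epsilon$, which forces $\sup_y\nu_\xi((0,1)+y)=\infty$, incompatible with a constant translation ratio. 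Second, your route to part (4) relies on $1-L_\xi[f\mid y]\sim\langle\theta_yf,\nu_\xi\rangle$ as $y\to\infty$; the inequality $1-\mathbb Ee^{-X}\le\mathbb EX$ is free, but the matching lower asymptotic requires ruling out clustering (a uniform-integrability statement about $\langle\theta_yf,\xi\rangle$ conditioned on being positive) which nothing in the hypotheses provides, and your conclusion $1-g(x)\sim\mathrm{const}\cdot e^{-cx}$ is strictly stronger than \eqref{eq:21-1}, which permits regularly varying corrections (as indeed occur for BBM, where the tail of $g$ carries an extra factor of $x$). Fortunately this detour is unnecessary: once the exact ratio identity is in hand, the limit $e^{\alpha(y)}\triangleq\lim_{x\to\infty}(1-g(x+y))/(1-g(x))$ exists, $\alpha$ solves Cauchy's functional equation and is monotone, so $\alpha(y)=-cy$, and \eqref{eq:21-1} (including the $\log$ form, via $g\to1$) follows directly, before and independently of the density computation.
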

The proposition, of course, also holds for the case where $g$ is
decreasing with the obvious adjustments to the above statements.
\begin{rem}
Note, in particular, that Proposition \ref{prop:nice xi}
implies that Assumption
\ref{Assumption:regularity-1} holds in the case that the intensity
measure is boundedly finite.
\end{rem}
\begin{proof}
Suppose that $\nu_{\xi}\left(\left(a,b\right)\right)<\infty$ for
some $a<b\in\mathbb{R}$. Set $\eta=\left(a+b\right)/2$ and $I=\left(a+\eta/2,b-\eta/2\right)$.
For any $t>0$ and $y\in\mathbb{R}$, by Corollary \ref{cor:frz_specific_func},
$L_{\xi}\left[\left.t\cdot\mathbf{1}_{I}\,\right|\, y\right]=g\left(y-\tau_{t}^{I}\right)$
for some $\tau_{t}^{I}\in\mathbb{R}$.

Recall that for any nonnegative random variable $X$, the expectation
$\mathbb{E}X$ is finite if and only if the (one-sided) derivative
$\lim_{t\searrow0}\frac{1-\mathbb{E}e^{-tX}}{t}$ exists and is finite
(see, for instance, \cite{Feller2nd}, p.435).

Thus, since for any $y\in\left[-\eta/2,\eta/2\right]$, we have $\nu_{\xi}\left(I+y\right)<\infty$,
it follows that
\begin{align}
\nu_{\xi}\left(I+y\right) & =\lim_{t\searrow0}\left[t^{-1}\left(1-\mathbb{E}\left\{ \exp\left(-t\xi\left(I+y\right)\right)\right\} \right)\right]\nonumber \\
 & =\lim_{t\searrow0}\left[t^{-1}\left(1-g\left(y-\tau_{t}^{I}\right)\right)\right].\label{eq:6}
\end{align}

Therefore, for any $y_{1}$, $y_{2}\in\left[-\eta/2,\eta/2\right]$,
\begin{equation}
\lim_{t\searrow0}\frac{1-g\left(y_{1}-\tau_{t}^{I}\right)}{1-g\left(y_{2}-\tau_{t}^{I}\right)}=\frac{\nu_{\xi}\left(I+y_{1}\right)}{\nu_{\xi}\left(I+y_{2}\right)},\label{eq:5}
\end{equation}
where $\nu_{\xi}\left(I+y\right)>0$ from the strict upper bound of
(\ref{eq:gbounds}) of Lemma \ref{lem:g monotone}, considering some
function $f\in C_{c}^{+}\left(\mathbb{R}\right)$ supported on $I+y$.

Note that for any $y\in\mathbb{R}$, $\mathbb{E}\left\{ \exp\left(-t\xi\left(I+y\right)\right)\right\} $
increases to $1$ as $t\searrow0$. Thus, since $g$ is increasing,
by Lemma \ref{lem:g monotone}, $\tau_{t}^{I}\rightarrow-\infty$.
Hence, for any $y_{1}$, $y_{2}\in\left[-\eta/2,\eta/2\right]$, the
limit of (\ref{eq:5}) depends, in fact, only on the difference $y_{2}-y_{1}$,
\begin{equation}
\lim_{t\searrow0}\frac{1-g\left(y_{1}-\tau_{t}^{I}\right)}{1-g\left(y_{2}-\tau_{t}^{I}\right)}=\lim_{x\rightarrow\infty}\frac{1-g\left(x+y_{1}\right)}{1-g\left(x+y_{2}\right)}=\lim_{x\rightarrow\infty}\frac{1-g\left(x+y_{1}-y_{2}\right)}{1-g\left(x\right)}.\label{eq:7}
\end{equation}
It follows, in particular, that for any $y\in\left[-\eta,\eta\right]$,$\lim_{x\rightarrow\infty}\frac{1-g\left(x+y\right)}{1-g\left(x\right)}$
exists.

Now, for any $y\in\left[-\eta,\eta\right]$, the limit
\[
\lim_{t\searrow0}\frac{1-g\left(y-\tau_{t}^{I}\right)}{t}=\lim_{t\searrow0}\frac{1-g\left(y-\tau_{t}^{I}\right)}{1-g\left(y/2-\tau_{t}^{I}\right)}\cdot\lim_{t\searrow0}\frac{1-g\left(y/2-\tau_{t}^{I}\right)}{t}
\]
 exists and thus $\nu_{\xi}\left(I+y\right)$ is finite, for any $y\in\left[-\eta,\eta\right]$.
By reiterating this argument we obtain that, actually, $\nu_{\xi}\left(I+y\right)$
is finite for any $y\in\mathbb{R}$. This completes the proof of (\ref{enu:Prop10.1}).

\medskip{}
Since (\ref{enu:Prop10.1}) holds, by the same arguments as in the
previous part of the proof, (\ref{eq:6}), (\ref{eq:5}) and (\ref{eq:7})
hold for any finite, open interval $I$ and any $y$, $y_{1}$, $y_{2}\in\mathbb{R}$,
with some $\tau_{t}^{I}\in\mathbb{R}$ that depends on $I$.

Define $\alpha:\mathbb{R}\rightarrow\mathbb{R}$ by the relation
\begin{equation}
e^{\alpha\left(y\right)}\triangleq\lim_{x\rightarrow\infty}\frac{1-g\left(x+y\right)}{1-g\left(x\right)}>0\label{eq:9}
\end{equation}
(where the inequality follows from (\ref{eq:5}) and the remark on
$\nu_{\xi}\left(I+y\right)>0$ right after it) .

Note that $\alpha$ is a solution to Cauchy's functional equation,
\begin{align*}
\exp\left\{ \alpha\left(y+z\right)\right\}  & =\lim_{x\rightarrow\infty}\frac{1-g\left(x+y+z\right)}{1-g\left(x+z\right)}\lim_{x\rightarrow\infty}\frac{1-g\left(x+z\right)}{1-g\left(x\right)}\\
 & =\lim_{x\rightarrow\infty}\frac{1-g\left(x+y\right)}{1-g\left(x\right)}\lim_{x\rightarrow\infty}\frac{1-g\left(x+z\right)}{1-g\left(x\right)}=\exp\left\{ \alpha\left(y\right)+\alpha\left(z\right)\right\} .
\end{align*}
Since $g$ is increasing, $\alpha$ is decreasing, and therefore $\alpha\left(y\right)=-cy$
for some $c\geq0$.

Hence, from (\ref{eq:5}) and (\ref{eq:7}), for any $y_{1}$, $y_{2}\in\mathbb{R}$,
\begin{equation}
\frac{\nu_{\xi}\left(I+y_{1}\right)}{\nu_{\xi}\left(I+y_{2}\right)}=e^{-c\left(y_{1}-y_{2}\right)}.\label{eq:8}
\end{equation}

Now, to rule out the case $c=0$, take some $f\in C_{c}^{+}\left(\mathbb{R}\right)$
such that $f\left(y\right)\leq\mathbf{1}_{\left(0,1\right)}$ and
note that since $L_{\xi}\left[\left.f\,\right|\,\cdot\,\right]$ is
uniquely supported and by Lemma \ref{lem:g monotone}, for any $\epsilon,\, t>0$
there exists $y\in\mathbb{R}$ such that
\[
\epsilon>\mathbb{E}\left\{ \exp\left(-\left\langle \theta_{y}\left(t\cdot f\right),\xi\right\rangle \right)\right\} ,
\]
thus for any $\delta>0$
\[
\epsilon>e^{-\delta}\mathbb{P}\left\{ \left\langle \theta_{y}\left(t\cdot f\right),\xi\right\rangle \leq\delta\right\}
\]
and
\[
\epsilon e^{\delta}>\mathbb{P}\left\{ \left\langle \theta_{y}f,\xi\right\rangle \leq\delta/t\right\} .
\]

Since $\epsilon$, $t$ and $\delta$ are arbitrary, it follows that
$\sup_{y\in\mathbb{R}}\nu_{\xi}\left(\left(0,1\right)+y\right)=\infty$,
which in light of (\ref{eq:8}) implies that $c\neq0$.

Substituting $\alpha\left(y\right)=-cy$ in (\ref{eq:9}) yields
\[
\lim_{x\rightarrow\infty}\frac{1-g\left(x+y\right)}{1-g\left(x\right)}=\lim_{x\rightarrow\infty}\frac{\log g\left(x+y\right)}{\log g\left(x\right)}=e^{-cy},
\]
where the equality of the two limits follows from the fact that $g\left(x+y\right)\rightarrow1$
as $x\rightarrow\infty$ for any $y\in\mathbb{R}$, and that $\left(1-e^{-w}\right)/w\rightarrow1$
as $w\rightarrow0$. This is exactly part (\ref{enu:Prop10.4}) of
the proposition.

To get rid of the nuisance of (\ref{eq:8}) being true only for open
intervals $I$, note that for any $x>0$
\[
\nu_{\xi}\left(\left(-x,x\right)\right)\geq\nu_{\xi}\left(\left(-x,-x/3\right)\right)+\nu_{\xi}\left(\left(-x/3,x/3\right)\right)+\nu_{\xi}\left(\left(x/3,x\right)\right)
\]
which, by (\ref{eq:8}), implies that
\[
2^{-1}\nu_{\xi}\left(\left(-x,x\right)\right)\geq\nu_{\xi}\left(\left(-x/3,x/3\right)\right).
\]
Hence $\nu_{\xi}\left(\left\{ 0\right\} \right)=0$. By a similar
argument for any $x\in\mathbb{R}$, $\nu_{\xi}\left(\left\{ x\right\} \right)=0$
and thus (\ref{eq:8}) hold for any finite, nontrivial interval $I$,
not necessarily open.

Using this, observe that for any $n\in\mathbb{N}$ and $x\in\mathbb{R}$,
\begin{align*}
\nu_{\xi}\left(\left[0,nx\right)\right) & =\nu_{\xi}\left(\left[0,x\right)\right)+\nu_{\xi}\left(\left[x,2x\right)\right)+\ldots+\nu_{\xi}\left(\left[\left(n-1\right)x,nx\right)\right)\\
 & =\nu_{\xi}\left(\left[0,x\right)\right)\left(1+e^{-cx}+e^{-2cx}+\ldots+e^{-\left(n-1\right)cx}\right)\\
 & =\nu_{\xi}\left(\left[0,x\right)\right)\frac{1-e^{-cnx}}{1-e^{-cx}}.
\end{align*}
Setting $C=c\nu_{\xi}\left(\left[0,1\right)\right)/\left(1-e^{-c}\right)$
and restricting to the case $x=1$ we obtain $\nu_{\xi}\left(\left[0,n\right)\right)=\frac{C}{c}\left(1-e^{-cn}\right)$.
Then, for any rational $p/q$, $p$, $q\in\mathbb{N}$, by setting
$n=q$ and $x=p/q$,
\[
\nu_{\xi}\left(\left[0,p\right)\right)=\nu_{\xi}\left(\left[0,p/q\right)\right)\frac{1-e^{-cp}}{1-e^{-cp/q}}
\]
and
\[
\nu_{\xi}\left(\left[0,p/q\right)\right)=\frac{C}{c}\left(1-e^{-cp/q}\right).
\]

Since $x\mapsto\nu_{\xi}\left(\left[0,x\right)\right)$, $x\geq0$
is monotone, we have for all $x\geq0$
\[
\nu_{\xi}\left(\left[0,x\right)\right)=\frac{C}{c}\left(1-e^{-cx}\right).
\]
Since $\nu_{\xi}\left(\left[-1,0\right)\right)=e^{c}\nu_{\xi}\left(\left[0,1\right)\right)=\frac{C}{c}\left(e^{c}-1\right)$,
by similar arguments it is seen that for $x\leq0$,
\[
\nu_{\xi}\left(\left[-x,0\right)\right)=\frac{C}{c}\left(e^{cx}-1\right).
\]

Hence, $\nu_{\xi}$ is absolutely continuous relative to Lebesgue
measure and has density $Ce^{-cx}$, and part (\ref{enu:Prop10.2})
is completed.

Lastly, note that part (\ref{enu:Prop10.3}) simply follows from Markov's
inequality where $\nu_{\xi}\left(\left[x,\infty\right)\right)$ is
easily computed from the density just derived. This completes the
proof.
\end{proof}

\section{\label{sec:(Re)construction}(Re)construction of the decoration}

Under Assumption \ref{Assumption:regularity_grl_proc} below, we show in this
section that the following limiting point process exists.
\begin{defn}
\label{def:xibar}
Suppose that the point process $\xi$ satisfies Assumption \ref{Assumption:regularity_grl_proc}. Define $\bar \xi$ to be the limit in distribution as $y\to \infty$ of $\theta_{-y}\xi$
conditioned on $\left\{ \xi\left(\left(y,\infty\right)\right)>0\right\} $.
\end{defn}
With $\bar \xi$ defined, we can define the following.
\begin{defn*}
[PP-D] Suppose that the point process $\xi$ satisfies Assumption \ref{Assumption:regularity_grl_proc}.
Define the point process $D\left(\xi\right)$ to be the point process
$\bar \xi$ translated so its maximum is at $0$, that
is, $D\left(\xi\right)\triangleq\theta_{-\mathcal{M}\left(\bar \xi\right)}\bar \xi$.
\end{defn*}

Before we proceed, to see why this process is of interest, we
investigate it in the case that $\xi$ is a DPPP.
\begin{example}
Suppose $\xi=\sum_{i\geq1}\zeta_{i}D_{i}\sim DPPP\left(e^{-cx},D\right)$
for some point process $D$ and some constant $c>0$, such that the
maximum of the decoration, $\mathcal{M}\left(D\right)$, is $0$ almost
surely.

Now consider the process $\xi$
conditioned on $\left\{ \xi\left(\left(y,\infty\right)\right)>0\right\} $.
The number of decorations that intersect $\left(y,\infty\right)$,
i.e. the number of shifted copies $\zeta_{i}D_{i}$ of $D$ that attribute
a positive measure to $\left(y,\infty\right)$, is a Poisson random
variable of parameter $c^{-1}e^{-cy}$, conditioned on being positive.
As $y\rightarrow\infty$, this random variable converges to $1$. Similarly, the number of decorations that intersect
 $\left(y-t,y\right]$ converges to $0$, for any fixed $t>0$.
Moreover, the maximum of each of the decorations that intersect $\left(y,\infty\right)$
is distributed like $y+X$ where $X\sim\mbox{exp}\left(c\right)$.
Hence, one can extract the law of the process $\theta_{X}D$,
$X\sim\mbox{exp}\left(c\right)$, by investigating $\theta_{-y}\xi$ under the conditioning above
and letting $y\rightarrow\infty$. Once we have the law of this process,
we can also recover the law of $D$.

Of course, if $\psi\sim SDPPP\left(e^{-cx},D,Z\right)$ and the right tail of the distribution of $Z$ decays fast enough,
then we should also be able to study the law of $D$ from $\bar \psi$.\qed\end{example}

In order to prove that $\bar \xi$ exists, we construct some other point process and show that
it is equal in distribution to $\bar \xi$.
The construction is done in a few stages, in each of which we define
(and, when needed, prove the existence of) a point process based on
the previous.

The following assumption does not imply that $L_{\xi}\left[\left.f\,\right|\,\cdot\,\right]$
is uniquely supported. However, as we shall see, when we do assume uniqueness, using $D\left(\xi\right)$ as the decoration to define a DPPP
we recover the original process $\xi$ up to a random shift (under
Assumption \ref{Assumption:regularity-1}). This will be the key to
the proof of Theorem \ref{thm:main}.

\begin{assm}

\label{Assumption:regularity_grl_proc} For some constant $c>0$ and
some real increasing function $g$, there exist real numbers $\tilde{\tau}_{f}=\tilde{\tau}_{f}^{g}\left(\xi\right)$
such that

\begin{align}
\lim_{x\rightarrow\infty}\frac{1-L_{\xi}\left[\left.f\,\right|\, x\right]}{1-g\left(x-\tilde{\tau}_{f}\right)} & =1,\,\,\,\,\forall f\in C_{c}^{+}\left(\mathbb{R}\right)\cup\left\{ \infty\cdot\mathbf{1}_{\left(0,\infty\right)}\right\} \label{eq:21-1-1}\\
\lim_{x\rightarrow\infty}\frac{1-g\left(x+y\right)}{1-g\left(x\right)} & =e^{-cy},\,\,\mbox{and}\label{eq:21-1-2}\\
\lim_{t\rightarrow0}\tilde{\tau}_{tf} & =\infty,\,\,\,\,\forall f\in C_{c}^{+}\left(\mathbb{R}\right).\label{eq:z}
\end{align}

\end{assm}

Since we require (\ref{eq:21-1-1}) to hold with $f=\infty\cdot\mathbf{1}_{\left(0,\infty\right)}$,
we remark that
\[
L_{\xi}\left[\left.\infty\cdot\mathbf{1}_{\left(0,\infty\right)}\,\right|\, x\right]=\mathbb{E}\left\{ \exp\left(-\left\langle \infty\cdot\mathbf{1}_{\left(x,\infty\right)},\xi\right\rangle \right)\right\} =\mathbb{P}\left\{ \mathcal{M}\left(\xi\right)\leq x\right\} .
\]
We write $\tilde{\tau}_{\mathcal{M}}$ for $\tilde{\tau}_{\infty\cdot\mathbf{1}_{\left(0,\infty\right)}}$.
\begin{rem}
\label{rk:Assumptions}By Lemma \ref{lem:maxFin}, Assumption \ref{Assumption:regularity-1}
implies Assumption \ref{Assumption:regularity_grl_proc} with $\tilde{\tau}_{f}^{g}\left(\xi\right)=\tau_{f}^{g}\left(\xi\right)$,
the shifts corresponding to the uniqueness of the support. Equation
(\ref{eq:21-1-1}) follows by definition and (\ref{eq:z}) follows
by the monotone convergence theorem.
\end{rem}
We now define the first of the point processes.
\begin{defn*}
[PP1]Given a point process $\xi$, let $\xi^{\left(y\right)}$ denote
the point process
\[
\theta_{-y}\left(\left.\xi\right|_{\left(y,\infty\right)}\right)\left(\cdot\right)=\xi\left(\left(\cdot+y\right)\cap\left(y,\infty\right)\right)
\]
conditioned on $\left\{ \xi\left(\left(y,\infty\right)\right)>0\right\} $.
\end{defn*}

\begin{defn*}
[PP2] Suppose Assumption \ref{Assumption:regularity_grl_proc} holds.
Define the point process $\xi^{\mapsto}$ as the limit in distribution
of $\xi^{\left(y\right)}$ as $y\rightarrow\infty$.

We show that the definition makes sense in the following.\end{defn*}
\begin{lem}
\label{lem:right_tail_process}Under Assumption \ref{Assumption:regularity_grl_proc},
$\xi^{\mapsto}$ as in Definition PP2 exists. Further, for any $f\in C_{c}^{+}\left(\mathbb{R}\right)$
whose support is contained in $\left(0,\infty\right)$ and any $y\geq0$,
\[
L_{\xi^{\mapsto}}\left[\left.f\,\right|\, y\right]=1-e^{-c\left(y+\tilde{\tau}_{\mathcal{M}}-\tilde{\tau}_{f}\right)}.
\]
\end{lem}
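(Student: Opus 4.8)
The plan is to compute the shift-Laplace functional of each finite-$y$ process $\xi^{(y)}$ explicitly, using the hypotheses of Assumption \ref{Assumption:regularity_grl_proc}, and then pass to the limit. For $f\in C_{c}^{+}(\mathbb{R})$ with support in $(0,\infty)$ and $y\ge 0$, the event $\{\xi((y,\infty))>0\}$ coincides with $\{\langle\theta_{y}(\infty\cdot\mathbf{1}_{(0,\infty)}),\xi\rangle>0\}$, so by the definition of PP1,
\[
L_{\xi^{(y)}}\left[\left.f\,\right|\,z\right]
=\frac{\mathbb{E}\left\{\exp\left(-\langle\theta_{y+z}f,\xi\rangle\right);\,\xi((y,\infty))>0\right\}}{\mathbb{P}\left\{\xi((y,\infty))>0\right\}}.
\]
Since $f$ is supported on $(0,\infty)$ and $z\ge 0$, the function $\theta_{y+z}f$ is supported on $(y,\infty)$, so on the complementary event $\{\xi((y,\infty))=0\}$ the integral $\langle\theta_{y+z}f,\xi\rangle$ vanishes and $\exp(-\langle\theta_{y+z}f,\xi\rangle)=1$. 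Hence the numerator equals $L_{\xi}[\,f\,|\,y+z\,]-\mathbb{P}\{\xi((y,\infty))=0\}=L_{\xi}[\,f\,|\,y+z\,]-L_{\xi}[\,\infty\cdot\mathbf{1}_{(0,\infty)}\,|\,y\,]$, and the denominator is $1-L_{\xi}[\,\infty\cdot\mathbf{1}_{(0,\infty)}\,|\,y\,]$. Therefore
\[
L_{\xi^{(y)}}\left[\left.f\,\right|\,z\right]
=1-\frac{1-L_{\xi}\left[\left.f\,\right|\,y+z\right]}{1-L_{\xi}\left[\left.\infty\cdot\mathbf{1}_{(0,\infty)}\,\right|\,y\right]}.
\]

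Next I would take $y\to\infty$ in this identity for fixed $z\ge0$. Using \eqref{eq:21-1-1} for $f$ and for $\infty\cdot\mathbf{1}_{(0,\infty)}$, the numerator is asymptotic to $1-g(y+z-\tilde\tau_{f})$ and the denominator to $1-g(y-\tilde\tau_{\mathcal{M}})$; dividing and applying \eqref{eq:21-1-2} twice (once to shift the argument of $g$ from $y+z-\tilde\tau_f$ down to $y$, once for $y-\tilde\tau_{\mathcal M}$ to $y$) gives
\[
\lim_{y\to\infty}\frac{1-g(y+z-\tilde\tau_{f})}{1-g(y-\tilde\tau_{\mathcal{M}})}
=e^{-c(z-\tilde\tau_f)}\Big/e^{-c(-\tilde\tau_{\mathcal{M}})}
=e^{-c(z+\tilde\tau_{\mathcal{M}}-\tilde\tau_{f})}.
\]
So $L_{\xi^{(y)}}[\,f\,|\,z\,]\to 1-e^{-c(z+\tilde\tau_{\mathcal{M}}-\tilde\tau_{f})}$ for every such $f$ and every $z\ge0$. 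To conclude that $\xi^{(y)}$ converges in distribution to a well-defined point process $\xi^{\mapsto}$ with this Laplace functional, I would invoke the standard Laplace-functional criterion for convergence in distribution of point processes (e.g. Kallenberg): it suffices to have convergence of $L_{\xi^{(y)}}[\,f\,]$ for all $f\in C_c^+(\mathbb R)$, together with tightness. Tightness follows from control of $\mathbb{P}\{\xi^{(y)}((-\infty,M))>0\}$, equivalently $L_{\xi^{(y)}}[\,\infty\cdot\mathbf{1}_{(-\infty,M)}\,|\,0\,]$, which one handles by the same computation applied to indicator test functions (using monotone approximation by $C_c^+$ functions as in Corollary \ref{cor:frz_specific_func}); since $\xi^{\mapsto}$ should be supported on $(0,\infty)$ this is where the assumption that $f$ has support in $(0,\infty)$, and the normalization by the maximum, pay off.

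The main obstacle is the passage to the limit itself: the finite-$y$ formula expresses $L_{\xi^{(y)}}$ as a ratio of two quantities each tending to $1$, so one cannot simply take limits termwise, and the regular-variation hypotheses \eqref{eq:21-1-1}–\eqref{eq:21-1-2} must be combined carefully (including uniformity in the shift of $g$) to extract the correct exponential limit; one must also be careful that the limiting functional is only claimed for test functions supported on $(0,\infty)$ and $y\ge 0$, so no claim is made — and none is needed here — about $\xi^{\mapsto}$ on the rest of the line beyond what tightness provides. A secondary point to verify is that the limiting expression $1-e^{-c(z+\tilde\tau_{\mathcal{M}}-\tilde\tau_{f})}$ is genuinely a Laplace functional (it lies in $[0,1]$ for $z\ge0$ because $\tilde\tau_{\mathcal M}\ge\tilde\tau_f$, which follows from \eqref{eq:21-1-1} since $f\le\infty\cdot\mathbf 1_{(0,\infty)}$ so $L_\xi[f\,|\,x]\ge L_\xi[\infty\cdot\mathbf 1_{(0,\infty)}\,|\,x]$ and $g$ is increasing), and that the resulting family of values is consistent under the operations ($f\mapsto tf$, addition of test functions) that pin down a point process law; the role of hypothesis \eqref{eq:z} is precisely to make the later reconstruction of $D(\xi)$ coherent, though for the present lemma it is not strictly needed.
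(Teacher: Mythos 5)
Your computation of the finite-$y$ shift-Laplace functional of $\xi^{(y)}$ and of its limit is exactly the paper's: writing the conditional expectation as $1-\bigl(1-L_{\xi}[\,f\,|\,y+z\,]\bigr)/\bigl(1-L_{\xi}[\,\infty\cdot\mathbf{1}_{(0,\infty)}\,|\,y\,]\bigr)$ and invoking \eqref{eq:21-1-1}--\eqref{eq:21-1-2} gives $1-e^{-c(z+\tilde{\tau}_{\mathcal{M}}-\tilde{\tau}_{f})}$, and this part is fine (in the paper it is done first for $z=0$ and then for shifts via $\tilde{\tau}_{\theta_{x}f}=\tilde{\tau}_{f}-x$).

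The gap is in the existence step, i.e.\ tightness. Tightness for convergence in distribution in the vague topology is the condition \eqref{eq:30-1}: one must show $\lim_{t\to\infty}\limsup_{n\to\infty}\mathbb{P}\{\xi^{(y_{n})}(B)>t\}=0$ for bounded Borel $B$, i.e.\ rule out that the \emph{number} of points in a bounded interval (in particular near $0{+}$) blows up along the limit. Controlling $\mathbb{P}\{\xi^{(y)}((-\infty,M))>0\}$, as you propose, says nothing about multiplicities and does not address this; convergence of the Laplace functionals alone is also not enough, since a priori the limiting expression need not be the Laplace functional of any locally finite point process (compare the discussion and example in Section \ref{sec:Relation-to-Freezing}). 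The paper handles this by fixing $f_{0}\in C_{c}^{+}(\mathbb{R})$ supported in $(0,\infty)$ with $f_{0}\geq\mathbf{1}_{(1/T,T)}$, bounding $q_{t}(y)\bigl(1-e^{-tm}\bigr)\leq 1-L_{\xi^{(y)}}[\,mf_{0}\,]$ where $q_{t}(y)=\mathbb{P}\{\xi^{(y)}((1/T,T))>t\}$, letting $t\to\infty$ to get the bound $e^{-c(\tilde{\tau}_{\mathcal{M}}-\tilde{\tau}_{mf_{0}})}$, and then letting $m\to0$; it is precisely hypothesis \eqref{eq:z}, $\tilde{\tau}_{mf_{0}}\to\infty$ as $m\to0$, that forces this bound to $0$ (plus a small shift-by-$1/T$ argument to reduce intervals touching $0$ to intervals $(1/T,T)$). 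So your closing claim that \eqref{eq:z} ``is not strictly needed'' for this lemma is the opposite of its role here: it is the one assumption that makes tightness work, and without it, or some substitute argument which you do not supply, the existence of $\xi^{\mapsto}$ is not established.
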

\begin{proof}
Let $f\in C_{c}^{+}\left(\mathbb{R}\right)$ be a function whose support
is contained in $\left(0,\infty\right)$ and let $\tilde{\tau}_{f}\in\mathbb{R}$
be the shift from Assumption \ref{Assumption:regularity_grl_proc}.
Then
\begin{align}
 & \negthickspace\negthickspace\negthickspace\lim_{y\rightarrow\infty}L_{\xi^{\left(y\right)}}\left[f\right]=\lim_{y\rightarrow\infty}\mathbb{E}\exp\left\{ -\left\langle f,\xi^{\left(y\right)}\right\rangle \right\} \nonumber \\
 & =\lim_{y\rightarrow\infty}\frac{\mathbb{E}\left[\exp\left\{ -\left\langle \theta_{y}f,\xi\right\rangle \right\} \,;\,\mathcal{M}\left(\xi\right)>y\right]}{\mathbb{P}\left[\mathcal{M}\left(\xi\right)>y\right]}\nonumber \\
 & =\lim_{y\rightarrow\infty}\frac{\mathbb{E}\left[\exp\left\{ -\left\langle \theta_{y}f,\xi\right\rangle \right\} \right]-\mathbb{P}\left[\mathcal{M}\left(\xi\right)\leq y\right]}{1-\mathbb{P}\left[\mathcal{M}\left(\xi\right)\leq y\right]}\nonumber \\
 & =\lim_{y\rightarrow\infty}\frac{L_{\xi}\left[\left.f\,\right|\, y\right]-L_{\xi}\left[\left.\infty\cdot\mathbf{1}_{\left(0,\infty\right)}\,\right|\, y\right]}{1-L_{\xi}\left[\left.\infty\cdot\mathbf{1}_{\left(0,\infty\right)}\,\right|\, y\right]}\nonumber \\
 & =1-\lim_{y\rightarrow\infty}\frac{1-L_{\xi}\left[\left.f\,\right|\, y\right]}{1-L_{\xi}\left[\left.\infty\cdot\mathbf{1}_{\left(0,\infty\right)}\,\right|\, y\right]}\nonumber \\
 & =1-\lim_{y\rightarrow\infty}\frac{1-g\left(y-\tilde{\tau}_{f}\right)}{1-g\left(y-\tilde{\tau}_{\mathcal{M}}\right)}=1-e^{-c\left(\tilde{\tau}_{\mathcal{M}}-\tilde{\tau}_{f}\right)}.\label{eq:14}
\end{align}

Since the point process $\xi^{\left(y\right)}$ is supported on $\left(0,\infty\right)$,
this shows that the Laplace functional $L_{\xi^{\left(y\right)}}\left[f\right]$
converges for any $f\in C_{c}^{+}\left(\mathbb{R}\right)$.

It is easy to verify that $\tilde{\tau}_{\theta_{x}f}=\tilde{\tau}_{f}-x$
for any $f\in C_{c}^{+}\left(\mathbb{R}\right)$. Therefore, for any
$f\in C_{c}^{+}\left(\mathbb{R}\right)$ whose support is contained
in $\left(0,\infty\right)$ and for any $x\geq0$,
\[
\lim_{y\rightarrow\infty}\mathbb{E}\exp\left\{ -\left\langle \theta_{x}f,\xi^{\left(y\right)}\right\rangle \right\} =1-\lim_{y\rightarrow\infty}\frac{1-g\left(y-\tilde{\tau}_{f}\right)}{1-g\left(y-\tilde{\tau}_{\mathcal{M}}\right)}=1-e^{-c\left(x+\tilde{\tau}_{\mathcal{M}}-\tilde{\tau}_{f}\right)}.
\]
Thus all that remains is to prove that for any sequence $y_{n}\in\mathbb{R}$,
$n\geq1$, such that $y_{n}\rightarrow\infty$ as $n\rightarrow\infty$,
the corresponding sequence of point processes $\xi^{\left(y_{n}\right)}$
is tight, which will imply that $\xi^{\left(y\right)}$ converges
in distribution as $y\rightarrow\infty$.

Note that, by definition, for any $y>0$, $T>1$,
\[
\mathbb{P}\left\{ \xi^{\left(y\right)}\left(\left(0,T-1/T\right)\right)>t\right\} =\frac{\mathbb{P}\left\{ \xi^{\left(y-1/T\right)}\left(\left(1/T,T\right)\right)>t\right\} }{\mathbb{P}\left\{ \xi^{\left(y-1/T\right)}\left(\left(1/T,\infty\right)\right)>0\right\} },
\]
and, using Assumption \ref{Assumption:regularity_grl_proc},
\begin{align*}
 & \negthickspace\negthickspace\negthickspace\mathbb{P}\left\{ \xi^{\left(y-1/T\right)}\left(\left(1/T,\infty\right)\right)>0\right\} =\frac{\mathbb{P}\left\{ \mathcal{M}\left(\xi\right)>y\right\} }{\mathbb{P}\left\{ \mathcal{M}\left(\xi\right)>y-1/T\right\} }\\
 & \frac{1-L_{\xi}\left[\left.\infty\cdot\mathbf{1}_{\left(0,\infty\right)}\,\right|\, y\right]}{1-L_{\xi}\left[\left.\infty\cdot\mathbf{1}_{\left(0,\infty\right)}\,\right|\, y-1/T\right]}\overset{y\rightarrow\infty}{\longrightarrow}\lim_{y\rightarrow\infty}\frac{1-g\left(y-\tau_{\mathcal{M}}\right)}{1-g\left(y-1/T-\tau_{\mathcal{M}}\right)}=e^{-c/T}.
\end{align*}
Therefore, for any sequence $\xi^{\left(y_{n}\right)}$, $n\geq1$,
as above,
\[
\lim_{t\rightarrow\infty}\limsup_{n\rightarrow\infty}\mathbb{P}\left\{ \xi^{\left(y_{n}\right)}\left(\left(0,T-1/T\right)\right)>t\right\} =e^{c/T}\cdot\lim_{t\rightarrow\infty}\limsup_{n\rightarrow\infty}\mathbb{P}\left\{ \xi^{\left(y_{n}-1/T\right)}\left(\left(1/T,T\right)\right)>t\right\} .
\]

Hence, since $\xi^{\left(y_{n}\right)}$ is supported on $\left(0,\infty\right)$,
it will be sufficient to show that (\ref{eq:30-1}) holds for any
sequence $\xi^{\left(y_{n}\right)}$, $n\geq1$, as above, with $B=\left(1/T,T\right)$
for any $T>0$.

Fix a sequence $\xi^{\left(y_{n}\right)}$ as above, fix some $T>1$,
and fix some function $f_{0}\in C_{c}^{+}\left(\mathbb{R}\right)$
with support in $\left(0,\infty\right)$ such that $f_{0}\left(x\right)\geq\mathbf{1}_{\left(1/T,T\right)}\left(x\right)$,
for any $x\in\mathbb{R}$. For any $m>0$, abbreviate $\tilde{\tau}_{mf_{0}}$
of Assumption \ref{Assumption:regularity_grl_proc} to $\tilde{\tau}_{m}$.

Note that, denoting $q_{t}\left(y\right)\triangleq\mathbb{P}\left\{ \xi^{\left(y\right)}\left(\left(1/T,T\right)\right)>t\right\} $,
for any $y$, $m$, $t>0$,
\[
\mathbb{E}\left[\exp\left\{ -\left\langle m\cdot\mathbf{1}_{\left(1/T,T\right)},\xi^{\left(y\right)}\right\rangle \right\} \right]\leq q_{t}\left(y\right)e^{-tm}+\left(1-q_{t}\left(y\right)\right)
\]
and therefore
\begin{align*}
q_{t}\left(y\right)\left(1-e^{-tm}\right) & \leq1-\mathbb{E}\left[\exp\left\{ -\left\langle m\cdot\mathbf{1}_{\left(1/T,T\right)},\xi^{\left(y\right)}\right\rangle \right\} \right]\\
 & \leq1-\mathbb{E}\left[\exp\left\{ -\left\langle mf_{0},\xi^{\left(y\right)}\right\rangle \right\} \right].
\end{align*}

Thus, for any $m>0$,
\[
\lim_{t\rightarrow\infty}\limsup_{n\rightarrow\infty}q_{t}\left(y_{n}\right)=\lim_{t\rightarrow\infty}\limsup_{n\rightarrow\infty}q_{t}\left(y_{n}\right)\left(1-e^{-tm}\right)\leq e^{-c\left(\tilde{\tau}_{\mathcal{M}}-\tilde{\tau}_{m}\right)},
\]
and therefore, by (\ref{eq:z}),
\[
\lim_{t\rightarrow\infty}\limsup_{n\rightarrow\infty}q_{t}\left(y_{n}\right)=0,
\]
which proves tightness.\end{proof}
\begin{cor}
\label{cor:max_of_tail}Under Assumption \ref{Assumption:regularity_grl_proc}
the maximum of $\xi^{\mapsto}$, $\mathcal{M}\left(\xi^{\mapsto}\right)$,
is an exponential random variable with parameter $c$.\end{cor}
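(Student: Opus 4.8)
The proof rests on two facts. The first is an explicit identification of the weak limit of the maxima of the pre-limit processes. Since $L_{\xi}\left[\left.\infty\cdot\mathbf{1}_{(0,\infty)}\,\right|\,x\right]=\mathbb{P}\{\mathcal{M}(\xi)\le x\}$, equations \eqref{eq:21-1-1} and \eqref{eq:21-1-2} give that $g(x)\to1$ as $x\to\infty$ (so that $\mathcal{M}(\xi)<\infty$ almost surely and $\mathbb{P}\{\mathcal{M}(\xi)>y\}>0$ for every $y$) and that $\mathbb{P}\{\mathcal{M}(\xi)>x\}\sim1-g(x-\tilde{\tau}_{\mathcal{M}})$, whence $\mathbb{P}\{\mathcal{M}(\xi)>y+s\}/\mathbb{P}\{\mathcal{M}(\xi)>y\}\to e^{-cs}$ as $y\to\infty$. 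Since by the definition of $\xi^{(y)}$ one has, for $s\ge0$,
\[
\mathbb{P}\{\mathcal{M}(\xi^{(y)})\le s\}=\frac{\mathbb{P}\{y<\mathcal{M}(\xi)\le y+s\}}{\mathbb{P}\{\mathcal{M}(\xi)>y\}}=1-\frac{\mathbb{P}\{\mathcal{M}(\xi)>y+s\}}{\mathbb{P}\{\mathcal{M}(\xi)>y\}},
\]
it follows that $\mathcal{M}(\xi^{(y)})$ converges in distribution to an exponential law of parameter $c$. The second fact is the weak convergence $\xi^{(y)}\Rightarrow\xi^{\mapsto}$ of Lemma~\ref{lem:right_tail_process}. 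The remaining task is to transfer the first fact to $\mathcal{M}(\xi^{\mapsto})$, the subtlety being that $\mathcal{M}$ is not a continuous functional on the space of point measures.

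To do this I would fix a sequence $y_{n}\uparrow\infty$ and look at the pairs $\bigl(\xi^{(y_{n})},\mathcal{M}(\xi^{(y_{n})})\bigr)$ in $\mathcal{N}\times[0,\infty)$, where $\mathcal{N}$ is the (Polish) space of boundedly finite point measures with the vague topology. The first coordinates converge in distribution to $\xi^{\mapsto}$ and the second coordinates are tight by the previous paragraph, so the pairs are tight; along a subsequence they converge in distribution to a pair $(\xi^{\mapsto},M)$ with $M$ exponential of parameter $c$, and by the Skorokhod representation theorem we may realize everything on one probability space with $\xi^{(y_{n_{k}})}\to\xi^{\mapsto}$ vaguely and $\mathcal{M}(\xi^{(y_{n_{k}})})\to M$ almost surely. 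On this space, writing $m_{k}=\mathcal{M}(\xi^{(y_{n_{k}})})$: for any $\varepsilon>0$ the atom of $\xi^{(y_{n_{k}})}$ at $m_{k}$ lies in $[M-\varepsilon,M+\varepsilon]$ for all large $k$, so $\xi^{\mapsto}([M-\varepsilon,M+\varepsilon])\ge\limsup_{k}\xi^{(y_{n_{k}})}([M-\varepsilon,M+\varepsilon])\ge1$ (the portmanteau inequality for vague convergence applied to the compact set $[M-\varepsilon,M+\varepsilon]$), and letting $\varepsilon\downarrow0$ gives $\xi^{\mapsto}(\{M\})\ge1$, hence $\mathcal{M}(\xi^{\mapsto})\ge M$; conversely, if $\xi^{\mapsto}$ had a point $z>M$, then choosing $\varepsilon>0$ with $z-\varepsilon>M$ one gets $\liminf_{k}\xi^{(y_{n_{k}})}((z-\varepsilon,z+\varepsilon))\ge\xi^{\mapsto}((z-\varepsilon,z+\varepsilon))\ge1$ (the portmanteau inequality for the relatively compact open set $(z-\varepsilon,z+\varepsilon)$), so $m_{k}>M$ for infinitely many $k$, contradicting $m_{k}\to M$, and therefore $\mathcal{M}(\xi^{\mapsto})\le M$. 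Thus $\mathcal{M}(\xi^{\mapsto})=M$ almost surely, and since the law of $\mathcal{M}(\xi^{\mapsto})$ does not depend on the coupling, $\mathcal{M}(\xi^{\mapsto})$ is exponential of parameter $c$.

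The only real obstacle is the failure of continuity of $\mathcal{M}$: it is merely upper semicontinuous, and even that breaks down when mass escapes to $+\infty$ along a vaguely convergent sequence. What makes the argument go through is precisely that the tightness of $\mathcal{M}(\xi^{(y)})$ obtained in the first paragraph forbids such escape, after which the two portmanteau inequalities determine $\mathcal{M}(\xi^{\mapsto})$ exactly. One could also avoid the Skorokhod coupling by bounding $\mathbb{P}\{\xi^{\mapsto}((s,N])\ge1\}$ above by $\lim_{k}\mathbb{P}\{\mathcal{M}(\xi^{(y_{n_{k}})})>s\}=e^{-cs}$ for $s$ and $N$ outside the at most countably many atoms of the intensity measure of $\xi^{\mapsto}$, obtaining a matching lower bound in the same way and concluding by right-continuity of distribution functions; but this needs local finiteness of that intensity, so I would prefer the Skorokhod route.
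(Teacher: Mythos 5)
Your proof is correct, but it takes a genuinely different route from the paper's. The paper disposes of this corollary in two lines: Lemma \ref{lem:right_tail_process} already gives the explicit formula $L_{\xi^{\mapsto}}\left[\left.f\,\right|\,y\right]=1-e^{-c\left(y+\tilde{\tau}_{\mathcal{M}}-\tilde{\tau}_{f}\right)}$ for $f\in C_{c}^{+}\left(\mathbb{R}\right)$ supported in $\left(0,\infty\right)$, and one simply approximates $\infty\cdot\mathbf{1}_{\left(0,\infty\right)}$ from below by such functions, so that monotone convergence yields $\mathbb{P}\left[\mathcal{M}\left(\xi^{\mapsto}\right)\leq y\right]=1-e^{-cy}$ directly from the Laplace functional of the limit process. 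You instead work at the pre-limit level: you identify the law of $\mathcal{M}\left(\xi^{(y)}\right)$ (the conditional law of $\mathcal{M}\left(\xi\right)-y$ given $\mathcal{M}\left(\xi\right)>y$) and show via \eqref{eq:21-1-1}--\eqref{eq:21-1-2} that it converges to the exponential law, then transfer this to $\mathcal{M}\left(\xi^{\mapsto}\right)$ through the vague convergence by tightness of the pairs, Skorokhod representation, and the two portmanteau inequalities, which is exactly the right way to handle the fact that $\mathcal{M}$ is not vaguely continuous; the tightness of the maxima is what rules out escape of the top particle to $+\infty$. Your argument buys slightly more than the corollary (it shows $\mathcal{M}\left(\xi^{(y)}\right)\Rightarrow\mathcal{M}\left(\xi^{\mapsto}\right)$, i.e.\ that the maximum passes to the limit here), at the cost of a coupling argument the paper avoids entirely by exploiting the already-computed shift-Laplace functional of $\xi^{\mapsto}$ and the identity $L_{\xi^{\mapsto}}\left[\left.\infty\cdot\mathbf{1}_{\left(0,\infty\right)}\,\right|\,y\right]=\mathbb{P}\left[\mathcal{M}\left(\xi^{\mapsto}\right)\leq y\right]$. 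One small wording slip in your second portmanteau step: ``$m_{k}>M$ for infinitely many $k$'' does not by itself contradict $m_{k}\to M$; what your bound actually gives is $m_{k}>z-\varepsilon$ for all large $k$ with the fixed gap $z-\varepsilon>M$, hence $\liminf_{k}m_{k}\geq z-\varepsilon>M$, which is the contradiction you want --- the substance is there, only the phrasing should be tightened.
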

\begin{proof}
Approximating $\infty\cdot\mathbf{1}_{\left(0,\infty\right)}$ by
an increasing sequence of functions $f_{n}\in C_{c}^{+}\left(\mathbb{R}\right)$
whose support is contained in $\left(0,\infty\right)$ easily yields
\[
\mathbb{P}\left[\mathcal{M}\left(\xi^{\mapsto}\right)\leq y\right]=L_{\xi^{\mapsto}}\left[\left.\infty\cdot\mathbf{1}_{\left(0,\infty\right)}\,\right|\, y\right]=1-e^{-c\left(y+\tilde{\tau}_{\mathcal{M}}-\tilde{\tau}_{\mathcal{M}}\right)}=1-e^{-cy}.
\]
\end{proof}
\begin{defn*}
[PP3] Suppose Assumption \ref{Assumption:regularity_grl_proc} holds.
For any $y>0$ define $\xi_{\left(y\right)}$ to be the point process
 $\theta_{-y}\xi^{\mapsto}$ conditioned on $\left\{ \xi^{\mapsto}\left(\left(y,\infty\right)\right)>0\right\} $
($\xi_{\left(y\right)}$ is supported on $\left(-y,\infty\right)$).\end{defn*}
\begin{lem}
\label{lem:19}Under Assumption \ref{Assumption:regularity_grl_proc},
for any $0\leq y,\, t$ and any $f\in C_{c}^{+}\left(\mathbb{R}\right)$
whose support is contained in $\left(0,\infty\right)$,
\[
L_{\xi_{\left(y\right)}}\left[f\right]=1-e^{-c\left(\tilde{\tau}_{\mathcal{M}}-\tilde{\tau}_{f}\right)}\mbox{\,\, and\,\,}L_{\xi_{\left(y+t\right)}}\left[\theta_{-y}f\right]\mbox{ is independent of }t.
\]
Thus, in particular, for any $0\leq y,\, t$,
\begin{equation}
\xi_{\left(y\right)}\overset{d}{=}\left.\xi_{\left(y+t\right)}\right|_{\left(-y,\infty\right)}.\label{eq:26}
\end{equation}
\end{lem}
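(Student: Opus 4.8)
The plan is to establish the two asserted facts in turn and then read off \eqref{eq:26} from them, the workhorse being a self-similarity (memorylessness) property of $\xi^{\mapsto}$ that follows from the explicit formulas already proved in Lemma \ref{lem:right_tail_process} and Corollary \ref{cor:max_of_tail}.

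First I would observe that, directly from Definitions PP1 and PP3, the restriction $\xi_{(y)}|_{(0,\infty)}$ coincides with the process $\left(\xi^{\mapsto}\right)^{(y)}$ obtained by applying the construction of PP1 to $\xi^{\mapsto}$ itself: both are $\theta_{-y}\bigl(\xi^{\mapsto}|_{(y,\infty)}\bigr)$ conditioned on $\{\xi^{\mapsto}((y,\infty))>0\}=\{\mathcal{M}(\xi^{\mapsto})>y\}$, and this event has probability $e^{-cy}>0$ by Corollary \ref{cor:max_of_tail}, so the conditioning makes sense. Then, for $f\in C_c^+(\mathbb{R})$ supported in $(0,\infty)$, I would compute $L_{\xi_{(y)}}[f]=L_{\left(\xi^{\mapsto}\right)^{(y)}}[f]$ as $\bigl(L_{\xi^{\mapsto}}[f\,|\,y]-\mathbb{P}\{\mathcal{M}(\xi^{\mapsto})\le y\}\bigr)/\bigl(1-\mathbb{P}\{\mathcal{M}(\xi^{\mapsto})\le y\}\bigr)$, using that $\theta_y f$ is supported in $(y,\infty)$ so that $\langle\theta_y f,\xi^{\mapsto}\rangle=0$ on $\{\mathcal{M}(\xi^{\mapsto})\le y\}$; substituting $L_{\xi^{\mapsto}}[f\,|\,y]=1-e^{-c(y+\tilde{\tau}_{\mathcal{M}}-\tilde{\tau}_f)}$ from Lemma \ref{lem:right_tail_process} and $\mathbb{P}\{\mathcal{M}(\xi^{\mapsto})\le y\}=1-e^{-cy}$ from Corollary \ref{cor:max_of_tail}, the factor $e^{-cy}$ cancels and the answer is $1-e^{-c(\tilde{\tau}_{\mathcal{M}}-\tilde{\tau}_f)}$, which is the first asserted identity and which notably does not depend on $y$.

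Running the same computation with $y$ replaced by an arbitrary $s\ge 0$ gives $L_{\left(\xi^{\mapsto}\right)^{(s)}}[f]=1-e^{-c(\tilde{\tau}_{\mathcal{M}}-\tilde{\tau}_f)}=L_{\xi^{\mapsto}}[f\,|\,0]$ for all $f\in C_c^+(\mathbb{R})$ supported in $(0,\infty)$; since both $\left(\xi^{\mapsto}\right)^{(s)}$ and $\xi^{\mapsto}$ are supported on $(0,\infty)$ and the law of a point process there is determined by its Laplace functional on such $f$, I obtain $\left(\xi^{\mapsto}\right)^{(s)}\overset{d}{=}\xi^{\mapsto}$ for every $s\ge 0$, i.e.\ conditionally on $\{\mathcal{M}(\xi^{\mapsto})>s\}$ the shifted right tail $\theta_{-s}\bigl(\xi^{\mapsto}|_{(s,\infty)}\bigr)$ is again distributed as $\xi^{\mapsto}$. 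For \eqref{eq:26} and the $t$-independence I would then note that, because $\xi^{\mapsto}$ lives on $(0,\infty)$, a point of $\theta_{-(y+t)}\xi^{\mapsto}$ lies in $(-y,\infty)$ exactly when the corresponding point of $\xi^{\mapsto}$ lies in $(t,\infty)$, so $\xi_{(y+t)}|_{(-y,\infty)}$ is the conditional law, given $\{\mathcal{M}(\xi^{\mapsto})>y+t\}$, of $\theta_{-y}\zeta$ with $\zeta:=\theta_{-t}\bigl(\xi^{\mapsto}|_{(t,\infty)}\bigr)$; since $y\ge 0$, this conditioning event is contained in $\{\mathcal{M}(\xi^{\mapsto})>t\}$, on which $\mathcal{M}(\zeta)=\mathcal{M}(\xi^{\mapsto})-t$, so after conditioning first on $\{\mathcal{M}(\xi^{\mapsto})>t\}$ — where $\zeta\overset{d}{=}\xi^{\mapsto}$ by the memorylessness — the residual conditioning is precisely $\{\mathcal{M}(\zeta)>y\}$, and the resulting law is that of $\theta_{-y}\xi^{\mapsto}$ conditioned on $\{\mathcal{M}(\xi^{\mapsto})>y\}$, namely $\xi_{(y)}$. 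This is \eqref{eq:26}, and since $\theta_{-y}f$ is supported in $(-y,\infty)$, where $\xi_{(y+t)}$ and $\xi_{(y)}$ now agree in law, it follows at once that $L_{\xi_{(y+t)}}[\theta_{-y}f]=L_{\xi_{(y)}}[\theta_{-y}f]$, independent of $t$.

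The only step I expect to require real care is the last one: correctly exploiting the self-similarity of $\xi^{\mapsto}$ and organizing the nested conditioning $\{\mathcal{M}(\xi^{\mapsto})>y+t\}\subseteq\{\mathcal{M}(\xi^{\mapsto})>t\}$ so that the memorylessness property applies cleanly (one should also record, as above, that $\left(\xi^{\mapsto}\right)^{(s)}$ is well defined since $\mathbb{P}\{\mathcal{M}(\xi^{\mapsto})>s\}=e^{-cs}>0$); the first two steps are short manipulations of the two explicit formulas already available.
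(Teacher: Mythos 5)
Your argument is correct, and it uses exactly the same two ingredients as the paper (the formula $L_{\xi^{\mapsto}}\left[\left.f\,\right|\,y\right]=1-e^{-c\left(y+\tilde{\tau}_{\mathcal{M}}-\tilde{\tau}_{f}\right)}$ of Lemma \ref{lem:right_tail_process} and $\mathbb{P}\left\{ \mathcal{M}\left(\xi^{\mapsto}\right)\leq y\right\} =1-e^{-cy}$ of Corollary \ref{cor:max_of_tail}); your computation of $L_{\xi_{\left(y\right)}}\left[f\right]$ is the paper's verbatim. Where you diverge is in the second half. The paper first records \eqref{eq:25} (the $y$-independence of the law of $\xi_{\left(y\right)}$ restricted to $\left(0,\infty\right)$), then performs a second explicit Laplace-functional computation, writing $L_{\xi_{\left(y+t\right)}}\left[\theta_{-t}f\right]=e^{ct}\,\mathbb{E}\left[\exp\left\{ -\left\langle f,\theta_{-y}\xi^{\mapsto}\right\rangle \right\} \mathbf{1}_{\left\{ \theta_{-y}\xi^{\mapsto}\left(\left(t,\infty\right)\right)>0\right\} }\,\middle|\,\mathcal{M}\left(\xi^{\mapsto}\right)>y\right]$ and invoking \eqref{eq:25} to see this is constant, from which \eqref{eq:26} follows by Laplace-functional determinacy. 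You instead promote \eqref{eq:25} (at $y=0$) to the cleaner memorylessness statement $\left(\xi^{\mapsto}\right)^{\left(s\right)}\overset{d}{=}\xi^{\mapsto}$, prove \eqref{eq:26} directly by the nested-conditioning identity $\left\{ \mathcal{M}\left(\xi^{\mapsto}\right)>y+t\right\} =\left\{ \mathcal{M}\left(\xi^{\mapsto}\right)>t\right\} \cap\left\{ \mathcal{M}\left(\zeta\right)>y\right\} $ (valid since $y\geq0$ and both conditioning events have probability $e^{-c\left(y+t\right)}>0$), and only then read off the $t$-independence of $L_{\xi_{\left(y+t\right)}}\left[\theta_{-y}f\right]$ as a corollary — i.e., you invert the paper's logical order. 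What your route buys is conceptual transparency (the self-similarity of the right tail is isolated as a statement in its own right, and no second Laplace computation with the $e^{ct}$ factor is needed); what the paper's route buys is that everything stays at the level of the two explicit formulas. One shared caveat, not a gap specific to you: both arguments use that $\xi^{\mapsto}$ charges only $\left(0,\infty\right)$ (so that laws are pinned down by test functions supported there, and $\xi_{\left(y\right)}$ is supported on $\left(-y,\infty\right)$); the paper likewise takes this from the construction, stating it parenthetically in Definition PP3.
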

\begin{proof}
By Lemma \ref{lem:right_tail_process} and Corollary \ref{cor:max_of_tail},
for any $0\leq y$ and any $f\in C_{c}^{+}\left(\mathbb{R}\right)$
whose support is contained in $\left(0,\infty\right)$,
\begin{align*}
L_{\xi_{\left(y\right)}}\left[f\right] & =\mathbb{E}\left[\left.\exp\left\{ -\left\langle \theta_{y}f,\xi^{\mapsto}\right\rangle \right\} \,\right|\,\mathcal{M}\left(\xi^{\mapsto}\right)>y\right]\\
 & =\frac{\mathbb{E}\left[\exp\left\{ -\left\langle \theta_{y}f,\xi^{\mapsto}\right\rangle \right\} \,;\,\mathcal{M}\left(\xi^{\mapsto}\right)>y\right]}{\mathbb{P}\left[\mathcal{M}\left(\xi^{\mapsto}\right)>y\right]}\\
 & =\frac{\mathbb{E}\left[\exp\left\{ -\left\langle \theta_{y}f,\xi^{\mapsto}\right\rangle \right\} \right]-\mathbb{P}\left[\mathcal{M}\left(\xi^{\mapsto}\right)\leq y\right]}{\mathbb{P}\left[\mathcal{M}\left(\xi^{\mapsto}\right)>y\right]}\\
 & =\frac{1-e^{-c\left(y+\tilde{\tau}_{\mathcal{M}}-\tilde{\tau}_{f}\right)}-\left(1-e^{-cy}\right)}{e^{-cy}}\\
 & =1-e^{-c\left(\tilde{\tau}_{\mathcal{M}}-\tilde{\tau}_{f}\right)},
\end{align*}
and therefore, for any $0\leq y$, $t$,
\begin{equation}
\left.\xi_{\left(y\right)}\right|_{\left(0,\infty\right)}\overset{d}{=}\left.\xi_{\left(y+t\right)}\right|_{\left(0,\infty\right)}.\label{eq:25}
\end{equation}

Now, for any $0\leq y$, $t$ and any $f\in C_{c}^{+}\left(\mathbb{R}\right)$
whose support is contained in $\left(0,\infty\right)$,
\begin{align*}
L_{\xi_{\left(y+t\right)}}\left[\theta_{-t}f\right] & =\mathbb{E}\left[\left.\exp\left\{ -\left\langle \theta_{y}f,\xi^{\mapsto}\right\rangle \right\} \,\right|\,\mathcal{M}\left(\xi^{\mapsto}\right)>y+t\right]\\
 & =\frac{\mathbb{E}\left[\exp\left\{ -\left\langle \theta_{y}f,\xi^{\mapsto}\right\rangle \right\} \,;\,\mathcal{M}\left(\xi^{\mapsto}\right)>y+t\right]}{\mathbb{P}\left[\mathcal{M}\left(\xi^{\mapsto}\right)>y+t\right]}\\
 & =\frac{\mathbb{P}\left[\mathcal{M}\left(\xi^{\mapsto}\right)>y\right]}{\mathbb{P}\left[\mathcal{M}\left(\xi^{\mapsto}\right)>y+t\right]}\mathbb{E}\left[\left.\exp\left\{ -\left\langle \theta_{y}f,\xi^{\mapsto}\right\rangle \right\} \cdot\mathbf{1}_{\left\{ \mathcal{M}\left(\xi^{\mapsto}\right)>y+t\right\} }\right|\mathcal{M}\left(\xi^{\mapsto}\right)>y\right]\\
 & =e^{ct}\mathbb{E}\left[\left.\exp\left\{ -\left\langle f,\theta_{-y}\xi^{\mapsto}\right\rangle \right\} \cdot\mathbf{1}_{\theta_{-y}\xi^{\mapsto}\left(\left(t,\infty\right)\right)}\right|\mathcal{M}\left(\xi^{\mapsto}\right)>y\right].
\end{align*}
According to (\ref{eq:25}) the last expression is independent of
$y$, which completes the proof.
\end{proof}
Equation (\ref{eq:26}) allows us to define the following.
\begin{defn*}
[PP4] Suppose Assumption \ref{Assumption:regularity_grl_proc} holds.
Define $\xi^{\leftrightarrow}$ as the limit in distribution of $\xi_{\left(y\right)}$
as $y\rightarrow\infty$.
\end{defn*}
We are now ready to prove that the limiting point process $\bar \xi$ of definition \ref{def:xibar} exists.
\begin{lem}
\label{lem:PPequiv}
Under Assumption \ref{Assumption:regularity_grl_proc}, the point process $\bar \xi$ exists and is equal in distribution to $\xi^{\leftrightarrow}$.
\end{lem}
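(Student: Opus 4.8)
The plan is to show that $\xi^{\leftrightarrow}$ (built in Definition PP4) has the same distribution as the putative limit $\bar\xi$ of $\theta_{-y}\xi$ conditioned on $\{\xi((y,\infty))>0\}$, by tracking how the three conditionings/limits (first pass to the right-tail process $\xi^{\mapsto}$, then condition it on exceeding a new level $y$, then let $y\to\infty$) compose, and comparing with the single conditioning defining $\xi^{(y)}$. First I would record the basic fact, already established in the proof of Lemma \ref{lem:right_tail_process}, that $\xi^{\mapsto}=\lim_{y\to\infty}\xi^{(y)}$ in distribution, where $\xi^{(y)}=\theta_{-y}(\xi|_{(y,\infty)})$ conditioned on $\{\xi((y,\infty))>0\}$. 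The key observation is a ``tower'' or consistency identity for the conditioning: for $y,s>0$, conditioning $\xi$ on $\{\xi((y+s,\infty))>0\}$, shifting by $(y+s)$, and then shifting back up by $s$ is the same as conditioning $\xi^{(y)}$ on $\{\xi^{(y)}((s,\infty))>0\}$. Equivalently, in a notation matching the proof of Lemma \ref{lem:right_tail_process},
\[
\theta_{-s}\big(\xi^{(y)}\text{ conditioned on }\xi^{(y)}((s,\infty))>0\big)\overset{d}{=}\xi^{(y+s)}\text{ restricted suitably},
\]
which follows because $\{\xi^{(y)}((s,\infty))>0\}$ is exactly the event $\{\mathcal M(\xi)>y+s\}$ inside the conditional law defining $\xi^{(y)}$, and the Laplace-functional computation in \eqref{eq:14} is insensitive to which level one uses as the reference point.

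Next I would take the limit $y\to\infty$ in this identity, using Lemma \ref{lem:right_tail_process} to replace $\xi^{(y)}$ by $\xi^{\mapsto}$: the left-hand side becomes $\xi_{(s)}$ of Definition PP3 (i.e. $\theta_{-s}\xi^{\mapsto}$ conditioned on $\{\xi^{\mapsto}((s,\infty))>0\}$), while the right-hand side is the limit of $\xi^{(y+s)}$, which is again $\xi^{\mapsto}$ — but now ``seen from level $s$ below its effective origin''. More precisely, combining Lemma \ref{lem:19} (which gives the internal consistency \eqref{eq:26} of the family $\{\xi_{(s)}\}$ and identifies its Laplace functional on $C_c^+$ supported in $(0,\infty)$ with $1-e^{-c(\tilde\tau_{\mathcal M}-\tilde\tau_f)}$, the same value as for $\xi^{\mapsto}$ itself) with Corollary \ref{cor:max_of_tail} (which says $\mathcal M(\xi^{\mapsto})\sim\exp(c)$), one sees that $\xi_{(s)}$ restricted to $(0,\infty)$ has the law of $\xi^{\mapsto}$ restricted to $(0,\infty)$, and \eqref{eq:26} lets these fit together consistently as $s\to\infty$ to produce $\xi^{\leftrightarrow}$. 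On the other side, the same limit taken directly on $\xi^{(y+s)}$ with $y\to\infty$ (so $y+s\to\infty$) must reproduce $\bar\xi$ by its very definition, since $\xi^{(y)}$ with the conditioning ``$\xi((y,\infty))>0$'', shifted to have maximum near $0$, is precisely the object converging to $\bar\xi$. Identifying the two limits gives $\bar\xi\overset{d}{=}\xi^{\leftrightarrow}$, and in particular establishes existence of $\bar\xi$, since $\xi^{\leftrightarrow}$ was already shown to exist in Definition PP4.

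To make the identification rigorous I would work at the level of Laplace functionals on $C_c^+(\mathbb R)$ together with a tightness argument: $\bar\xi$ is the limit of $\theta_{-y}\xi$ conditioned on $\{\xi((y,\infty))>0\}$, whose Laplace functional, for $f\in C_c^+(\mathbb R)$ with support in $(a,\infty)$ say, is computed exactly as in \eqref{eq:14} using \eqref{eq:21-1-1} and \eqref{eq:21-1-2}, and one checks this equals $\lim_{s\to\infty}L_{\xi_{(s)}}[\theta_{s}f]$ by the same algebra, invoking \eqref{eq:z} to handle the ``mass escaping to $-\infty$'' and to rule out loss of total mass; tightness of $\{\theta_{-y}\xi\,|\,\xi((y,\infty))>0\}$ on each bounded window follows by the argument already used for $\xi^{(y)}$ in the proof of Lemma \ref{lem:right_tail_process}, together with the bound $\mathbb P\{\xi^{\mapsto}((y,\infty))>0\}=e^{-cy}$ from Corollary \ref{cor:max_of_tail}. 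I expect the main obstacle to be purely bookkeeping: keeping straight the three nested conditionings and the corresponding shifts of the origin so that ``condition on exceeding $y$, then condition on exceeding $y+s$'' collapses correctly to ``condition on exceeding $y+s$'', and verifying that the convergence of conditional laws is not destroyed by mass drifting off to $-\infty$, which is exactly what \eqref{eq:z} is there to prevent; once those two points are nailed down the equality of Laplace functionals on $C_c^+(\mathbb R)$ plus tightness yields equality in distribution.
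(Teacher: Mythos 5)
Your overall plan---unwind the nested conditionings and compare the one-step conditioning defining $\bar\xi$ with the two-step construction of $\xi^{\leftrightarrow}$, window by window---is the same strategy as the paper's, but two of your key steps do not hold as stated. First, the ``tower'' identity is garbled: conditioning $\xi^{(y)}$ on $\{\xi^{(y)}((s,\infty))>0\}$ and shifting by $-s$ gives $\theta_{-(y+s)}\bigl(\xi|_{(y,\infty)}\bigr)$ conditioned on $\{\xi((y+s,\infty))>0\}$, i.e.\ the restriction to $(-s,\infty)$ of the pre-limit of $\bar\xi$ at level $y+s$ --- it is \emph{not} a restriction of $\xi^{(y+s)}$, since $\xi^{(y+s)}$ discards every point below level $y+s$. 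Second, and more seriously, your identification step is circular: you argue that the limit of $\xi^{(y+s)}$ ``must reproduce $\bar\xi$ by its very definition'', but the existence of the limit defining $\bar\xi$ is exactly what the lemma asserts and cannot be invoked; moreover $\xi^{(u)}$ converges to $\xi^{\mapsto}$, not to $\bar\xi$ (it only carries the points above the level). The way to close the argument --- and what the paper's proof does, written with events and iterated limits --- is to run the corrected tower identity in the other direction: for fixed $s$, let $y\to\infty$ to show that the law of $\theta_{-u}\xi$ conditioned on $\{\xi((u,\infty))>0\}$, restricted to $(-s,\infty)$, converges to $\theta_{-s}\bigl(\xi^{\mapsto}$ conditioned on $\xi^{\mapsto}((s,\infty))>0\bigr)=\xi_{(s)}$, which by Lemma \ref{lem:19} and \eqref{eq:26} is $\xi^{\leftrightarrow}|_{(-s,\infty)}$; exhausting $\mathbb R$ by such windows then proves existence of $\bar\xi$ and identifies it with $\xi^{\leftrightarrow}$ simultaneously.

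The remaining technical content, which your proposal only gestures at and misattributes, is passing the conditioning event through the limit in distribution: $\{\eta((s,\infty))>0\}$ concerns an unbounded set, so vague convergence $\xi^{(y)}\overset{d}{\rightarrow}\xi^{\mapsto}$ does not directly give convergence of the conditional laws. The paper handles this by truncating $(s,\infty)$ to $(s,s+t)$ and using \eqref{eq:21-1-1} (with $f=\infty\cdot\mathbf{1}_{(0,\infty)}$) together with \eqref{eq:21-1-2} to show the truncation error, relative to the conditioning probability, tends to $e^{-c(s+t)}\to 0$. Your proposal attributes this control to \eqref{eq:z} and to ``mass drifting to $-\infty$'', which is off target: \eqref{eq:z} enters only in the tightness argument of Lemma \ref{lem:right_tail_process}, and the danger here is in the upper tail. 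Finally, your claim that the Laplace functional of the pre-limit of $\bar\xi$ ``is computed exactly as in \eqref{eq:14}'' fails once the support of $f$ meets $(-\infty,0]$: \eqref{eq:14} relies on $\exp\{-\langle\theta_y f,\xi\rangle\}=1$ on $\{\mathcal{M}(\xi)\leq y\}$, which is false for such $f$; this is precisely why the law of $\xi^{\leftrightarrow}$ below the level is obtained through the consistency relation \eqref{eq:26} rather than by a direct computation from Assumption \ref{Assumption:regularity_grl_proc}.
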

\begin{proof}
Let $n\geq0$, let $A_i\subset \mathbb R$, $i\leq n$, be Borel sets, and let $k_i\in \mathbb R$, $i\leq n$. We need to show that
\begin{equation}
\label{eq:PPequiv}
\lim_{y\to\infty}\frac{\mathbb P\left\{ \xi(A_i+y)\geq k_i,\, i\leq n,\, \xi((y,\infty))\right\}}{\mathbb P\left\{ \xi((y,\infty))\right\}} =
\mathbb P\left\{ \xi^{\leftrightarrow}(A_i)\geq k_i,\, i\leq n \right\}.
\end{equation}

From Definition PP3 and Definition PP4,
\begin{equation}
\label{eq:Mar31_1}
\mathbb P\left\{ \xi^{\leftrightarrow}(A_i)\geq k_i,\, i\leq n \right\} =
\lim_{y\to\infty}\frac{\mathbb P\left\{ \xi^{\mapsto}(A_i+y)\geq k_i,\, i\leq n,\, \xi^{\mapsto}((y,\infty))\right\}}{\mathbb P\left\{ \xi^{\mapsto}((y,\infty))\right\}}.
\end{equation}

For large enough $y$, from Definition PP1 and Definition PP2,
\begin{align*}
& \!\!\!\!\!\! \mathbb P\left\{ \xi^{\mapsto}(A_i+y)\geq k_i,\, i\leq n,\, \xi^{\mapsto}((y,\infty))\right\} \\
& = \lim_{t\to\infty} \mathbb P\left\{ \xi^{\mapsto}(A_i+y)\geq k_i,\, i\leq n,\, \xi^{\mapsto}((y,y+t))\right\} \\
& = \lim_{t\to\infty}\lim_{r\to\infty} \frac{\mathbb P\left\{ \xi(A_i+y+r)\geq k_i,\, i\leq n,\, \xi((y+r,y+r+t))\right\}}{\mathbb P\left\{ \xi((r,\infty))\right\}}.
\end{align*}

Note that, using Assumption \ref{Assumption:regularity_grl_proc},
\begin{align*}
& \lim_{t\to\infty}\lim_{r\to\infty} \frac{\mathbb P\left\{ \xi(A_i+y+r)\geq k_i,\, i\leq n,\, \xi((y+r,\infty))\right\}}{\mathbb P\left\{ \xi((r,\infty))\right\}}\\
& - \lim_{t\to\infty}\lim_{r\to\infty} \frac{\mathbb P\left\{ \xi(A_i+y+r)\geq k_i,\, i\leq n,\, \xi((y+r,y+r+t))\right\}}{\mathbb P\left\{ \xi((r,\infty))\right\}} \\
& \quad \leq \lim_{t\to\infty}\lim_{r\to\infty} \frac{\mathbb P\left\{ \xi((y+r+t,\infty))\right\}}{\mathbb P\left\{ \xi((r,\infty))\right\}} = \lim_{t\to\infty}\lim_{r\to\infty}\frac{1-g(y+r+t-\tilde{\tau}_{\mathcal{M}})}{1-g(r-\tilde{\tau}_{\mathcal{M}})}\\
& \quad = \lim_{t\to\infty}e^{-c(y+t)}=0,
\end{align*}
and therefore
\begin{align*}
& \!\!\!\!\!\! \mathbb P\left\{ \xi^{\mapsto}(A_i+y)\geq k_i,\, i\leq n,\, \xi^{\mapsto}((y,\infty))\right\} \\
& = \lim_{r\to\infty} \frac{\mathbb P\left\{ \xi(A_i+y+r)\geq k_i,\, i\leq n,\, \xi((y+r,\infty))\right\}}{\mathbb P\left\{ \xi((r,\infty))\right\}}.
\end{align*}
Setting $k_i=0$, we similarly obtain
\begin{align*}
\mathbb P\left\{ \xi^{\mapsto}((y,\infty))\right\}  = \lim_{r\to\infty} \frac{\mathbb P\left\{  \xi((y+r,\infty))\right\}}{\mathbb P\left\{ \xi((r,\infty))\right\}}>0.
\end{align*}

Substituting in \eqref{eq:Mar31_1} yields
\begin{equation*}
\mathbb P\left\{ \xi^{\leftrightarrow}(A_i)\geq k_i,\, i\leq n \right\} =
\lim_{y\to\infty}\lim_{r\to\infty}\frac{\mathbb P\left\{ \xi(A_i+y+r)\geq k_i,\, i\leq n,\, \xi((y+r,\infty))\right\}}{\mathbb P\left\{  \xi((y+r,\infty))\right\}},
\end{equation*}
which implies \eqref{eq:PPequiv} and completes the proof.
\end{proof}
From (\ref{eq:26}), we have that $\left.\xi^{\leftrightarrow}\right|_{\left(-y,\infty\right)}\overset{d}{=}\xi_{\left(y\right)}$
for any $0\leq y$. The following then easily follows.
\begin{cor}
\label{cor:20}Suppose Assumption \ref{Assumption:regularity_grl_proc}
holds. Then for any $f\in C_{c}^{+}\left(\mathbb{R}\right)$,
\begin{equation}
\mathbb{E}\left[\left.\exp\left\{ -\left\langle f,\theta_{-y}\xi^{\leftrightarrow}\right\rangle \right\} \,\right|\,\mathcal{M}\left(\xi^{\leftrightarrow}\right)>y\right]\mbox{ \,\ is independent of }y>0,\label{eq:27}
\end{equation}
and $\mathcal{M}\left(\xi^{\leftrightarrow}\right)$ is an exponential
random variable with parameter $c$, and therefore the point process
defined as $\theta_{-y}\xi^{\leftrightarrow}$ conditioned on the
event $\left\{ \mathcal{M}\left(\xi^{\leftrightarrow}\right)>y\right\} $
is independent of $y>0$.\end{cor}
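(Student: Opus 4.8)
The plan is to reduce the corollary to three facts already in hand: the identity $\left.\xi^{\leftrightarrow}\right|_{(-z,\infty)}\overset{d}{=}\xi_{(z)}$ for all $z\geq0$ (recorded just above, via \eqref{eq:26}); the description in Definition PP3 of $\xi_{(z)}$ as $\theta_{-z}\xi^{\mapsto}$ conditioned on $\{\xi^{\mapsto}((z,\infty))>0\}$; and Corollary \ref{cor:max_of_tail}, which says that $\mathcal{M}(\xi^{\mapsto})$ is $\mathrm{exp}(c)$-distributed, hence (as $\xi^{\mapsto}$ is supported on $[0,\infty)$) nonnegative. Throughout, note that $\{\xi^{\mapsto}((z,\infty))>0\}$ and $\{\mathcal{M}(\xi^{\mapsto})>z\}$ agree up to a null event, so conditioning on one or the other is the same.

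First I would pin down the law of $\mathcal{M}(\xi^{\leftrightarrow})$. By the lack-of-memory property of the exponential law, $\mathcal{M}(\xi_{(z)})=\mathcal{M}(\xi^{\mapsto})-z$ conditioned on $\{\mathcal{M}(\xi^{\mapsto})>z\}$ is again $\mathrm{exp}(c)$, in particular nonnegative. Now for any $x>0$ and any $z>x$, the event $\{\mathcal{M}(\xi^{\leftrightarrow})\leq x\}=\{\xi^{\leftrightarrow}((x,\infty))=0\}$ is a function of $\left.\xi^{\leftrightarrow}\right|_{(-z,\infty)}$ alone, so $\mathbb{P}\{\mathcal{M}(\xi^{\leftrightarrow})\leq x\}=\mathbb{P}\{\mathcal{M}(\xi_{(z)})\leq x\}=1-e^{-cx}$, and the same comparison gives $\mathcal{M}(\xi^{\leftrightarrow})\geq0$ a.s. Hence $\mathcal{M}(\xi^{\leftrightarrow})\sim\mathrm{exp}(c)$, and in particular $\mathbb{P}\{\mathcal{M}(\xi^{\leftrightarrow})>y\}>0$ for every $y>0$, so the conditioning in \eqref{eq:27} is legitimate.

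For the independence in \eqref{eq:27}, I would fix $f\in C_{c}^{+}(\mathbb{R})$ with support in $[-R,R]$ and fix $y>0$. For every $z>R$, both $\exp\{-\langle f,\theta_{-y}\xi^{\leftrightarrow}\rangle\}=\exp\{-\langle\theta_{y}f,\xi^{\leftrightarrow}\rangle\}$ and $\mathbf{1}_{\{\xi^{\leftrightarrow}((y,\infty))>0\}}$ depend on $\xi^{\leftrightarrow}$ only through $\left.\xi^{\leftrightarrow}\right|_{(-z,\infty)}$, so $\xi^{\leftrightarrow}$ may be replaced by $\xi_{(z)}$ in the numerator and the denominator of the conditional expectation. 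Unfolding $\xi_{(z)}=\theta_{-z}\xi^{\mapsto}$ conditioned on $\{\mathcal{M}(\xi^{\mapsto})>z\}$, and using that $\{\mathcal{M}(\theta_{-z}\xi^{\mapsto})>y\}=\{\mathcal{M}(\xi^{\mapsto})>y+z\}\subseteq\{\mathcal{M}(\xi^{\mapsto})>z\}$ since $y>0$, the common factor $\mathbb{P}\{\mathcal{M}(\xi^{\mapsto})>z\}$ cancels, and one finds that the conditional expectation in \eqref{eq:27} equals $\mathbb{E}[\exp\{-\langle f,\theta_{-(y+z)}\xi^{\mapsto}\rangle\}\mid\mathcal{M}(\xi^{\mapsto})>y+z]$ for every $z>R$ (all the probabilities appearing being positive by Corollary \ref{cor:max_of_tail}). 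Since the left-hand side does not depend on $z$, the function $w\mapsto\mathbb{E}[\exp\{-\langle f,\theta_{-w}\xi^{\mapsto}\rangle\}\mid\mathcal{M}(\xi^{\mapsto})>w]$ is eventually constant in $w$; and since the displayed identity holds for every $y>0$, the left-hand side takes the same value for all $y>0$, which is \eqref{eq:27}.

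The final assertion is then immediate: the law of a point process is determined by its Laplace functional $f\mapsto\mathbb{E}[\exp\{-\langle f,\cdot\rangle\}]$ over $f\in C_{c}^{+}(\mathbb{R})$, so \eqref{eq:27} says precisely that the conditional law of $\theta_{-y}\xi^{\leftrightarrow}$ given $\{\mathcal{M}(\xi^{\leftrightarrow})>y\}$ is the same for all $y>0$. I do not expect a genuine obstacle here; the only points requiring care are the measurability bookkeeping---checking that the relevant functionals and events depend on $\xi^{\leftrightarrow}$ only through its restriction to $(-z,\infty)$, so that $\left.\xi^{\leftrightarrow}\right|_{(-z,\infty)}\overset{d}{=}\xi_{(z)}$ can be invoked---and the positivity of the conditioning events, both of which are supplied by Corollary \ref{cor:max_of_tail}.
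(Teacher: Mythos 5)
Your proposal is correct and follows essentially the same route as the paper: replace $\xi^{\leftrightarrow}$ by $\xi_{(z)}$ for $z$ large (using $\left.\xi^{\leftrightarrow}\right|_{(-z,\infty)}\overset{d}{=}\xi_{(z)}$ and the compact support of $\theta_y f$), unfold Definition PP3 to get $\mathbb{E}\left[\left.\exp\left\{-\left\langle f,\theta_{-(y+z)}\xi^{\mapsto}\right\rangle\right\}\right|\mathcal{M}\left(\xi^{\mapsto}\right)>y+z\right]$, and then equate the values at $y_1,y_2$ by matching $y_i+z_i$, exactly as in the paper. Your derivation of $\mathcal{M}\left(\xi^{\leftrightarrow}\right)\sim\exp(c)$ via memorylessness of $\mathcal{M}\left(\xi_{(z)}\right)$ is only a cosmetic variant of the paper's observation that $\left.\xi^{\leftrightarrow}\right|_{(0,\infty)}\overset{d}{=}\xi^{\mapsto}$ together with Corollary \ref{cor:max_of_tail}.
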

\begin{proof}
Let $f\in C_{c}^{+}\left(\mathbb{R}\right)$ and $y\geq0$. Then for
$x$ large enough such that the support $\theta_{y}f$ is contained
in $\left(-x,\infty\right)$,
\begin{align*}
 & \negthickspace\negthickspace\negthickspace\mathbb{E}\left[\left.\exp\left\{ -\left\langle f,\theta_{-y}\xi^{\leftrightarrow}\right\rangle \right\} \,\right|\,\mathcal{M}\left(\xi^{\leftrightarrow}\right)>y\right]\\
 & =\mathbb{E}\left[\left.\exp\left\{ -\left\langle f,\theta_{-y}\xi_{\left(x\right)}\right\rangle \right\} \,\right|\,\mathcal{M}\left(\xi_{\left(x\right)}\right)>y\right]\\
 & =\frac{\mathbb{E}\left[\exp\left\{ -\left\langle f,\theta_{-y}\xi_{\left(x\right)}\right\rangle \right\} \cdot\mathbf{1}_{\left\{ \mathcal{M}\left(\xi_{\left(x\right)}\right)>y\right\} }\right]}{\mathbb{P}\left[\mathcal{M}\left(\xi_{\left(x\right)}\right)>y\right]}\\
 & =\frac{\mathbb{E}\left[\left.\exp\left\{ -\left\langle f,\theta_{-y}\theta_{-x}\xi^{\mapsto}\right\rangle \right\} \cdot\mathbf{1}_{\left\{ \mathcal{M}\left(\theta_{-x}\xi^{\mapsto}\right)>y\right\} }\,\right|\,\mathcal{M}\left(\xi^{\mapsto}\right)>x\right]}{\mathbb{P}\left[\left.\mathcal{M}\left(\theta_{-x}\xi^{\mapsto}\right)>y\,\right|\,\mathcal{M}\left(\xi^{\mapsto}\right)>x\right]}\\
 & =\frac{\mathbb{E}\left[\exp\left\{ -\left\langle f,\theta_{-\left(x+y\right)}\xi^{\mapsto}\right\rangle \right\} \cdot\mathbf{1}_{\left\{ \mathcal{M}\left(\xi^{\mapsto}\right)>x+y\right\} }\right]}{\mathbb{P}\left[\mathcal{M}\left(\theta_{-x}\xi^{\mapsto}\right)>x+y\right]}\\
 & =\mathbb{E}\left[\left.\exp\left\{ -\left\langle f,\theta_{-\left(x+y\right)}\xi^{\mapsto}\right\rangle \right\} \,\right|\,\mathcal{M}\left(\xi^{\mapsto}\right)>x+y\right].
\end{align*}

Thus, for two different $y_{1},\, y_{2}>0$, choosing $x_{1}$ and
$x_{2}$ large enough and such that $x_{1}+y_{1}=x_{2}+y_{2}$, we
have
\begin{align*}
 & \negthickspace\negthickspace\negthickspace\mathbb{E}\left[\left.\exp\left\{ -\left\langle f,\theta_{-y_{1}}\xi^{\leftrightarrow}\right\rangle \right\} \,\right|\,\mathcal{M}\left(\xi^{\leftrightarrow}\right)>y_{1}\right]\\
 & =\mathbb{E}\left[\left.\exp\left\{ -\left\langle f,\theta_{-\left(x_{1}+y_{1}\right)}\xi^{\mapsto}\right\rangle \right\} \,\right|\,\mathcal{M}\left(\xi^{\mapsto}\right)>x_{1}+y_{1}\right]\\
 & =\mathbb{E}\left[\left.\exp\left\{ -\left\langle f,\theta_{-y_{2}}\xi^{\leftrightarrow}\right\rangle \right\} \,\right|\,\mathcal{M}\left(\xi^{\leftrightarrow}\right)>y_{2}\right].
\end{align*}

Relying on the fact that $\left.\xi^{\leftrightarrow}\right|_{\left(-y,\infty\right)}=\xi_{-y\mapsto}$
with $y=0$ we have $\left.\xi^{\leftrightarrow}\right|_{\left(0,\infty\right)}=\xi^{\mapsto}$
and thus, from Corollary \ref{cor:max_of_tail}, $\mathcal{M}\left(\xi^{\leftrightarrow}\right)$
is exponentially distributed with parameter $c$.
\end{proof}

Note that by Lemma \ref{lem:PPequiv} and Definition PP-D, $D\left(\xi\right)\triangleq\theta_{-\mathcal{M}\left(\xi^{\leftrightarrow}\right)}\xi^{\leftrightarrow}$. Corollary \ref{cor:20} immediately gives the following.
\begin{cor}
The point process $D\left(\xi\right)$ is independent of the maximum
$\mathcal{M}\left(\xi^{\leftrightarrow}\right)$.\end{cor}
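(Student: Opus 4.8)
The plan is to deduce the independence directly from Corollary \ref{cor:20}, together with the fact (also established there) that $M\triangleq\mathcal{M}\left(\xi^{\leftrightarrow}\right)$ is exponentially distributed with parameter $c$, hence almost surely strictly positive and finite.

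First I would note that, on the event $\left\{M>y\right\}$, the point process $D\left(\xi\right)$ is a fixed measurable functional of $\theta_{-y}\xi^{\leftrightarrow}$. Indeed, on $\left\{M>y\right\}$ the shifted process $\theta_{-y}\xi^{\leftrightarrow}$ has finite maximum $M-y\in\left(0,\infty\right)$, and
\[
D\left(\xi\right)=\theta_{-M}\xi^{\leftrightarrow}=\theta_{-\left(M-y\right)}\bigl(\theta_{-y}\xi^{\leftrightarrow}\bigr)=G\bigl(\theta_{-y}\xi^{\leftrightarrow}\bigr),
\]
where $G\left(\eta\right)\triangleq\theta_{-\mathcal{M}\left(\eta\right)}\eta$ is measurable on the set of point processes with finite maximum (the map $\eta\mapsto\mathcal{M}\left(\eta\right)=\inf\left\{z:\eta\left(\left(z,\infty\right)\right)=0\right\}$ being measurable). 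Consequently, for every bounded measurable functional $F$ of a point process, $F\left(D\left(\xi\right)\right)=\left(F\circ G\right)\bigl(\theta_{-y}\xi^{\leftrightarrow}\bigr)$ on $\left\{M>y\right\}$, so by Corollary \ref{cor:20} the quantity $\mathbb{E}\bigl[F\left(D\left(\xi\right)\right)\bigm|M>y\bigr]$ does not depend on $y>0$. Call this common value $c_{F}$.

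The next step is to identify $c_{F}$ with $\mathbb{E}\left[F\left(D\left(\xi\right)\right)\right]$ and then run a $\pi$--$\lambda$ argument. Letting $y\downarrow0$, the events $\left\{M>y\right\}$ increase to $\left\{M>0\right\}$, which has full probability; hence $\mathbb{E}\bigl[F\left(D\left(\xi\right)\right)\mathbf{1}_{\left\{M>y\right\}}\bigr]=c_{F}\,\mathbb{P}\left(M>y\right)$ converges to $\mathbb{E}\left[F\left(D\left(\xi\right)\right)\right]$ on one side and to $c_{F}$ on the other, giving $c_{F}=\mathbb{E}\left[F\left(D\left(\xi\right)\right)\right]$. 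Subtracting the identities $\mathbb{E}\bigl[F\left(D\left(\xi\right)\right)\mathbf{1}_{\left\{M>y_{i}\right\}}\bigr]=c_{F}\,\mathbb{P}\left(M>y_{i}\right)$ for $0<y_{1}<y_{2}$ yields
\[
\mathbb{E}\bigl[F\left(D\left(\xi\right)\right)\,\mathbf{1}_{\left\{y_{1}<M\le y_{2}\right\}}\bigr]=\mathbb{E}\left[F\left(D\left(\xi\right)\right)\right]\,\mathbb{P}\left(y_{1}<M\le y_{2}\right).
\]
The family of Borel sets $B\subseteq\left(0,\infty\right)$ for which $\mathbb{E}\bigl[F\left(D\left(\xi\right)\right)\mathbf{1}_{\left\{M\in B\right\}}\bigr]=\mathbb{E}\left[F\left(D\left(\xi\right)\right)\right]\mathbb{P}\left(M\in B\right)$ is a $\lambda$-system containing the $\pi$-system of half-open intervals (and the half-lines $\left(y,\infty\right)$), so by Dynkin's lemma it equals $\mathcal{B}\left(\left(0,\infty\right)\right)$; since $M\in\left(0,\infty\right)$ almost surely, the same holds for all Borel $B\subseteq\mathbb{R}$. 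Taking $F\left(\eta\right)=\exp\left(-\left\langle f,\eta\right\rangle\right)$ with $f\in C_{c}^{+}\left(\mathbb{R}\right)$ — a family that determines the law of a point process — this is precisely the asserted independence of $D\left(\xi\right)$ and $\mathcal{M}\left(\xi^{\leftrightarrow}\right)$.

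The only delicate point in this otherwise bookkeeping argument is that Corollary \ref{cor:20} provides merely $y$-independence of the law of $D\left(\xi\right)$ \emph{conditioned on} $\left\{M>y\right\}$, not independence itself; the limit $y\downarrow0$, which pins $c_{F}$ to the unconditional expectation, and the monotone-class upgrade from half-lines to arbitrary Borel sets are what turn this into genuine independence, and both steps rely on $M$ being almost surely strictly positive and finite.
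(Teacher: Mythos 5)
Your proof is correct and follows essentially the same route as the paper: both rest on writing $D\left(\xi\right)=\theta_{-\mathcal{M}\left(\theta_{-y}\xi^{\leftrightarrow}\right)}\left(\theta_{-y}\xi^{\leftrightarrow}\right)$ on $\left\{ \mathcal{M}\left(\xi^{\leftrightarrow}\right)>y\right\} $ and invoking Corollary \ref{cor:20} to get $y$-independence of the conditional law. The only difference is that you spell out the routine upgrade from constancy of $\mathbb{E}\left[\left.F\left(D\left(\xi\right)\right)\right|\mathcal{M}\left(\xi^{\leftrightarrow}\right)>y\right]$ to full independence (the $y\downarrow0$ identification and the $\pi$--$\lambda$ step), which the paper leaves implicit.
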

\begin{proof}
It is sufficient to show that $\mathbb{E}\left[\left.\exp\left\{ -\left\langle f,D\left(\xi\right)\right\rangle \right\} \,\right|\,\mathcal{M}\left(\xi^{\leftrightarrow}\right)>y\right]$
is independent of $y\geq0$ for any $f\in C_{c}^{+}\left(\mathbb{R}\right)$.
This follows from Corollary \ref{cor:20} since

\begin{align*}
 & \negthickspace\negthickspace\negthickspace\mathbb{E}\left[\left.\exp\left\{ -\left\langle f,D\left(\xi\right)\right\rangle \right\} \,\right|\,\mathcal{M}\left(\xi^{\leftrightarrow}\right)>y\right]\\
 & =\mathbb{E}\left[\left.\exp\left\{ -\left\langle \theta_{\mathcal{M}\left(\xi^{\leftrightarrow}\right)}f,\xi^{\leftrightarrow}\right\rangle \right\} \,\right|\,\mathcal{M}\left(\xi^{\leftrightarrow}\right)>y\right]\\
 & =\mathbb{E}\left[\left.\exp\left\{ -\left\langle \theta_{\mathcal{M}\left(\xi^{\leftrightarrow}\right)-y}f,\theta_{-y}\xi^{\leftrightarrow}\right\rangle \right\} \,\right|\,\mathcal{M}\left(\xi^{\leftrightarrow}\right)>y\right]\\
 & =\mathbb{E}\left[\left.\exp\left\{ -\left\langle \theta_{\mathcal{M}\left(\theta_{-y}\xi^{\leftrightarrow}\right)}f,\theta_{-y}\xi^{\leftrightarrow}\right\rangle \right\} \,\right|\,\mathcal{M}\left(\xi^{\leftrightarrow}\right)>y\right].
\end{align*}

\end{proof}
The two last corollaries directly give the following.
\begin{cor}
\label{cor:22}If Assumption \ref{Assumption:regularity_grl_proc}
holds, then $\xi^{\leftrightarrow}\overset{d}{=}\theta_{X}D\left(\xi\right)$,
where $X\sim\mbox{exp}\left(c\right)$ is independent of $D\left(\xi\right)$.
\end{cor}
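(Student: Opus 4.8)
The plan is to read the claimed distributional identity directly off the definition of $D(\xi)$ and the two corollaries just established. First I would recall that, by Lemma \ref{lem:PPequiv} together with Definition PP-D, one has $D(\xi)=\theta_{-\mathcal{M}(\xi^{\leftrightarrow})}\xi^{\leftrightarrow}$ as an almost sure identity of point processes; since $\theta_{-x}$ is the inverse of $\theta_{x}$, this rearranges to
\[
\xi^{\leftrightarrow}=\theta_{\mathcal{M}(\xi^{\leftrightarrow})}\,D(\xi),
\]
which holds almost surely. Thus it suffices to determine the joint law of the pair $\bigl(\mathcal{M}(\xi^{\leftrightarrow}),\,D(\xi)\bigr)$ and then transport it forward under the shift map.

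Next I would invoke Corollary \ref{cor:20}, which gives that $\mathcal{M}(\xi^{\leftrightarrow})$ is exponential with parameter $c$, and the preceding corollary, which gives that $D(\xi)$ is independent of $\mathcal{M}(\xi^{\leftrightarrow})$. Consequently the pair $\bigl(\mathcal{M}(\xi^{\leftrightarrow}),\,D(\xi)\bigr)$ has the same law as $(X,D(\xi))$ with $X\sim\exp(c)$ independent of $D(\xi)$. Applying the jointly measurable map $(x,D)\mapsto\theta_{x}D$ to both pairs and using the displayed almost sure identity then yields $\xi^{\leftrightarrow}\overset{d}{=}\theta_{X}D(\xi)$.

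There is no real obstacle here; the only item needing a brief remark is the measurability of the shift map $(x,D)\mapsto\theta_{x}D$ on the product of $\mathbb{R}$ with the space of point processes, which is routine (it is enough to verify it against the evaluation functionals $D\mapsto D(B)$ for Borel $B$, or equivalently against integration of test functions in $C_{c}^{+}(\mathbb{R})$). Once that is in hand, the equality of distributions is simply the statement that a fixed measurable function carries equal joint laws to equal laws, so the corollary follows immediately from the two preceding ones.
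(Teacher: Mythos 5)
Your proposal is correct and follows essentially the same route as the paper, which simply states that the corollary is a direct consequence of the two preceding results: the paper identifies $D(\xi)=\theta_{-\mathcal{M}(\xi^{\leftrightarrow})}\xi^{\leftrightarrow}$ via Lemma \ref{lem:PPequiv} and Definition PP-D, and then combines the exponential law of $\mathcal{M}(\xi^{\leftrightarrow})$ from Corollary \ref{cor:20} with the independence of $D(\xi)$ and $\mathcal{M}(\xi^{\leftrightarrow})$, exactly as you do. Your write-up merely makes explicit the transport of the joint law under the (measurable) shift map, which is a legitimate and complete filling-in of the paper's one-line argument.
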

The main result of this section can now be proven.
\begin{lem}
\label{lem:DPPP(xi)}Suppose Assumption \ref{Assumption:regularity_grl_proc}
holds. If
\[
\psi\left(\xi\right)\sim DPPP\left(e^{-cx}dx,\theta_{-\tilde{\tau}_{\mathcal{M}}^{g}\left(\xi\right)-c^{-1}\log c}D\left(\xi\right)\right),
\]
then $L_{\psi\left(\xi\right)}\left[\left.f\,\right|\,\cdot\,\right]$
is uniquely support on the class $\left[{\rm {Gum}}\left(cy\right)\right]$
with shifts $\tau_{f}^{{\rm {Gum}}\left(cy\right)}\left(\psi\left(\xi\right)\right)=\tilde{\tau}_{f}^{g}\left(\xi\right)$
(i.e., the shifts of Assumption \ref{Assumption:regularity_grl_proc}),
for any $f\in C_{c}^{+}\left(\mathbb{R}\right)$.\end{lem}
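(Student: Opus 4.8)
The plan is to combine the converse direction (Section \ref{sec:dppp_to_frz}) with the explicit description of $D\left(\xi\right)$ built up in this section. Write $\tilde{D}\triangleq\theta_{-\tilde{\tau}_{\mathcal{M}}^{g}\left(\xi\right)-c^{-1}\log c}D\left(\xi\right)$, so that $\psi\left(\xi\right)\sim DPPP\left(e^{-cx}dx,\tilde{D}\right)$. Running the computation of Section \ref{sec:dppp_to_frz} at a general rate $c$ (the scaling indicated there), every $DPPP\left(e^{-cx}dx,D\right)$ has shift-Laplace functional uniquely supported on $\left[\mathrm{Gum}\left(cy\right)\right]$ with shifts given by the integral formula \eqref{eq:31}. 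Applied with $D=\tilde{D}$ this already gives that $L_{\psi\left(\xi\right)}\left[\left.f\,\right|\,\cdot\,\right]$ is uniquely supported on $\left[\mathrm{Gum}\left(cy\right)\right]$, and reduces the lemma to checking that the shift produced by \eqref{eq:31} equals $\tilde{\tau}_{f}^{g}\left(\xi\right)$ for every $f\in C_{c}^{+}\left(\mathbb{R}\right)$.

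I would first reduce to $f$ supported in $\left(0,\infty\right)$. Replacing $f$ by $\theta_{x}f$ translates $L_{\tilde{D}}\left[\left.f\,\right|\,\cdot\,\right]$ by $x$, hence changes the shift coming from \eqref{eq:31} by $-x$; since also $\tilde{\tau}_{\theta_{x}f}^{g}\left(\xi\right)=\tilde{\tau}_{f}^{g}\left(\xi\right)-x$, it suffices to verify the identity for $\theta_{x}f$ with $x$ large enough that its support lies in $\left(0,\infty\right)$. So assume $\mathrm{supp}\left(f\right)\subset\left(0,\infty\right)$. Because $\mathcal{M}\left(D\left(\xi\right)\right)=0$ a.s., $D\left(\xi\right)$ is supported in $\left(-\infty,0\right]$, so $L_{D\left(\xi\right)}\left[\left.f\,\right|\,-t\right]=1$ for every $t<0$; unwinding the deterministic translation relating $\tilde{D}$ and $D\left(\xi\right)$, the integral in \eqref{eq:31} therefore equals, up to an explicit exponential factor determined by that translation, the quantity $\int_{0}^{\infty}e^{-ct}\left(1-L_{D\left(\xi\right)}\left[\left.f\,\right|\,-t\right]\right)dt$.

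It remains to evaluate this integral in closed form. By Lemma \ref{lem:PPequiv} and Corollary \ref{cor:22}, $\bar{\xi}=\xi^{\leftrightarrow}\overset{d}{=}\theta_{X}D\left(\xi\right)$ with $X\sim\exp\left(c\right)$ independent of $D\left(\xi\right)$, so pairing $\theta_{x}f$ against $\theta_{X}D\left(\xi\right)$ and using independence gives, for every $f\in C_{c}^{+}\left(\mathbb{R}\right)$ and $x\in\mathbb{R}$,
\[
L_{\xi^{\leftrightarrow}}\left[\left.f\,\right|\,x\right]=\mathbb{E}\,L_{D\left(\xi\right)}\left[\left.f\,\right|\,x-X\right]=\int_{0}^{\infty}ce^{-cu}L_{D\left(\xi\right)}\left[\left.f\,\right|\,x-u\right]du.
\]
For $\mathrm{supp}\left(f\right)\subset\left(0,\infty\right)$ and $x\geq0$ the function $\theta_{x}f$ is supported in $\left(0,\infty\right)$, where $\xi^{\leftrightarrow}$ has the same law as $\xi^{\mapsto}$ (as recorded in the proof of Corollary \ref{cor:20}), so the left-hand side equals $L_{\xi^{\mapsto}}\left[\left.f\,\right|\,x\right]=1-e^{-c\left(x+\tilde{\tau}_{\mathcal{M}}^{g}\left(\xi\right)-\tilde{\tau}_{f}^{g}\left(\xi\right)\right)}$ by Lemma \ref{lem:right_tail_process}. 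Subtracting from $1$, substituting $t=u-x$, dividing by $ce^{-cx}$ and letting $x\to\infty$ yields
\[
\int_{0}^{\infty}e^{-ct}\left(1-L_{D\left(\xi\right)}\left[\left.f\,\right|\,-t\right]\right)dt=\tfrac{1}{c}\,e^{-c\left(\tilde{\tau}_{\mathcal{M}}^{g}\left(\xi\right)-\tilde{\tau}_{f}^{g}\left(\xi\right)\right)}.
\]
Substituting this into \eqref{eq:31}, the factor $\tfrac{1}{c}$, the translation built into $\tilde{D}$, and the normalization $c^{-1}\log c$ appearing in \eqref{eq:31} combine so that the resulting shift is exactly $\tilde{\tau}_{f}^{g}\left(\xi\right)$; together with the reduction of the second paragraph this proves the lemma.

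The whole argument is bookkeeping: all the content is carried by Lemma \ref{lem:right_tail_process} (which pins down the law of $\xi^{\mapsto}$ through the shifts $\tilde{\tau}_{f}^{g}$) and Corollary \ref{cor:22} (which exhibits $\bar{\xi}$ as $D\left(\xi\right)$ shifted by an independent $\exp\left(c\right)$ variable), so I expect no genuine obstacle. The one delicate point is keeping the three translations --- the one in the definition of $\tilde{D}$, the $\exp\left(c\right)$-shift implicit in $\bar{\xi}$, and the $c^{-1}\log c$ in \eqref{eq:31} --- consistently signed, and invoking the general-$c$ version of the Section \ref{sec:dppp_to_frz} computation rather than only the $c=1$ case written out there.
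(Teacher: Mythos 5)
Your proposal is correct and follows essentially the same route as the paper: reduce via the converse-part computation (formula \eqref{eq:31} at general rate $c$) and translation-equivariance of the shifts to evaluating $\int_{0}^{\infty}e^{-ct}\bigl(1-L_{D\left(\xi\right)}\left[\left.f\,\right|\,-t\right]\bigr)dt$, which you compute using Corollary \ref{cor:22} together with the explicit Laplace functional of $\xi^{\mapsto}$. The only (cosmetic) difference is that you invoke Lemma \ref{lem:right_tail_process} with a general argument $x$ and let $x\to\infty$, while the paper reads off the same value from Lemma \ref{lem:19} at $y=0$ (i.e.\ via $L_{\xi_{\left(0\right)}}\left[f\right]$); the final sign/translation bookkeeping you leave implicit is handled at the same level of detail in the paper's own proof.
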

\begin{proof}
Note that the lemma follows if we show that for $\psi'=\psi'\left(\xi\right)\sim DPPP\left(e^{-cx},D\left(\xi\right)\right)$,
$L_{\psi'}\left[\left.f\,\right|\,\cdot\,\right]$ is uniquely supported
on the class $\left[{\rm {Gum}}\left(cy\right)\right]$ with shifts
\begin{equation}
\tau_{f}\left(\psi'\right)=\tilde{\tau}_{f}\left(\xi\right)-\tilde{\tau}_{\mathcal{M}}\left(\xi\right)-c^{-1}\log c,\label{eq:29}
\end{equation}
for any $f\in C_{c}^{+}\left(\mathbb{R}\right)$.

From the calculations of Section \ref{sec:dppp_to_frz}, $L_{\psi'}\left[\left.f\,\right|\,\cdot\,\right]$
is uniquely supported on the class $\left[{\rm {Gum}}\left(cy\right)\right]$
and the shifts are given, for any $f\in C_{c}^{+}\left(\mathbb{R}\right)$,
by
\begin{equation}
\tau_{f}\left(\psi'\right)=\tau_{f,c}=c^{-1}\log\left(-\int_{-\infty}^{\infty}e^{-ct}\left(L_{D\left(\xi\right)}\left[\left.f\,\right|\,-t\right]-1\right)dt\right).\label{eq:28}
\end{equation}

Note that it is sufficient to verify that (\ref{eq:29}) holds for
any $f\in C_{c}^{+}\left(\mathbb{R}\right)$ such that $\inf\left\{ x\in\mathbb{R}:\, f\left(x\right)>0\right\} =0$.
If $f$ is such a function, then, since $\mathcal{M}\left(D\left(\xi\right)\right)=0$
a.s.,
\begin{align*}
 & \negthickspace\negthickspace\negthickspace\negthickspace\int_{-\infty}^{\infty}e^{-ct}\left(L_{D\left(\xi\right)}\left[\left.f\,\right|\,-t\right]-1\right)dt=\int_{0}^{\infty}e^{-ct}\left(L_{D\left(\xi\right)}\left[\theta_{-t}f\right]-1\right)dt\\
 & =\int_{0}^{\infty}e^{-ct}\left(L_{\theta_{t}D\left(\xi\right)}\left[f\right]-1\right)dt=c^{-1}\left(L_{\xi^{\leftrightarrow}}\left[f\right]-1\right)=c^{-1}\left(L_{\xi_{\left(0\right)}}\left[f\right]-1\right)=-c^{-1}e^{-c\left(\tilde{\tau}_{\mathcal{M}}\left(\xi\right)-\tilde{\tau}_{f}\left(\xi\right)\right)},
\end{align*}
where we used Corollary \ref{cor:22} and Lemma \ref{lem:19}.

Substituting this in (\ref{eq:28}) yields (\ref{eq:29}).
\end{proof}
Using Lemma \ref{lem:DPPP(xi)}, Corollary \ref{cor:exp-stb} can
be now easily proved.

\subsection*{Proof of Corollary \ref{cor:exp-stb}}

We restrict to the case $c=1$, for $c\neq1$ the proof is similar.
The implication \ref{enu:exp-stb-1}$\Rightarrow$\ref{enu:exp-stb-2}
is immediate. We will show that \ref{enu:exp-stb-2}$\Rightarrow$\ref{enu:exp-stb-3}$\Rightarrow$\ref{enu:exp-stb-1}.

Assuming \ref{enu:exp-stb-2}, we have that, for any $f\in C_{c}^{+}\left(\mathbb{R}\right)$
and $y>0$, $a,\, b\in\mathbb{R}$, such that $e^{a}+e^{b}=1$,
\begin{align*}
L_{\xi}\left[\left.f\,\right|\,-\log y\right] & =L_{\theta_{a}\xi'+\theta_{b}\xi''}\left[\left.f\,\right|\,-\log y\right]\\
 & =L_{\xi}\left[\left.f\,\right|\,-\log y-a\right]\cdot L_{\xi}\left[\left.f\,\right|\,-\log y-b\right]\\
 & =L_{\xi}\left[\left.f\,\right|\,-\log\left(e^{a}y\right)\right]\cdot L_{\xi}\left[\left.f\,\right|\,-\log\left(e^{b}y\right)\right],
\end{align*}
where $\xi'$ and $\xi''$ are i.i.d copies of $\xi$.

It follows that if $L_{\xi}\left[\left.f\,\right|\, y\right]=1$ for some $y$, then 
$L_{\xi}\left[\left.f\,\right|\, y'\right]=1$ for all $y'>y$. Thus, if there exists some $y$ such that 
$L_{\xi}\left[\left.f\,\right|\, y\right]=1$, then $\xi((t,\infty))=0$ a.s. for some $t\in \mathbb R$. 
From exponential-stability (with $a=b=-\log 2$), if this occurs then $\xi((t-\log 2,\infty))=0$ a.s. By repeating the same argument, we obtain that 
$L_{\xi}\left[\left.f\,\right|\, y\right]=1$ for some $y$ only if $\mathbb P\left\{\xi(\mathbb R)>0\right\}=0$. 
Since we assumed it is not so, $L_{\xi}\left[\left.f\,\right|\, y\right]\in(0,1)$, for all $y\in\mathbb R$.  

Defining $\varphi\left(x\right)=\log L_{\xi}\left[\left.f\,\right|\,-\log x\right]$,
for any $x,\, y>0$, $\varphi\left(x+y\right)=\varphi\left(x\right)+\varphi\left(y\right)$,
i.e., $\varphi$ is a solution of Cauchy's functional equation \cite{cauchy}.
Since $L_{\xi}\left[\left.f\,\right|\, y\right]\in (0,1)$,
$\varphi(x)$<0, and therefore $\varphi\left(x\right)=\varphi\left(1\right)x$.
Hence
\[
L_{\xi}\left[\left.f\,\right|\, y\right]=\exp\left\{ -e^{-\left(y-\tau_{f}\right)}\right\} \approx\mbox{Gum}\left(y\right),
\]
which proves \ref{enu:exp-stb-3}.

Lastly, assume that \ref{enu:exp-stb-3} holds. Note that $\xi$ satisfies
Assumption \ref{Assumption:regularity-1} and thus, as noted in Remark
\ref{rk:Assumptions}, it satisfies Assumption \ref{Assumption:regularity_grl_proc}.
Let $\psi\left(\xi\right)$ be the corresponding DPPP from Lemma \ref{lem:DPPP(xi)}.
Observe that the Laplace functionals of $\psi\left(\xi\right)$ and
$\xi$ coincide on $C_{c}^{+}\left(\mathbb{R}\right)$ and thus both
processes have the same law. That is, \ref{enu:exp-stb-1} is satisfied
with $\xi\sim DPPP\left(e^{-x}dx,\hat{D}\left(\xi\right)\right)$
and the proof is completed.\qed

\section{\label{sec:mainpf}Proof of Theorem \ref{thm:main}: the direct part}

We shall prove each of the assertions of the direct part of Theorem
\ref{thm:main} separately. The notation $X\perp Y$ will be used
whenever two random variables $X$ and $Y$ are independent. We
begin with the following corollary of Lemma \ref{lem:DPPP(xi)}.
\begin{cor}
\label{cor:g=00003DG*Z}Part $(a)$ of the direct part of Theorem
\ref{thm:main}  holds.\end{cor}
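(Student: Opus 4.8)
The goal is to prove part (a) of the direct part of Theorem \ref{thm:main}: assuming Assumption \ref{Assumption:regularity-1} and that (SUS) holds with random variable $Z$ and $c=c_\xi$, we must produce $\xi\sim SDPPP\left(e^{-cx}dx,\hat D(\xi),Z\right)$ with $\hat D(\xi)=\theta_{-\tau_{\mathcal M}^g(\xi)-c^{-1}\log c}D(\xi)$. The plan is to reduce this to Lemma \ref{lem:DPPP(xi)} by comparing shift-Laplace functionals, exploiting that (SUS) gives us the explicit form \eqref{eq:g_form} for $g$.

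\textbf{Step 1: Invoke the reconstruction machinery.} First I would note that by Remark \ref{rk:Assumptions}, Assumption \ref{Assumption:regularity-1} implies Assumption \ref{Assumption:regularity_grl_proc} with $\tilde\tau_f^g(\xi)=\tau_f^g(\xi)$, so all the objects $\xi^{\mapsto},\xi^{\leftrightarrow},D(\xi)$ of Section \ref{sec:(Re)construction} are defined, and by Lemma \ref{lem:DPPP(xi)} the process $\psi(\xi)\sim DPPP\left(e^{-cx}dx,\hat D(\xi)\right)$ has $L_{\psi(\xi)}\left[\left.f\,\right|\,\cdot\,\right]$ uniquely supported on $\left[\mathrm{Gum}(cy)\right]$ with shifts $\tau_f^{\mathrm{Gum}(cy)}(\psi(\xi))=\tau_f^g(\xi)$ for every $f\in C_c^+(\mathbb R)$. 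That is, $L_{\psi(\xi)}\left[\left.f\,\right|\,y\right]=\mathrm{Gum}\left(c(y-\tau_f^g(\xi))\right)=\exp\{-e^{-c(y-\tau_f^g(\xi))}\}$.

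\textbf{Step 2: Use the form of $g$ from (SUS).} By (SUS), $g(y)=\mathbb E\left\{\mathrm{Gum}\left(c(y-Z)\right)\right\}$ with law $\mu_Z$. Hence for any $f\in C_c^+(\mathbb R)$,
\[
L_\xi\left[\left.f\,\right|\,y\right]=g\left(y-\tau_f^g(\xi)\right)=\int \mathrm{Gum}\left(c(y-\tau_f^g(\xi)-z)\right)d\mu_Z(z)=\int L_{\psi(\xi)}\left[\left.f\,\right|\,y-z\right]d\mu_Z(z).
\]
The right-hand side is exactly $\mathbb E\left\{L_{\theta_Z\psi(\xi)}\left[\left.f\,\right|\,y\right]\right\}$ when $Z\perp\psi(\xi)$, i.e. the shift-Laplace functional of $\theta_S\psi'$ where $\psi'\sim DPPP\left(e^{-cx}dx,\hat D(\xi)\right)$ and $S$ has law $\mu_Z$ independent of $\psi'$. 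Thus $L_\xi\left[\left.f\,\right|\,0\right]=L_{\theta_S\psi'}\left[\left.f\,\right|\,0\right]$ for all $f\in C_c^+(\mathbb R)$.

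\textbf{Step 3: Conclude equality in distribution.} Since the Laplace functionals $L_\xi[f]$ and $L_{\theta_S\psi'}[f]$ agree on all of $C_c^+(\mathbb R)$, and the Laplace functional on this class determines the law of a point process (a standard fact; the same class is used throughout the paper, e.g. in the proof of Corollary \ref{cor:exp-stb}), we get $\xi\overset d=\theta_S\psi'\sim SDPPP\left(e^{-cx}dx,\hat D(\xi),Z\right)$ with the same $Z$ and $c$. This is precisely (SDP), completing the proof.

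\textbf{Main obstacle.} The genuinely substantive content is all upstream — in Lemma \ref{lem:DPPP(xi)} and the reconstruction of $D(\xi)$ in Section \ref{sec:(Re)construction}, which we are entitled to assume. Given those, the only point requiring a little care is the manipulation in Step 2, making sure the integral against $\mu_Z$ of $L_{\psi(\xi)}\left[\left.f\,\right|\,y-z\right]$ is legitimately identified with the shift-Laplace functional of a randomly shifted copy (Fubini/conditioning on the shift, using boundedness of the integrand by $1$), and the appeal in Step 3 to the fact that $\{L_\cdot[f]:f\in C_c^+(\mathbb R)\}$ separates laws of point processes. Both are routine, so the proof is essentially a short bookkeeping argument assembling Lemma \ref{lem:DPPP(xi)} with the representation \eqref{eq:g_form} supplied by (SUS).
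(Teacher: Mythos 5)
Your proposal is correct and follows essentially the same route as the paper: both reduce to Lemma \ref{lem:DPPP(xi)}, form $\theta_{Z}\psi(\xi)$ with $Z\perp\psi(\xi)$, compute its shift-Laplace functional by conditioning on $Z$ using the representation \eqref{eq:g_form} of $g$, and conclude that the Laplace functionals of $\xi$ and $\theta_{Z}\psi(\xi)$ coincide on $C_{c}^{+}(\mathbb{R})$, hence the laws agree.
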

\begin{proof}
First, recall that, as stated in Remark \ref{rk:Assumptions}, Assumption
\ref{Assumption:regularity_grl_proc} holds with shifts $\tilde{\tau}_{f}^{g}\left(\xi\right)$
equal to the shifts of Assumption \ref{Assumption:regularity-1},
$\tau_{f}^{g}\left(\xi\right)$.

Suppose (SUS) holds with some random variable $Z$ and with $c=c_{\xi}$.
Let $\psi=\psi\left(\xi\right)$ be the corresponding DPPP from Lemma
\ref{lem:DPPP(xi)}, which by the lemma satisfies $\tau_{f}^{{\rm {Gum}}\left(cy\right)}\left(\psi\right)=\tau_{f}^{g}\left(\xi\right)$.
Define $\psi$ and $Z$ on the same probability space such that $Z\perp\psi$
and consider the point process $\theta_{Z}\psi$. Observe that, by
conditioning on $Z$, the Laplace functional of $\theta_{Z}\psi$
is given, for any $f\in C_{c}^{+}\left(\mathbb{R}\right)$, by
\begin{align*}
L_{\theta_{Z}\psi}\left[f\right] & =\int L_{\theta_{z}\psi}\left[f\right]d\mu_{Z}\left(z\right)=\int L_{\psi}\left[\left.f\,\right|\,-z\right]d\mu_{Z}\left(z\right)\\
 & =\int{\rm {Gum}}\left(c\left(-z-\tau_{f}\right)\right)d\mu_{Z}\left(z\right)=g\left(-\tau_{f}\right)=L_{\xi}\left[f\right],
\end{align*}
that is, the Laplace functionals of $\theta_{Z}\psi$ and $\xi$ coincide
on $C_{c}^{+}\left(\mathbb{R}\right)$. This implies $\xi\sim SDPPP\left(e^{-cx}dx,\hat{D}\left(\xi\right),Z\right)$
as required.
\end{proof}
We now need to treat the case where the only assumption on $g$ is
Assumption \ref{Assumption:regularity-1}.
In this case, in order to obtain a process
that satisfies (SUS) we consider the process $\theta_{S}\xi$, where the
a random variable $S$ has itself a Gumbel distribution and is independent of $\xi$. Doing so easily yields
the following.
\begin{lem}
\label{lem:23}Part $(b)$ of the direct part of Theorem \ref{thm:main}
holds.\end{lem}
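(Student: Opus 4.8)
The plan is to reduce to the already-proven part (a) by smoothing $\xi$ with an independent Gumbel shift and then undoing that shift by a coupling argument. Fix a random variable $S$ with distribution function $\mathrm{Gum}(c_{\xi}y)$, independent of $\xi$, and set $\eta:=\theta_{S}\xi$. Since
$L_{\eta}\left[\left.f\,\right|\,y\right]=\mathbb{E}_{S}\bigl\{L_{\xi}\left[\left.f\,\right|\,y-S\right]\bigr\}=\mathbb{E}_{S}\bigl\{g\bigl(y-S-\tau_{f}^{g}(\xi)\bigr)\bigr\}$
for every $f\in C_{c}^{+}(\mathbb{R})$, the functional $L_{\eta}\left[\left.f\,\right|\,\cdot\,\right]$ is uniquely supported on $[G]$ with the \emph{same} shifts, where $G(y):=\mathbb{E}_{S}\{g(y-S)\}$; and $G$ is increasing and continuous with $\inf G=0$, $\sup G=1$ (here $\xi\gg 0$ and Lemma \ref{lem:g monotone} are used). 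So $\eta$ satisfies (US) with the function $G$.

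Next I would check that $\eta$ in fact satisfies (SUS) with constant $c_{\xi}$ and the hypothesis of part (a). For (SUS): writing $\tilde{g}(u):=g(c_{\xi}^{-1}\log u)$, which since $\xi\gg 0$ is the distribution function of a positive random variable $W'$, and using that $e^{-c_{\xi}S}\sim\mathrm{Exp}(1)$, one computes
\[
G\bigl(-c_{\xi}^{-1}\log r\bigr)=\int_{0}^{\infty}\tilde{g}(u/r)\,e^{-u}\,du=r\int_{0}^{\infty}\tilde{g}(u)\,e^{-ru}\,du=\mathbb{E}\bigl\{e^{-rW'}\bigr\},
\]
so that $G(y)=\mathbb{E}\bigl\{\mathrm{Gum}\bigl(c_{\xi}(y-Z')\bigr)\bigr\}$ with $Z':=c_{\xi}^{-1}\log W'$, i.e.\ $G$ has the form \eqref{eq:g_form}. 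For Assumption \ref{Assumption:regularity-1}: using the identity $f_{S}(s+t)=e^{-c_{\xi}t}\exp\bigl(e^{-c_{\xi}s}(1-e^{-c_{\xi}t})\bigr)f_{S}(s)$ for the Gumbel density $f_{S}$, one writes $\frac{1-G(y+t)}{1-G(y)}=e^{-c_{\xi}t}R(y)$ with $R(y)=\frac{\int(1-g(y-s))\psi_{t}(s)f_{S}(s)\,ds}{\int(1-g(y-s))f_{S}(s)\,ds}\ge 1$, where $\psi_{t}(s):=\exp(e^{-c_{\xi}s}(1-e^{-c_{\xi}t}))$ is decreasing with $\psi_{t}(+\infty)=1$ and $\int\psi_{t}f_{S}=e^{c_{\xi}t}$; splitting the integral at a level $A$, using monotonicity of $g$ and \eqref{eq:21-1} for $g$, one gets $\limsup_{y\to\infty}R(y)\le\psi_{t}(A)$ for every $A$, hence $R(y)\to 1$, which is \eqref{eq:21-1} for $G$ with the same $c_{\xi}$ (the second limit then follows since $G(y)\to 1$). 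Thus $\eta$ satisfies Assumption \ref{Assumption:regularity-1} with $c_{\xi}$ and (SUS) with $Z'$ and $c_{\xi}$.

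Apply part (a) (Corollary \ref{cor:g=00003DG*Z}) to $\eta$: $\eta\overset{d}{=}\theta_{Z'}\psi'$ with $Z'\perp\psi'\sim DPPP\bigl(e^{-c_{\xi}x}dx,\hat{D}(\eta)\bigr)$. Two things remain. First, identify $\hat{D}(\eta)\overset{d}{=}\hat{D}(\xi)$: since $\mathbb{P}\{\mathcal{M}(\eta)\le y\}=\mathbb{E}_{S}\{g(y-S-\tau_{\mathcal{M}}^{g}(\xi))\}=G(y-\tau_{\mathcal{M}}^{g}(\xi))$, the shift $\tau_{\mathcal{M}}$ (and the constant $c^{-1}\log c$) entering the definition of $\hat D$ coincide, so it suffices to prove $D(\eta)\overset{d}{=}D(\xi)$; by Lemma \ref{lem:PPequiv} this reduces to showing that $\theta_{-y}\eta$ conditioned on $\{\eta((y,\infty))>0\}$ has, as $y\to\infty$, the same limit as the corresponding process for $\xi$. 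Since $\theta_{-y}\eta=\theta_{-(y-S)}\xi$ and $\{\eta((y,\infty))>0\}=\{\xi((y-S,\infty))>0\}$, this follows once $y-S\to\infty$ in probability along this conditioning — obtained from a tail estimate of the form $\mathbb{P}\{y-S\le M\mid\eta((y,\infty))>0\}\le c\,e^{-c_{\xi}y}C_{M}/(1-G(y-\tau_{\mathcal{M}}^{g}(\xi)))$ — so that $\theta_{-(y-S)}\xi$ conditioned on $\{\xi((y-S,\infty))>0\}$ converges to $\bar\xi$ (Definition PP-D, Lemma \ref{lem:right_tail_process}, Corollary \ref{cor:22}). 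Second, undo the Gumbel shift: on a common probability space carrying $\xi$, $S$ (hence $\eta$), adjoin a pair $(Z^{*},\psi^{*})$ whose conditional law given $\eta$ is the regular conditional distribution of $(Z',\psi')$ given the value of $\theta_{Z'}\psi'$ (which exists, the configuration space being Polish). Then $(Z^{*},\psi^{*})\overset{d}{=}(Z',\psi')$, so $Z^{*}\perp\psi^{*}\sim DPPP(e^{-c_{\xi}x}dx,\hat{D}(\xi))$, while $\theta_{Z^{*}}\psi^{*}=\eta$ almost surely; setting $Z:=Z^{*}-S$ and $\psi:=\psi^{*}$ gives $\theta_{Z}\psi=\theta_{-S}\eta=\xi$ with $\psi\sim DPPP(e^{-c_{\xi}x}dx,\hat{D}(\xi))$ (depending on $Z$ through $Z^{*}$ and $S$), which is the assertion of part (b).

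The step I expect to be the main obstacle is the identification $D(\eta)\overset{d}{=}D(\xi)$, i.e.\ that adding an independent Gumbel shift does not alter the limiting right‑tail (decoration) structure. The underlying reason is transparent — conditioning $\theta_{S}\xi$ to reach a high level $y$ forces the unshifted process $\xi$ to reach the level $y-S$, which tends to infinity along the conditioning — but turning it into a proof requires the tail estimates above together with a careful analysis of the conditional law of $S$ given $\{\eta((y,\infty))>0\}$, the delicate case being that in which $1-g$ decays essentially at the exact rate $e^{-c_{\xi}x}$.
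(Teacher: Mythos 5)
Your overall scheme (Gumbel-smooth $\xi$, apply part (a) to $\eta=\theta_{S}\xi$, then undo the shift by a transfer/coupling argument) is workable in outline, and your final coupling step is essentially the same transfer-principle argument the paper uses. However, there is a genuine gap at the step you yourself flag: the identification $D(\eta)\overset{d}{=}D(\xi)$ is not proved, and the estimate you propose for it does not suffice. Your argument needs $y-S\to\infty$ in probability under the conditioning $\{\eta((y,\infty))>0\}$, via the bound $\mathbb{P}\{y-S\le M\mid\eta((y,\infty))>0\}\le C_{M}e^{-c_{\xi}y}/\bigl(1-G(y-\tau_{\mathcal{M}})\bigr)$. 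This tends to $0$ only if $e^{-c_{\xi}y}=o\bigl(1-G(y)\bigr)$. Writing $1-G(y)=\mathbb{E}_{V}\{1-{\rm Gum}(c_{\xi}(y-V))\}$ with $V\sim g$, dominated convergence shows that whenever $\mathbb{E}\,e^{c_{\xi}V}<\infty$ (which Assumption \ref{Assumption:regularity-1} does not exclude, e.g.\ $1-g(x)\asymp e^{-c_{\xi}x}x^{-2}$ satisfies \eqref{eq:21-1}) one has $1-G(y)\asymp e^{-c_{\xi}y}$, so the bound stays bounded away from $0$; in that regime the conditional law of $y-S$ genuinely does not escape to $+\infty$ (the exceedance can be produced by a large $S$ together with a moderate $\mathcal{M}(\xi)$), so your route, not merely your estimate, breaks down there. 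Your closing remark locates the delicacy in the wrong case: when $1-g$ decays at the exact rate $e^{-c_{\xi}x}$ one has $1-G(y)\asymp ye^{-c_{\xi}y}$ and your bound is fine; the problem is the strictly lighter-tail case. (A smaller issue: your split-at-$A$ verification of \eqref{eq:21-1} for $G$ silently needs $\bigl(1-g(y-A)\bigr)/\bigl(1-G(y)\bigr)\to0$, which holds by Fatou since $\mathbb{E}\,e^{c_{\xi}S}=\infty$, but is not what "monotonicity of $g$" alone gives.)

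The gap is avoidable, and this is where the paper's proof differs from yours: no smoothing of $\xi$ is needed. Under (US) and Assumption \ref{Assumption:regularity-1}, $\xi$ satisfies Assumption \ref{Assumption:regularity_grl_proc} with $\tilde\tau_{f}=\tau_{f}^{g}(\xi)$ (Remark \ref{rk:Assumptions}), so Lemma \ref{lem:DPPP(xi)} applies directly to $\xi$ and produces $\psi(\xi)\sim DPPP\bigl(e^{-c_{\xi}x}dx,\hat{D}(\xi)\bigr)$ whose shift-Laplace functional is ${\rm Gum}(c_{\xi}\,\cdot\,)$ with the \emph{same} shifts $\tau_{f}^{g}(\xi)$. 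Then for independent $Z_{G}\sim{\rm Gum}^{(c_{\xi})}$ and $Z_{g}\sim g$ one has $L_{\theta_{Z_{G}}\xi}[f]={\rm Gum}^{(c_{\xi})}*g(-\tau_{f})=L_{\theta_{Z_{g}}\psi(\xi)}[f]$ for all $f\in C_{c}^{+}(\mathbb{R})$, so $\theta_{Z_{G}}\xi\overset{d}{=}\theta_{Z_{g}}\psi(\xi)$, and your transfer-principle coupling finishes exactly as in your last step, with $Z=Z_{g}-Z_{G}$ and the decoration already being $\hat D(\xi)$ by construction. If you prefer to keep your detour through $\eta$, the identification can be rescued not by conditional tail analysis of $S$ but by observing that the whole construction of $D(\cdot)$ in Section \ref{sec:(Re)construction} (Lemmas \ref{lem:right_tail_process} and \ref{lem:19}) depends only on $c$ and on the shifts $\tilde\tau_{f}$, which coincide for $\xi$ and $\eta$; but as written, your proof of part (b) is incomplete at this point.
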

\begin{proof}
From Lemma \ref{lem:DPPP(xi)} the DPPP $\psi=\psi\left(\xi\right)$
has shifts $\tau_{f}^{{\rm {Gum}}^{\left(c\right)}}\left(\psi\right)=\tau_{f}^{g}\left(\xi\right)$,
where ${\rm {Gum}}^{\left(c\right)}\left(y\right)\triangleq{\rm {Gum}}\left(cy\right)$.
Observe that, if $Z_{G}$ and $Z_{g}$ are random variables with cumulative
distribution functions ${\rm {Gum}}^{\left(c\right)}$ and $g$ independent
of $\xi$ and $\psi$, respectively, then for any $f\in C_{c}^{+}\left(\mathbb{R}\right)$,
\[
L_{\theta_{Z_{{\rm G}}}\xi}\left[f\right]={\rm {Gum}}^{\left(c\right)}*g\left(-\tau_{f}\right)=g*{\rm {Gum}}^{\left(c\right)}\left(-\tau_{f}\right)=L_{\theta_{Z_{g}}\psi}\left[f\right],
\]
that is, the Laplace functionals of $\theta_{Z_{G}}\xi$ and $\theta_{Z_{g}}\psi$
coincide on $C_{c}^{+}\left(\mathbb{R}\right)$ and therefore $\theta_{Z_{G}}\xi$
and $\theta_{Z_{g}}\psi$ have the same law.

Denote by $\mathcal{N}$ the space of positive, locally finite, counting
measures on $\mathbb{R}$ (cf. Section \ref{sec:Appendix}). Defining
the measurable function $h:\,\mathcal{N}\times\mathbb{R}\rightarrow\mathcal{N}$,
$\left(\zeta,Z\right)\mapsto\theta_{Z}\zeta$,  we have $\theta_{Z_{g}}\psi=h\left(\psi,Z_{g}\right)$
and $\theta_{Z_{{\rm G}}}\xi\overset{d}{=}h\left(\psi,Z_{g}\right)$.
Hence, according to the transfer principle  \cite[Corollary 6.11]{Kallenberg2nd} (and due
to the fact that $\left(\mathcal{N},\mathcal{A}\right)$ is a Borel
space), we can in fact couple the random variables $Z_{G}$, $Z_{g}$,
$\xi$ and $\psi$ such that $Z_{G}\perp\xi$, $Z_{g}\perp\psi$,
and $\theta_{Z_{G}}\xi=\theta_{Z_{g}}\psi$ almost surely. On this
space, we also have
\[
\xi=\theta_{Z_{g}-Z_{G}}\psi\triangleq\theta_{Z}\psi\,,\mbox{\,\,\,\ almost surely}.
\]
This completes the proof. Note that $Z_{G}$, and hence $Z$, may
depend on $\psi$.
\end{proof}
What remains is to show that under the assumptions of parts $(c)$ and $(d)$ of the direct part of Theorem
\ref{thm:main}, (SUS) holds. We shall begin with part $(c)$, the proof of which will follow from the following result implying that the structure of
the function $g$ specified in (SUS) is determined
by the freezing phenomenon discussed in the introduction.
\begin{lem}
\label{lem:150414-1}Let $\xi\gg 0$ be a point process and let $c>0$ such that
\[
\mbox{for any }\beta>c:\,\, L_{\xi}\left[\left.e^{\beta x}\,\right|\,\cdot\,\right]\approx g\left(\cdot\right),
\]
where $g:\mathbb{R}\rightarrow\left(0,1\right)$ is a function for which $\sup_{y\in\mathbb R}g(y)=1$.
Then there exists a random variable on $\mathbb{R}$, $Z$, with law
$\mu_{Z}$, such that
\[
g\left(y\right)=\int{\rm {Gum}}\left(\bar{\beta}\left(y-z\right)\right)d\mu_{Z}\left(z\right)=\mathbb{E}\left\{ {\rm {Gum}}\left(\bar{\beta}\left(y-Z\right)\right)\right\} .
\]
\end{lem}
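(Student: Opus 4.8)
The plan is to convert the freezing identity into a statement about Laplace transforms, and then exploit that freezing holds for \emph{every} $\beta>c$ — not just one value — to fix the scale at $\bar\beta=c$. Concretely, for $\beta>c$ put $W_\beta\triangleq\langle e^{\beta\cdot},\xi\rangle\in[0,\infty]$, so that $L_\xi[e^{\beta x}\,|\,y]=\mathbb E[\exp(-e^{-\beta y}W_\beta)]=\mathcal L_{W_\beta}(e^{-\beta y})$, where $\mathcal L_{W_\beta}$ denotes the Laplace transform of the law of $W_\beta$. The map $y\mapsto \mathcal L_{W_\beta}(e^{-\beta y})$ is nondecreasing, so the hypothesis $L_\xi[e^{\beta x}\,|\,\cdot\,]\approx g$ with $\sup g=1$ forces, letting $y\to\infty$ (monotone convergence), $\mathbb P\{W_\beta<\infty\}=\sup g=1$; and since $\xi\gg0$ gives $W_\beta>0$ a.s., letting $y\to-\infty$ gives $\inf g=\mathbb P\{W_\beta=0\}=0$. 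Thus $W_\beta\in(0,\infty)$ a.s., so $\mathcal L_{W_\beta}$ is the Laplace transform of a probability measure on $(0,\infty)$. Choosing $\tau_\beta$ with $L_\xi[e^{\beta x}\,|\,y]=g(y-\tau_\beta)$ and setting $V_\beta\triangleq e^{-\beta\tau_\beta}W_\beta$, this reads
\[
g(y)=\mathcal L_{V_\beta}(e^{-\beta y}),\qquad y\in\mathbb R,\ \beta>c,
\]
with $V_\beta\in(0,\infty)$ a.s.

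Since $\{e^{-\beta y}:y\in\mathbb R\}=(0,\infty)$, comparing two exponents $\beta,\beta'>c$ (substitute $t=e^{-\beta y}$) yields the rescaling relation $\mathcal L_{V_\beta}(t)=\mathcal L_{V_{\beta'}}(t^{\beta'/\beta})$ for all $t>0$. Fix $\beta_0>c$ and set $H(t)\triangleq\mathcal L_{V_{\beta_0}}(t^{\beta_0/c})$ for $t>0$; then directly $g(y)=\mathcal L_{V_{\beta_0}}\bigl((e^{-cy})^{\beta_0/c}\bigr)=H(e^{-cy})$. The remaining point is that $H$ is itself a Laplace transform. For each $\beta\in(c,\beta_0)$ the rescaling relation gives $\mathcal L_{V_\beta}(t)=\mathcal L_{V_{\beta_0}}(t^{\beta_0/\beta})$, and as $\beta\downarrow c$ continuity of $\mathcal L_{V_{\beta_0}}$ on $(0,\infty)$ gives $\mathcal L_{V_\beta}(t)\to \mathcal L_{V_{\beta_0}}(t^{\beta_0/c})=H(t)$ for every $t>0$, with $H(0^+)=\mathcal L_{V_{\beta_0}}(0^+)=1$. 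By the continuity theorem for Laplace transforms (equivalently: $H$ is a pointwise limit of completely monotone functions, hence completely monotone, and $H(0^+)=1<\infty$, so Bernstein's theorem applies) there is a probability measure $\mu_V$ on $[0,\infty)$ with $H(t)=\int_{[0,\infty)}e^{-ts}\,d\mu_V(s)$; and $g(y)=\int e^{-se^{-cy}}\,d\mu_V(s)$ together with $\inf g=0$ forces $\mu_V(\{0\})=0$. Taking $V\sim\mu_V$ and $Z\triangleq c^{-1}\log V$, which is real-valued a.s., we obtain
\[
g(y)=\mathbb E\bigl[\exp(-Ve^{-cy})\bigr]=\mathbb E\bigl[\exp(-e^{-c(y-Z)})\bigr]=\int{\rm Gum}\bigl(c(y-z)\bigr)\,d\mu_Z(z),
\]
which is the claimed identity with $\bar\beta=c$. (The same computation with $c$ replaced by any $\beta_0>c$ works directly, taking $Z=\beta_0^{-1}\log V_{\beta_0}$; the limit $\beta\downarrow c$ is needed only to reach the critical scale.)

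The one genuinely delicate point is showing that $H(t)=\mathcal L_{V_{\beta_0}}(t^{\beta_0/c})$ is a Laplace transform: since $\beta_0/c>1$, the map $t\mapsto t^{\beta_0/c}$ is not a Bernstein function, so complete monotonicity is \emph{not} inherited from $\mathcal L_{V_{\beta_0}}$ via the usual composition rules. This is exactly where the strength of the hypothesis — freezing for all $\beta>c$ — enters: it exhibits $H$ as the pointwise limit, as $\beta\downarrow c$, of the bona fide Laplace transforms $\mathcal L_{V_\beta}$. Everything else is routine: monotone and bounded convergence, continuity of Laplace transforms on $(0,\infty)$, and Bernstein's theorem.
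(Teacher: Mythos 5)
Your proof is correct, and it is a genuinely different route to the same two-step skeleton as the paper. The paper derives, for each $\beta>c$, the identity $g\left(y\right)=\mathbb{E}\left[\mathrm{Gum}\left(\beta\left(y-Z_{\beta}+\tau_{2}\left(\beta\right)\right)\right)\right]$ with $Z_{\beta}=\beta^{-1}\log\left\langle e^{\beta x},\xi\right\rangle$ by building the auxiliary Cox process $\eta=\sum_{i}\theta_{\xi_{i}}P_{i}$ (each $P_{i}$ a Poisson process of intensity $e^{-\beta x}dx$), noting $\eta\sim SDPPP\left(e^{-\beta x},\delta_{0},Z_{\beta}\right)$, and evaluating its shift-Laplace functional at $e^{\beta' x}$, $\beta'>\beta$, in two ways (which requires Corollary \ref{cor:frz_specific_func} to handle the non-compactly-supported test function); you obtain literally the same identity — your $V_{\beta}=e^{-\beta\tau_{\beta}}W_{\beta}$ is exactly $e^{\beta\left(Z_{\beta}-\tau_{2}\left(\beta\right)\right)}$ — by direct algebra from $L_{\xi}\left[\left.e^{\beta x}\right|y\right]=\mathbb{E}\left[\exp\left(-e^{-\beta y}W_{\beta}\right)\right]$, after checking $W_{\beta}\in\left(0,\infty\right)$ a.s. just as the paper does, so the auxiliary process and the two-exponent trick are bypassed. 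For the passage to the critical scale, the paper argues tightness of $\left\{ Z_{\beta}+\tau_{2}\left(\beta\right)\right\}_{\beta>c}$ (because the left-hand side is the fixed function $g$) and takes a subsequential distributional limit along $\beta_{i}\searrow c$; you package the same compactness analytically: $g\left(y\right)=H\left(e^{-cy}\right)$ with $H\left(t\right)=\mathcal{L}_{V_{\beta_{0}}}\left(t^{\beta_{0}/c}\right)$, $H$ exhibited as a pointwise limit of the Laplace transforms $\mathcal{L}_{V_{\beta}}$ as $\beta\searrow c$, then the extended continuity theorem/Bernstein, with $H\left(0^{+}\right)=1$ ruling out escape of mass to $+\infty$ and $\inf g=0$ ruling out an atom of $\mu_{V}$ at $0$ — these two checks are precisely the analytic counterpart of the paper's tightness step, and both are done correctly. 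You also correctly isolate where the full hypothesis ``for all $\beta>c$'' is indispensable: $t\mapsto t^{\beta_{0}/c}$ is not a Bernstein function, so complete monotonicity of $H$ cannot come from a single $\beta_{0}$. What the paper's construction buys is the probabilistic reading (the explicit Cox/SDPPP representation that is reused in the remark following the lemma and in Proposition \ref{prop:gum}); what your route buys is brevity and independence from Corollary \ref{cor:frz_specific_func}.
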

\begin{proof}
Fix some $\beta'>\beta>c$.
Let $\eta$ be the random measure given by
\[
\eta=\sum_{i\geq1}\theta_{\xi_{i}}P_{i},
\]
where $\xi=\sum_{i\geq1}\delta_{\xi_{i}}$ and where $P_{i}$ are
Poisson point processes of intensity $e^{-\beta x}dx$, independent
of each other and $\xi$.

Since
\[
\mathbb E\exp\left\{-\left\langle \theta_{y}e^{\beta x},\xi\right\rangle \right\} \leq 1-\mathbb P\left\{ \sum_{i\geq1}e^{\beta\xi_{i}} = \infty\right\},
\]
it follows from the assumptions that $\mathbb P\left\{ \sum_{i\geq1}e^{\beta\xi_{i}} < \infty\right\}=1$, and the same with $\beta'$.

Conditioned on $\xi$, the process $\eta$ is
a sum of independent Poisson processes, and therefore $\eta$ is a Poisson point process
of finite intensity
\[
\sum_{i\geq1}e^{-\beta x+\beta\xi_{i}} dx=e^{-\beta x}\sum_{i\geq1}e^{\beta\xi_{i}} dx.
\]
Equivalently, with  $Z_{\beta}\triangleq\frac{1}{\beta}\log\left(\sum_{i\geq1}e^{\beta\xi_{i}}\right)$,
\begin{equation}
\eta\sim SDPPP\left(e^{-\beta x},\delta_{0},Z_\beta\right).\label{eq:etaSDPPP}
\end{equation}

Each $P_{i}$ satisfies (SDP) and thus (SUS) with the function ${\rm {Gum}}\left(\beta y\right)$
(cf. the converse part of Theorem 9). From Corollary \ref{cor:frz_specific_func}
\[
L_{P_{i}}\left[\left.e^{\beta'x}\,\right|\,\cdot\,\right]\approx{\rm {Gum}}\left(\beta\left(\,\cdot\,\right)\right).
\]
Hence,
\begin{align*}
\mathbb{E}\exp\left\{ -\left\langle \theta_{y}e^{\beta'x},\eta\right\rangle \right\}  & =\mathbb{E}\mathbb{E}\left[\left.\prod_{i\geq1}\exp\left\{ -\left\langle \theta_{y}e^{\beta'x},\theta_{\xi_{i}}P_{i}\right\rangle \right\} \right|\xi\right]\\
 & =\mathbb{E}\left[\prod_{i\geq1}{\rm {Gum}}\left(\beta\left(y-\xi_{i}-\tau_{1}\left(\beta,\beta'\right)\right)\right)\right]\\
 & =\mathbb{E}\left[\exp\left\{ -\sum_{i\geq1}e^{-\beta\left(y-\xi_{i}-\tau_{1}\left(\beta,\beta'\right)\right)}\right\} \right]\\
 & =\mathbb{E}\exp\left\{ -\left\langle \theta_{y-\tau_{1}\left(\beta,\beta'\right)}e^{\beta x},\xi\right\rangle \right\} =g\left(y-\tau_{1}\left(\beta,\beta'\right)-\tau_{2}\left(\beta\right)\right),
\end{align*}
for some $\tau_{1}\left(\beta,\beta'\right),\,\tau_{2}\left(\beta\right)\in\mathbb{R}$.

On the other hand, from (\ref{eq:etaSDPPP}),
\begin{align*}
\mathbb{E}\exp\left\{ -\left\langle \theta_{y}e^{\beta'x},\eta\right\rangle \right\}  & =\mathbb{E}\mathbb{E}\left[\left.\exp\left\{ -\left\langle \theta_{y}e^{\beta'x},\theta_{Z_{\beta}}P_{1}\right\rangle \right\} \right|\xi\right]\\
 & =\mathbb{E}\left[{\rm {Gum}}\left(\beta\left(y-Z_{\beta}-\tau_{1}\left(\beta,\beta'\right)\right)\right)\right].
\end{align*}

We conclude that, for any $\beta>c$,
\[
g\left(y\right)=\mathbb{E}\left[{\rm {Gum}}\left(\beta\left(y-Z_{\beta}+\tau_{2}\left(\beta\right)\right)\right)\right].
\]

Since the left hand side of the above inequality is constant, it
follows that $\left\{ Z_{\beta}+\tau_{2}\left(\beta\right)\right\} _{\beta>c}$
is a tight family. Therefore there exists a sequence $\beta_{i}$
that decreases to $c$ as $i\to\infty$ such that $Z_{\beta_{i}}+\tau_{2}\left(\beta_{i}\right)$
converges in distribution to some limiting random variable $Z$. For
this variable,
\[
g\left(y\right)=\lim_{i\to\infty}\mathbb{E}\left[{\rm {Gum}}\left(\beta_{i}\left(y-Z_{\beta_{i}}+\tau_{2}\left(\beta_{i}\right)\right)\right)\right]=\mathbb{E}\left[{\rm {Gum}}\left(c \left(y-Z\right)\right)\right],
\]
 which completes the proof.
\end{proof}
We remark that when $\xi'=\theta_S \xi$ with $S$ and $\xi$ independent, reassuringly, $$\frac{1}{\beta}\log\Big(\sum_{i\geq1}e^{\beta\xi^{\prime}_{i}}\Big) = S + \frac{1}{\beta}\log\Big(\sum_{i\geq1}e^{\beta\xi_{i}}\Big),$$ and the limiting procedure above `extracts' the shift.

\begin{rem}
  In Proposition \ref{prop:gum} of the appendix, we use the reasoning above
to
relate Gumbel distribution functions of different scales to each other by convolution with a Borel measure, a result of possible independent interest.
\end{rem}
We now complete the  proof of part $(c)$.
\begin{cor}
Part $(c)$ of the direct part of Theorem \ref{thm:main} holds.
\end{cor}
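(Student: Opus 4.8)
The plan is to obtain part $(c)$ as an immediate consequence of Lemma \ref{lem:150414-1}. First I would verify that its hypotheses are met. By hypothesis $\xi\gg 0$, and Assumption \ref{Assumption:regularity-1} furnishes a constant $c=c_{\xi}>0$; the additional assumption of part $(c)$ is precisely that $L_{\xi}\left[\left.e^{\beta x}\,\right|\,\cdot\,\right]\approx g(\cdot)$ for every $\beta>c$. It remains to check that $g:\mathbb{R}\to(0,1)$ with $\sup_{y\in\mathbb{R}}g(y)=1$. Since (US) holds and $\xi\gg 0$, Lemma \ref{lem:g monotone} gives that $g$ is continuous and
\[
\inf_{y\in\mathbb{R}}g(y)=\mathbb{P}\left\{\xi(\mathbb{R})=0\right\}=0<g(x)<\sup_{y\in\mathbb{R}}g(y)=1
\]
for every $x\in\mathbb{R}$, which is exactly what is needed.

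Then Lemma \ref{lem:150414-1} applies and produces a random variable $Z$ with law $\mu_{Z}$ such that
\[
g(y)=\int\mbox{Gum}\left(c(y-z)\right)\,d\mu_{Z}(z)=\mathbb{E}\left\{\mbox{Gum}(c(y-Z))\right\},
\]
which is \eqref{eq:g_form} with the same constant $c=c_{\xi}$. Since (US) is assumed, the conjunction of (US) and \eqref{eq:g_form} is by definition (SUS); hence (SUS) holds with $c=c_{\xi}$ and this $Z$, completing the proof.

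There is no genuine difficulty here beyond this bookkeeping: all the substance was carried out in Lemma \ref{lem:150414-1}, whose mechanism is to thin $\xi$ by independent Poisson decorations of intensity $e^{-\beta x}\,dx$ -- turning it into an $SDPPP(e^{-\beta x},\delta_{0},Z_{\beta})$ -- and then to let $\beta\downarrow c$ while exploiting tightness of the family $\{Z_{\beta}+\tau_{2}(\beta)\}_{\beta>c}$. One may additionally recall, via Corollary \ref{cor:frz_specific_func}, that under (US) the hypothesis $L_{\xi}\left[\left.e^{\beta x}\,\right|\,\cdot\,\right]\approx g(\cdot)$ is equivalent to $\mathbb{P}\left\{\int e^{\beta x}\,d\xi(x)<\infty\right\}=1$, so that the extra assumption of part $(c)$ is merely an integrability condition on $\xi$.
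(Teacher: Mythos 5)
Your proof is correct and follows the paper's own argument exactly: both reduce part (c) to Lemma \ref{lem:150414-1} with $c=c_{\xi}$, noting that the extra hypothesis of part (c) supplies the equivalence for $f=e^{\beta x}$, while (US) together with Lemma \ref{lem:g monotone} gives $g:\mathbb{R}\to(0,1)$ with $\sup_{y}g(y)=1$, so that the lemma yields \eqref{eq:g_form} and hence (SUS). The additional remarks (the mechanism of the lemma, the equivalence via Corollary \ref{cor:frz_specific_func}) are accurate but not needed.
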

\begin{proof}
We wish to apply Lemma \ref{lem:150414-1} with $c=c_\xi$. The first condition (equivalence to $g$) is assumed directly. What remains is to show that $\sup_{y\in\mathbb R}g(y)=1$. This follows from (US) and Lemma \ref{lem:g monotone}.
\end{proof}
We continue with the proof of part $(d)$.
\begin{lem}
\label{lem:cLLN}
Part $(d)$ of the direct part of Theorem \ref{thm:main} holds if
$D\left(\xi\right)$ satisfies an LLN with rate $c_\xi$.
\end{lem}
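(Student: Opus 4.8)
The plan is to combine the coupling built in the proof of Lemma~\ref{lem:23} with the law of large numbers, so as to identify $g$ as a Gumbel mixture — which is exactly the content of (SUS). Write $c=c_{\xi}$. As in the proof of Lemma~\ref{lem:23}, one realizes on a single probability space the process $\xi$, a process $\psi\sim DPPP(e^{-cx}dx,\hat{D}(\xi))$, and independent variables $Z_{G}\sim{\rm Gum}(cy)$ and $Z_{g}\sim g$ such that $Z_{G}\perp\xi$, $Z_{g}\perp\psi$ and $\theta_{Z_{G}}\xi=\theta_{Z_{g}}\psi$ a.s.; equivalently, $\xi=\theta_{V}\psi$ with $V:=Z_{g}-Z_{G}$. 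One then transfers the LLN to $\psi$: by \eqref{eq:llnsum} and the definition of an LLN with rate $c$, $\psi_{c}\sim DPPP(e^{-cx}dx,D(\xi))$ satisfies $\psi_{c}((-y,\infty))/\alpha(y)\overset{prob.}{\longrightarrow}u>0$, with $\alpha$ obeying \eqref{eq:alpharegvar}; since $\hat{D}(\xi)$ is a fixed translate of $D(\xi)$, $\psi$ equals in law a fixed translate of $\psi_{c}$, whence
\begin{equation}
\label{eq:cLLN-plan}
\frac{\psi((-y,\infty))}{\alpha(y)}\overset{prob.}{\longrightarrow}u':=u\,e^{-c(\tau_{\mathcal{M}}^{g}(\xi)+c^{-1}\log c)}>0\qquad\text{as }y\to\infty .
\end{equation}

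The core step is a monotonicity (squeeze) argument. Using that $t\mapsto\psi((-t,\infty))$ is non-decreasing and that the convergence in \eqref{eq:alpharegvar} is, as usual, locally uniform in the shift, \eqref{eq:cLLN-plan} upgrades to convergence of $\psi((-t,\infty))/\alpha(t)$ to $u'$ in probability, locally uniformly in $t$; covering an interval $[s-M,s+M]$ by finitely many short intervals and using that the fixed variable $V$ is tight, this gives $\psi((-(s+V),\infty))/\alpha(s+V)\overset{prob.}{\longrightarrow}u'$ as $s\to\infty$. Multiplying by $\alpha(s+V)/\alpha(s)\to e^{cV}$ (which holds almost surely, pathwise, by \eqref{eq:alpharegvar}) and using $\xi((-s,\infty))=\psi((-(s+V),\infty))$, one gets $\xi((-s,\infty))/\alpha(s)\overset{prob.}{\longrightarrow}u'e^{cV}$. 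Being an in-probability limit of $\sigma(\xi)$-measurable variables, $u'e^{cV}$, and hence $V=Z_{g}-Z_{G}$, is almost surely equal to a measurable functional of $\xi$; in particular $V\perp Z_{G}$, since $Z_{G}\perp\xi$.

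To finish, $Z_{g}=V+Z_{G}$ is now a sum of two independent random variables, and $Z_{G}\sim{\rm Gum}(cy)$, so the distribution function $g$ of $Z_{g}$ satisfies $g(w)=\int{\rm Gum}(c(w-z))\,d\mu_{V}(z)=\mathbb{E}\{{\rm Gum}(c(w-V))\}$; this is \eqref{eq:g_form} with $c=c_{\xi}$, so (SUS) holds with $c=c_{\xi}$ (and with $Z$ distributed as $V$), and by Corollary~\ref{cor:g=00003DG*Z} one also recovers (SDP) with $D=\hat{D}(\xi)$. I expect the main obstacle to be the squeeze step, i.e.\ making the merely in-probability LLN for $\psi$ stable under the random time-shift $s\mapsto s+V$; once that is in place, the identification of $V$ (up to the independent Gumbel $Z_{G}$) as a functional of $\xi$, and hence the Gumbel-mixture form of $g$, follows at once.
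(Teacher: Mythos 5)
Your argument is correct, and it reaches (SUS) by a route that genuinely differs from the paper's, although both rest on the same coupling from Lemma \ref{lem:23} and the same rigidity idea (the LLN pins down random shifts). The paper works with the representation $\theta_{Z_G}\theta_{Z'}\psi'=\theta_{Z_g}\psi$ (with $Z_G\perp(Z',\psi')$, $Z_g\perp\psi$), extracts an almost surely convergent subsequence from the LLN for $\psi$, transfers it to $\psi'$ shifted by $Z_g-Z_G-Z'$, and shows by a two-sided contradiction (using monotonicity of $y\mapsto\psi((-y,\infty))$ and \eqref{eq:alpharegvar}) that $Z_g-Z_G-Z'=0$ a.s., whence $Z_g=Z_G+Z'$ with $Z_G\perp Z'$. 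You instead keep only the coupling $\theta_{Z_G}\xi=\theta_{Z_g}\psi$, i.e.\ $\xi=\theta_V\psi$ with $V=Z_g-Z_G$, push the in-probability LLN for $\psi$ through the random shift to get $\xi((-s,\infty))/\alpha(s)\to u'e^{cV}$ in probability, and conclude that $V$ is a.s.\ equal to a $\sigma(\xi)$-measurable variable, hence independent of $Z_G$ because $Z_G\perp\xi$; this avoids the auxiliary pair $(\psi',Z')$ entirely and identifies the mixing law in \eqref{eq:g_form} directly as the law of $V$. What each approach buys: the paper's gives the stronger a.s.\ identity $Z_g=Z_G+Z'$ with the shift of part (b), while yours is more self-contained and makes transparent why independence holds. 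One caveat on the step you yourself flag: the appeal to ``local uniformity'' of \eqref{eq:alpharegvar} is not justified as stated ($\alpha$ is not assumed monotone or measurable, so no uniform convergence theorem for regular variation is available), but it is also not needed --- the squeeze goes through using \eqref{eq:alpharegvar} only at the finitely many grid points of a partition of $[-M,M]$ (with $\mathbb{P}\{|V|>M\}$ small), monotonicity of $y\mapsto\psi((-y,\infty))$, and the pathwise convergence $\alpha(s+V)/\alpha(s)\to e^{cV}$, and, crucially, it requires no independence of $V$ from $\psi$ (which indeed fails). With that implementation spelled out, your proof is complete and correct.
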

\begin{proof}

Throughout the proof let $c=c_{\xi}$. Let $\psi=\psi\left(\xi\right)\sim DPPP\left(e^{-cx}dx,D\left(\xi\right)\right)$.
From Lemma \ref{lem:23} we know that $\xi\overset{d}{=}\theta_{Z'}\psi'$
for some random variable $Z'$ and that $\theta_{Z_{G}}\theta_{Z'}\psi'\overset{d}{=}\theta_{Z_{g}}\psi$,
with
\begin{equation}
Z_{G}\perp\left(Z',\psi'\right)\mbox{\,\,\, and\,\,\,}Z_{g}\perp\psi,\label{eq:indep}
\end{equation}
where $\psi'\overset{d}{=}\psi$ and where $Z_{G}$ and $Z_{g}$ are
random variables with cumulative distribution functions ${\rm {Gum}}^{\left(c\right)}$
and $g$, respectively.

From the same argument as in the proof of Lemma \ref{lem:23}, appealing
to Corollary 6.11 of \cite{Kallenberg2nd} we can define all of the
variables on the same probability space such that (\ref{eq:indep})
is preserved and such that
\begin{equation}
\theta_{Z_{G}}\theta_{Z'}\psi'=\theta_{Z_{g}}\psi\,,\,\,\,\mbox{almost surely}.\label{eq:coupling1}
\end{equation}

Since $D(\xi)$ satisfies the LLN, there exists a sequence $y_n\in \mathbb R$ that increases to $\infty$, such that
$\psi\left(\left(-y_n,\infty\right)\right)/\alpha(y_n)\to u$ almost surely, as $n\to\infty$, for some $u>0$. Note that, almost surely,
\begin{align*}
\frac{\psi\left(\left(-y_n,\infty\right)\right)}{\alpha(y_n)}&=\frac{\theta_{Z_g}\psi\left(\left(-y_n+Z_g,\infty\right)\right)}{\alpha(y_n)} \\
&=\frac{\theta_{Z_G+Z'}\psi'\left(\left(-y_n+Z_g,\infty\right)\right)}{\alpha(y_n)}=\frac{\psi'\left(\left(-y_n+Z_g-Z_G-Z',\infty\right)\right)}{\alpha(y_n)}\, ,
\end{align*}
and therefore the same convergence holds for the rightmost term of the equation.

We shall prove by contradiction that $Z_g-Z_G-Z'\geq 0$ almost surely. Assume otherwise, i.e., there exists $\epsilon >0$ such that
\[
\mathbb P\{Z_g-Z_G-Z'>\epsilon\}>0\,.
\]
Since $\psi\left(\left(-y,\infty\right)\right)$ is increasing in $y$, by \eqref{eq:alpharegvar}, almost surely on the event $\{Z_g-Z_G-Z'>\epsilon\}$,
\[
\liminf_{n\to\infty} \frac{\psi\left(\left(-(y_n-\epsilon),\infty\right)\right)}{\alpha(y_n-\epsilon)}\geq\lim_{n\to\infty}\frac{\psi\left(\left(-y_n+Z_g-Z_G-Z',\infty\right)\right)}{\alpha(y_n-\epsilon)}=ue^{c\epsilon}\, ,
\]
and therefore, for large enough $N$,
\[
\mathbb P\left\{ \forall n\geq N:\ \frac{\psi\left(\left(-(y_n-\epsilon),\infty\right)\right)}{\alpha(y_n-\epsilon)} \geq ue^{c\epsilon}/2 \right\}>0\, ,
\]
in contradiction to the LLN. Hence $Z_g-Z_G-Z'\geq 0$ almost surely.

One obtains similarly that $Z_g-Z_G-Z'\leq 0$
and thus $Z_g=Z_G+Z'$, almost surely. Since $Z_G\perp Z'$, this implies
that $Z_g$ is distributed like a shifted Gumbel, i.e. Condition (SUS) holds.
The proof is completed.
\end{proof}
We finish the proof of the direct part of Theorem \ref{thm:main}
with the following.
\begin{lem}
Assuming $D\left(\xi\right)$ has exponential
moments, part $(d)$ of the direct part of Theorem \ref{thm:main}
holds.\end{lem}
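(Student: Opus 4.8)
The plan is to reduce this case to Lemma \ref{lem:cLLN} by verifying that, once $D(\xi)$ has exponential moments, it satisfies an LLN with rate $c_{\xi}$. Write $c=c_{\xi}$. By the remark following the definition of the LLN---specifically by the representation \eqref{eq:llnsum}---it suffices to prove that $\mathbb{E}\,\theta_{X}D(\xi)\big((0,\infty)\big)<\infty$ for $X\sim\exp(c)$ independent of $D(\xi)$; indeed, \eqref{eq:llnsum} exhibits $\psi_{c}((-y,\infty))$ as a Poisson-random sum of i.i.d.\ nonnegative copies of $\theta_{X}D(\xi)((0,\infty))$, so if this mean is finite the weak law of large numbers gives \eqref{eq:alphalim} with $\alpha(y)=e^{cy}/c$, and Lemma \ref{lem:cLLN} then yields (SUS).

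Since $\mathcal{M}(D(\xi))=0$ a.s., the process $D(\xi)$ sits in $(-\infty,0]$ and $\theta_{X}D(\xi)((0,\infty))=D(\xi)\big((-X,0]\big)$, so integrating out $X$ gives
\[
\mathbb{E}\,\theta_{X}D(\xi)\big((0,\infty)\big)=\mathbb{E}\!\int_{(-\infty,0]}e^{cx}\,dD(\xi)(x).
\]
To bound the right-hand side I would split $(-\infty,0]$ into the windows $I_{k}=(-(k+1)\epsilon,-k\epsilon]$, $k\ge0$, with $\epsilon>0$ as in the exponential-moment hypothesis. On $I_{k}$ one has $e^{cx}\le e^{-ck\epsilon}$, hence
\[
\mathbb{E}\!\int_{(-\infty,0]}e^{cx}\,dD(\xi)(x)\le\sum_{k\ge0}e^{-ck\epsilon}\,\mathbb{E}\big[D(\xi)(I_{k})\big],
\]
and it remains to bound $\mathbb{E}[D(\xi)(I_{k})]$ at a rate $o(e^{ck\epsilon})$ so that the series converges. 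The hypothesis controls the top window $D(\xi)(I_{0})=D(\xi)((-\epsilon,0])$ (the contribution of a possible atom at $0$ being absorbed by shrinking $\epsilon$), while the quasi-stationarity built into the construction---Corollary \ref{cor:20}, which says that $\theta_{-y}\xi^{\leftrightarrow}$ conditioned on $\mathcal{M}(\xi^{\leftrightarrow})>y$ has the law of $\xi^{\leftrightarrow}$, together with $\mathcal{M}(\xi^{\leftrightarrow})\sim\exp(c)$ and $\xi^{\leftrightarrow}\overset{d}{=}\theta_{X}D(\xi)$ from Corollary \ref{cor:22}---is the mechanism that transports this control to every window $I_{k}$ (the moments of $D(\xi)(I_{k})$ being recovered from those of the corresponding windows of $\xi^{\leftrightarrow}$ by averaging over the $\exp(c)$-distributed location of $\mathcal{M}(\xi^{\leftrightarrow})$).

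I expect this transfer step to be the crux: one must genuinely use the \emph{exponential} moments of the top window---not merely its finite mean---to bound the moments of the lower windows of $\xi^{\leftrightarrow}$. As a fallback I would instead try to verify the hypothesis of part $(c)$, namely $\mathbb{P}\{\int e^{\beta x}\,d\xi(x)<\infty\}=1$ for all $\beta>c$ (equivalent to $L_{\xi}[e^{\beta x}\,|\,\cdot\,]\approx g$ by Corollary \ref{cor:frz_specific_func}): by Lemma \ref{lem:23} this reduces to the same statement for $\psi\sim DPPP(e^{-cx}dx,\hat{D}(\xi))$; writing $\int e^{\beta x}\,d\psi=\sum_{j}e^{\beta\zeta_{j}}Y_{j}$ as a functional of the Poisson process $\{(\zeta_{j},Y_{j})\}$ of intensity $e^{-cx}dx\otimes\mu_{Y}$, with $Y_{j}$ i.i.d.\ copies of $\int e^{\beta x}\,d\hat{D}(\xi)$, Campbell's theorem reduces a.s.\ finiteness to the fractional-moment estimate $\mathbb{E}\big[(\int e^{\beta x}\,dD(\xi))^{c/\beta}\big]<\infty$, and since $c/\beta<1$ the same window decomposition---now using subadditivity of $t\mapsto t^{c/\beta}$ on $[0,\infty)$---produces this bound from the exponential-moment hypothesis. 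Either way, part $(c)$ or Lemma \ref{lem:cLLN} then closes the argument.
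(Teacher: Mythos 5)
Your reduction has a genuine gap, and it sits exactly where you yourself flag ``the crux''. Both of your routes (the LLN route via $\mathbb{E}\,\theta_X D(\xi)((0,\infty))=\mathbb{E}\int_{(-\infty,0]}e^{cx}\,dD(\xi)(x)<\infty$, and the fallback via the fractional moment $\mathbb{E}\bigl[(\int e^{\beta x}dD(\xi))^{c/\beta}\bigr]<\infty$, whose Campbell-theorem reduction is itself fine) require summable control of $\mathbb{E}\bigl[D(\xi)(I_k)\bigr]$ (or of its fractional moments) for \emph{every} window $I_k=(-(k+1)\epsilon,-k\epsilon]$. The hypothesis of the lemma controls only the top window: $\mathbb{E}\exp\{tD(\xi)(-\epsilon,0)\}<\infty$ for some $t,\epsilon>0$. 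The ``quasi-stationarity'' you invoke (Corollary \ref{cor:20} together with $\xi^{\leftrightarrow}\overset{d}{=}\theta_X D(\xi)$, Corollary \ref{cor:22}) is just a restatement of the representation $\theta_XD(\xi)$ with $X\sim\exp(c)$ independent of $D(\xi)$; it says nothing about the law of $D(\xi)$ below its maximum, so it cannot transport moment bounds from $I_0$ to $I_k$, $k\geq1$. Nothing in Assumption \ref{Assumption:regularity-1} forces $\mathbb{E}\,D(\xi)(I_k)$ to be finite, let alone $o(e^{ck\epsilon})$: a decoration whose mass at distance $2\epsilon$ below the maximum has infinite mean is perfectly compatible with exponential moments of the top window. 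So exponential moments do not imply the LLN (nor the hypothesis of part (c)), which is precisely why the paper lists them as two separate alternatives in Assumption \ref{Assumption:decoration} and proves the two cases of part (d) by different arguments.

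The paper's proof of this lemma never tries to integrate the whole decoration. Starting from the coupling of Lemma \ref{lem:23}, $\theta_{Z_G}\theta_{Z'}\psi'=\theta_{Z_g}\psi$ with $Z_G\perp(Z',\psi')$ and $Z_g\perp\psi$, it conditions on the events $A_n^{\epsilon}=\{\eta:\ \mathcal{M}(\eta)=0,\ \eta([-\epsilon,0])\geq n\}$ for the max-normalized process $\pi(\psi)=\pi(\psi')$, and shows that, given $\pi(\psi)\in A_n^{\epsilon}$, the maximum $\mathcal{M}(\psi)$ concentrates around a deterministic level $c_n=-\log(nm^*)$. This concentration is where the exponential moments enter: they give a Cram\'er/G\"artner--Ellis large deviation estimate for the number of points within $\epsilon$ of the top of the DPPP, and a Laplace-type analysis of the resulting rate function pins down the conditional location of the maximum. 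From $Z_G+X_n-c_n\overset{d}{\to}Z_g$ with the independence preserved, one gets $Z_g\overset{d}{=}Z_G+Z$ with $Z_G\perp Z$, i.e.\ (SUS). If you want to repair your write-up within its own logic, you would have to add the hypothesis $\mathbb{E}\int e^{cx}\,dD(\xi)(x)<\infty$ (or the corresponding fractional-moment bound), which is strictly stronger than what the lemma assumes; to prove the lemma as stated you need an argument of the paper's type that uses the top-window exponential moments only locally, near the maximum.
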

\begin{proof}
As in the proof of Lemma \ref{lem:cLLN}, abbreviate $c=c_{\xi}$ and
couple the shifts $Z_G$, $Z_g$, $Z'$ and point processes $\psi$, $\psi'$ defined there, so that \eqref{eq:indep} and \eqref{eq:coupling1} hold.
%

For any point process $\eta$ such that $\mathcal{M}\left(\eta\right)<\infty$
almost surely, we define $\pi\left(\eta\right)\triangleq\theta_{-\mathcal{M}\left(\eta\right)}\eta$.
That is, $\pi\left(\eta\right)$ is $\eta$ shifted so its rightmost
particle is exactly at zero almost surely. Clearly,
\[
\pi\left(\theta_{Z_{G}}\theta_{Z'}\psi'\right)=\pi\left(\psi'\right)\mbox{\,\,\ and\,\,}\pi\left(\theta_{Z_{g}}\psi\right)=\pi\left(\psi\right),
\]
and thus $\pi\left(\psi'\right)=\pi\left(\psi\right)$ almost surely.

Now, suppose $A\subset\mathcal{N}$ is a measurable set such that
$\left\{ \pi\left(\psi\right)\in A\right\} $ has positive probability.
Note that, since $\pi\left(\psi'\right)=\pi\left(\psi\right)$ almost
surely, the probability of the symmetric difference $\left\{ \pi\left(\psi\right)\in A\right\} \bigtriangleup\left\{ \pi\left(\psi'\right)\in A\right\} $
is $0$. Therefore, conditioned either on $\left\{ \pi\left(\psi\right)\in A\right\} $
or on $\left\{ \pi\left(\psi'\right)\in A\right\} $, the random vector
$\left(Z_{G},Z',\mathcal{M}\left(\psi'\right)\right)$ has the same
distribution.

Define $X_{A}$ and $Y_{A}$ to be random variables distributed as
$Z'+\mathcal{M}\left(\psi'\right)$ and $\mathcal{M}\left(\psi\right)$
conditioned on $\left\{ \pi\left(\psi\right)\in A\right\} $, respectively.
Then from (\ref{eq:indep}), by considering the rightmost particle
of the point processes in (\ref{eq:coupling1}),
\[
Z_{G}+X_{A}\overset{d}{=}Z_{g}+Y_{A},\,\,\mbox{where}\,\, Z_{G}\perp X_{A}\,\,\mbox{and}\,\, Z_{g}\perp Y_{A}.
\]

Let $A_{n}\subset\mathcal{N}$, $n\geq1$, be a sequence of measurable
sets such that $\left\{ \pi\left(\psi\right)\in A_{n}\right\} $ has
positive probability and abbreviate $Y_{n}=Y_{A_{n}}$, $X_{n}=X_{A_{n}}$.
Suppose that $Y_{n}-c_{n}\overset{d}{\rightarrow}0$ as $n\rightarrow\infty$
for some deterministic sequence $c_{n}\in\mathbb{R}$. Then $Z_{G}+X_{n}-c_{n}\overset{d}{\rightarrow}Z_{g}$
as $n\rightarrow\infty$ (with $Z_{G}\perp X_{n}-c_{n}$ for any $n\geq1$),
thus the characteristic function of $Z_{g}$ satisfies
\[
\mathbb{E}e^{-itZ_{g}}=\lim_{n\rightarrow\infty}\mathbb{E}e^{-it\left(Z_{G}+X_{n}-c_{n}\right)}=\mathbb{E}e^{-itZ_{G}}\lim_{n\rightarrow\infty}\mathbb{E}e^{-it\left(X_{n}-c_{n}\right)}.
\]

It is easily seen from the convergence of $Z_{G}+X_{n}-c_{n}$ and
the independence of $Z_{G}$ on $X_{n}-c_{n}$ that $X_{n}-c_{n}$,
$n\geq1$, is tight. By the continuity theorem (cf. Theorem 2, p.
481 of \cite{Feller2nd}), this implies that the limit of $\mathbb{E}e^{-it\left(X_{n}-c_{n}\right)}$
is the characteristic function of some random variable $Z$ and thus
$Z_{g}=Z_{G}+Z$ with $Z_{G}\perp Z$.

This is exactly what we need to show, hence all that remains is to
construct the sets $A_{n}$ as above. We shall show that for
\[
A_{n}^{\epsilon}\triangleq\left\{ \eta\in\mathcal{N}:\,\mathcal{M}\left(\eta\right)=0,\,\eta\left(\left[-\epsilon,0\right]\geq n\right)\right\} ,
\]
with $\epsilon>0$ being a parameter to be determined below, the convergence
$Y_{n}-c_{n}\overset{d}{\rightarrow}0$ is achieved. Of course, $\mathbb{P}\left\{ \pi\left(\psi\right)\in A_{n}^{\epsilon}\right\} >0$
for any $\epsilon>0$, $n\geq1$, since with positive probability
there are $n$ atoms of the Poisson process corresponding to $\psi$
in the interval $\left(\mathcal{M}\left(\xi\right)-\epsilon,\mathcal{M}\left(\xi\right)\right)$.

For simplicity we shall assume $c=1$, the general case follows by scaling.
Recall that the decoration $D\left(\xi\right)$ satisfies $\mathcal{M}\left(D\left(\xi\right)\right)=0$,
almost surely. Therefore,
the density of $\mathcal{M}\left(\psi\right)$ relative
to Lebesgue measure is given by $\exp\left\{ -e^{-x}-x\right\} $.

By conditioning on $\mathcal{M}\left(\psi\right)$, for any Borel
set $B\subset\mathbb{R}$,
\begin{align}
\mu_{n}\left(B\right) & \triangleq\mathbb{P}\left\{ \mathcal{M}\left(\psi\right)\in B\,\left|\,\pi\left(\psi\right)\in A_{n}^{\epsilon}\right.\right\} \nonumber \\
 & =\frac{\int_{B}\exp\left\{ -e^{-x}-x\right\} \mathbb{P}\left\{ \left(\eta_{x}+\theta_{x}D_{0}\right)\left(\left[x-\epsilon,x\right]\right)\geq n\right\} dx}{\int_{\mathbb{R}}\exp\left\{ -e^{-x}-x\right\} \mathbb{P}\left\{ \left(\eta_{x}+\theta_{x}D_{0}\right)\left(\left[x-\epsilon,x\right]\right)\geq n\right\} dx},\label{eq:cndprb}
\end{align}
where $\eta_{x}\sim DPPP\left(e^{-cy}\cdot\mathbf{1}_{\left(-\infty,x\right)}dy,D\left(\xi\right)\right)$
and $D_{0}$ is an independent copy of $D\left(\xi\right)$.

The number of the decorations composing $\eta_{x}$ that attribute
a positive measure to the interval $\left[x-\epsilon,x\right]$ is
a Poisson variable of parameter $e^{-x}\left(e^{\epsilon}-1\right)$.
Conditioned on this number, the decorations are independent, and each
is equal in distribution to $\theta_{\bar{S}\left(\epsilon\right)}D\left(\xi\right)$
where the p.d.f.  of the random variable
$\bar{S}\left(\epsilon\right)$ is
$$e^{-s}\mathbf{1}_{\left[x-\epsilon,x\right]}\left(s\right)ds/\left(e^{-x}\left(e^{\epsilon}-1\right)\right).$$
Note also
that $\bar{S}\left(\epsilon\right)\overset{d}{=}x+S\left(\epsilon\right)$
where the p.d.f. of $S\left(\epsilon\right)$ is $e^{-s}\mathbf{1}_{\left[-\epsilon,0\right]}\left(s\right)ds/\left(\left(e^{\epsilon}-1\right)\right)$.
Hence, if
\begin{align*}
U_{0}^{\epsilon} & \overset{d}{=}\theta_{x}D_{0}\left(\left[x-\epsilon,x\right]\right)=D_{0}\left(\left[-\epsilon,0\right]\right),\\
W_{i}^{\epsilon} & \overset{d}{=}\theta_{\bar{S}\left(\epsilon\right)}D\left(\xi\right)\left(\left[x-\epsilon,x\right]\right)=\theta_{S\left(\epsilon\right)}D\left(\xi\right)\left(\left[-\epsilon,0\right]\right),\,\,\,\forall i\geq1,
\end{align*}
are independent random variables, with $\left\{ W_{i}^{\epsilon}\right\} _{i\geq1}$
an i.i.d sequence, then
\[
\left(\eta_{x}+\theta_{x}D_{0}\right)\left(\left[x-\epsilon,x\right]\right)\overset{d}{=}U_{0}^{\epsilon}+\sum_{i=1}^{\bar{N}_{x}}W_{i}^{\epsilon},
\]
where $\bar{N}_{x}^{\epsilon}\sim\mbox{Pois}\left(e^{-x}\left(e^{\epsilon}-1\right)\right)$
is a random variable independent of $U_{0}$, $W_{i}$, $i\geq1$.

From this, by a change of variables $y=e^{-x}/n$, we obtain from
(\ref{eq:cndprb}), for any $y_{n}^{*}$, $\delta>0$,
\begin{equation}
\mu_{n}\left(\left[-\log\left(ny_{n}^{*}\right)-\delta,-\log\left(ny_{n}^{*}\right)+\delta\right]\right)=\frac{\int_{e^{-\delta}y_{n}^{*}}^{e^{\delta}y_{n}^{*}}h\left(y,n\right)dy}{\int_{0}^{\infty}h\left(y,n\right)dy},\label{eq:33}
\end{equation}
where
\[
h\left(y,n\right)\triangleq e^{-ny}\mathbb{P}\left\{ U_{0}^{\epsilon}+\sum_{i=1}^{N_{ny}}W_{i}^{\epsilon}\geq n\right\} =e^{-ny}\mathbb{P}\left\{ \frac{1}{n}U_{0}^{\epsilon}+\frac{1}{n}\sum_{i=1}^{N_{ny}}W_{i}^{\epsilon}\geq1\right\}
\]
and where $N_{ny}^{\epsilon}\sim\mbox{Pois}\left(ny\left(e^{\epsilon}-1\right)\right)$
is a random variable independent of $U_{0}^{\epsilon}$, $W_{i}^{\epsilon}$,
$i\geq1$.

In order to prove the convergence of $Y_{n}-c_{n}\overset{d}{\rightarrow}0$
as $n\rightarrow\infty$, it suffices to show that there exist
$y_{n}^{*}\in\left(0,\infty\right)$, $n\geq1$, such that the ratio
of (\ref{eq:33}) converges to $1$ as $n\rightarrow\infty$, for
any $\delta>0$; one then sets $c_{n}=-\log\left(ny_{n}^{*}\right)$.
In our proof, we will choose $y_n=m^*$ where $m^*$ is an $n$-independent
constant to be determined below.

We first choose the
parameter $\epsilon>0$. Recall that, by assumption,
there exists $\epsilon_{0}>0$ such that for any $\epsilon\in\left[0,\epsilon_{0}\right]$,
there exists $t>0$ for which $\mathbb{E}\left[\exp\left\{ tD\left(\xi\right)\left(-\epsilon,0\right)\right\} \right]<\infty$.
For each such $\epsilon$, we define
\[
\lambda^{*}\left(U_{0}^{\epsilon}\right)\triangleq\inf\left\{ \lambda:\,\mathbb{E}\left\{ \exp\left(\lambda U_{0}^{\epsilon}\right)\right\} =\infty\right\} ,
\]
with $\lambda^{*}\left(U_{0}^{\epsilon}\right)=\infty$ in case
the set is empty. Define $\lambda^{*}\left(W_{1}^{\epsilon}\right)$
similarly and note that since $0\leq W_{1}^{\epsilon}\leq U_{0}^{\epsilon}$
a.s., $\lambda^{*}\left(U_{0}^{\epsilon}\right)\leq\lambda^{*}\left(W_{1}^{\epsilon}\right)$.

If $\lambda^{*}\left(U_{0}^{\epsilon_{0}}\right)=\infty$, and therefore
also $\lambda^{*}\left(W_{1}^{\epsilon_{0}}\right)=\infty$, we set
$\epsilon=\epsilon_{0}$. Otherwise, note that $\lambda^{*}\left(U_{0}^{\epsilon}\right)$
is a bounded, decreasing function of $\epsilon$ on $\left[0,\epsilon_{0}\right]$,
thus there exists some $\epsilon'>0$ at which it is continuous, and
we set $\epsilon=\epsilon'$. For any $\lambda>\lambda^{*}\left(U_{0}^{\epsilon}\right)$,
\begin{align*}
\mathbb{E}\left\{ \exp\left(\lambda W_{1}^{\epsilon}\right)\right\}  & \geq\mathbb{P}\left\{ S\left(\epsilon\right)\geq-\delta\right\} \mathbb{E}\left\{ \exp\left(\lambda\cdot\theta_{-\delta}D\left(\xi\right)\left(\left[-\epsilon,0\right]\right)\right)\right\} \\
 & =\mathbb{P}\left\{ S\left(\epsilon\right)\geq-\delta\right\} \mathbb{E}\left\{ \exp\left(\lambda\cdot U_{0}^{\epsilon-\delta}\right)\right\} .
\end{align*}
Choosing $\delta>0$ small enough such that $\lambda>\lambda^{*}\left(U_{0}^{\epsilon-\delta}\right)$,
we conclude that $\mathbb{E}\left\{ \exp\left(\lambda W_{1}^{\epsilon}\right)\right\} =\infty$.
It follows that $\lambda^{*}\left(W_{1}^{\epsilon}\right)=\lambda^{*}\left(U_{0}^{\epsilon}\right)$.
Henceforth we fix $\epsilon$ as above,
abbreviate $\lambda^{*}\triangleq\lambda^{*}\left(U_{0}^{\epsilon}\right)$,
and suppress $\epsilon$ from the notation.

For each $y>0$, define
the logarithmic
moment generating function
\begin{equation}
  \label{eq:lambda_y}
  \Lambda_{y}\left(\lambda\right)\triangleq\log\mathbb{E}\left\{ \exp\left\{ \lambda\sum_{i=1}^{N_{y}}W_{i}\right\} \right\}
  =y\left(e^{\epsilon}-1\right)\left(\mathbb{E}\left\{
    e{}^{\lambda W_{1}}\right\} -1\right)\,, (\lambda\in \mathbb{R})
  \end{equation}
    and its Fenchel-Legendre transform
    \[
  \Lambda_{y}^{*}\left(x\right)\triangleq\sup_{\lambda\in\mathbb{R}}\left\{ \lambda x-\Lambda_{y}\left(\lambda\right)\right\}\,, (x\in \mathbb{R}) .
\]
We next claim that
\begin{equation}
  \label{eq-prehLDP}
 \lim_{n\rightarrow\infty}\frac{1}{n}\log\mathbb{P}\left\{ \frac{1}{n}U_{0}+
 \frac{1}{n}\sum_{i=1}^{N_{ny}}W_{i}\geq1\right\}=
 -\inf_{x\geq1}\Lambda_{y}^{*}\left(x\right).
\end{equation}
Indeed, introduce the representation
\begin{equation}
  \label{eq-rep}
\sum_{i=1}^{N_{ny}}W_{i}\overset{d}{=}\sum_{j=1}^{n}\sum_{i=1}^{N_{y}^{\left(j\right)}}W_{i}^{\left(j\right)}
\end{equation}
with the i.i.d. sequences of variables (in $i,j$)
$W_{i}^{\left(j\right)}\overset{d}{=}W_{1}$ and
$N_{y}^{\left(j\right)}\overset{d}{=}N_{y}$ independent of each other.
The lower bound in \eqref{eq-prehLDP} follows from  $U_0\geq 0$
using Cram\'{e}r's theorem
\cite[Corollary 2.2.19]{LDbook}:
\begin{align*}
 & \negthickspace\negthickspace\negthickspace\negthickspace
 \liminf_{n\rightarrow\infty}\frac{1}{n}\log\mathbb{P}\left\{ \frac{1}{n}U_{0}+\frac{1}{n}\sum_{i=1}^{N_{ny}}W_{i}\geq1\right\} \\
 & \geq\liminf_{n\rightarrow\infty}\frac{1}{n}\log\mathbb{P}\left\{ \frac{1}{n}\sum_{j=1}^{n}\sum_{i=1}^{N_{y}^{\left(j\right)}}W_{i}^{\left(j\right)}\geq1\right\} =-\inf_{x\geq1}\Lambda_{y}^{*}\left(x\right).
\end{align*}
To see the corresponding upper bound, introduce the logarithmic
%
moment generating function
\[
  \bar{\Lambda}_{n}^{\left(y\right)}\left(\lambda\right)\triangleq\log\mathbb{E}\left\{ \exp\left\{ \lambda\left(U_{0}/n+\sum_{i=1}^{N_{ny}}W_{i}/n\right)\right\} \right\} \,\,\,\, (\lambda\in \mathbb{R}),
\]
and its scaled limit and Fenchel-Legendre transform
\[
  \bar{\Lambda}_{y}\left(\lambda\right)\triangleq\lim_{n\rightarrow\infty}\frac{1}{n}\bar{\Lambda}_{n}^{\left(y\right)}\left(n\lambda\right)\,,
  \,\,\,\,
  \bar{\Lambda}_{y}^{*}\left(x\right)\triangleq\sup_{\lambda\in\mathbb{R}}\left\{ \lambda x-\bar{\Lambda}_{y}\left(\lambda\right)\right\} \,\,\,\, (x\in \mathbb{R}).
  \]
  For $\lambda<\lambda^*$ we have (using the representation \eqref{eq-rep}) that
  $\Lambda_y(\lambda)=\bar{\Lambda}_y(\lambda)$. Obviously, for
  $\lambda>\lambda^*$ we have
  $\Lambda_y(\lambda)=\bar{\Lambda}_y(\lambda)=\infty$.
  Thus, $\Lambda_y$ and $\bar{\Lambda}_y$ coincide in the
  interior of their (common) domain;
  since both are convex, they are necessarily
  continuous in the interior of their domain and
  $$\limsup_{\lambda\nearrow\lambda^*} \Lambda_y(\lambda)\leq \Lambda_y(\lambda^*),\;
  \limsup_{\lambda\nearrow\lambda^*} \bar{\Lambda}_y(\lambda)\leq \bar{\Lambda}_y
  (\lambda^*).$$
In particular,
for any $x\in\mathbb{R}$,
\begin{align*}
\bar{\Lambda}_{y}^{*}\left(x\right) & =\sup_{\lambda\in\mathbb{R}}\left\{ \lambda x-\bar{\Lambda}_{y}\left(\lambda\right)\right\} =\sup_{\lambda<\lambda^{*}}\left\{ \lambda x-\bar{\Lambda}_{y}\left(\lambda\right)\right\} \\
 & =\sup_{\lambda<\lambda^{*}}\left\{ \lambda x-\Lambda_{y}\left(\lambda\right)\right\} =\sup_{\lambda\in\mathbb{R}}\left\{ \lambda x-\Lambda_{y}\left(\lambda\right)\right\} =\Lambda_{y}^{*}\left(x\right).
\end{align*}

Applying the upper bound of
the G\"{a}rtner\textendash{}Ellis theorem  \cite[Theorem 2.3.6]{LDbook},
we obtain
\[
\limsup_{n\rightarrow\infty}\frac{1}{n}\log\mathbb{P}\left\{ U_{0}/n+\sum_{i=1}^{N_{ny}}W_{i}/n\geq1\right\} \leq-\inf_{x\geq1}\Lambda_{y}^{*}\left(x\right),
\]
which completes the proof of \eqref{eq-prehLDP}.

Note that
with  $\bar{x}\triangleq\left(e^{\epsilon}-1\right)\mathbb{E}W_{1}$,
we have  by \cite[Lemma 2.2.5]{LDbook} that
$\Lambda_1^*(\bar x)=0$ and that $\Lambda_1^*(x)$ is nondecreasing for $x\geq \bar x$. Thus, we can
rewrite \eqref{eq-prehLDP} as
\begin{eqnarray}
  \label{eq-hLDP}
  \nonumber
\hat{h}\left(y\right)&\triangleq&
\lim_{n\rightarrow\infty}\frac{1}{n}\log h\left(y,n\right)=-y-\inf_{x\geq1}\Lambda_{y}^{*}\left(x\right)=-y-\inf_{x\geq1/y}\Lambda_{1}^{*}\left(x\right)\\
&=&
\begin{cases}
-y-\Lambda_{1}^{*}\left(1/y\right) & \mbox{, if }y\in\left(0,1/\bar{x}\right)\\
-y & \mbox{, if }y\in\left[1/\bar{x},\infty\right),
\end{cases}
\end{eqnarray}
where the next-to-last equality follows from (\ref{eq:lambda_y}) and the
definition of the Fenchel-Legendre transform.

Before returning to the evaluation of \eqref{eq:33}, we analyze the function
$\hat h$. As a preliminary, note that $\Lambda_1^*$ is finite and continuous
 in a neighborhood of $\bar x$ because $0$ is in $\mathcal D_{\Lambda_1}^o$, the interior of
the domain of $\Lambda_1$, and $\sum_{i=1}^{N_y}W_i$ is not deterministic, and thus $\bar x \in \{ {\Lambda_1} '(\lambda): \, \lambda \in \mathcal D_{\Lambda_1}^o \}$, so
that \cite[Lemma 2.2.5(c) and Exercise 2.2.24]{LDbook} can be applied.

Let $I_1=(0,1/\bar x)$ and $I_2=[1/\bar x,\infty)$. We  have that $\hat{h}$
	is strictly decreasing on $I_2$. Further, since $1/y$ is strictly convex in $y$, $\hat{h}(y)$ is
	strictly concave in $1/y$ on $I_1$. Thus, since $\lim_{y\searrow 0} \hat{h}(y)=-\infty$,
	$$\sup_{y\in \mathbb{R}} \hat{h}(y)=
	\sup_{z\in [\bar x,\infty)}\hat{h}(1/z)=\hat{h}(m^*)$$
for a unique $m^*\in (0,1/\bar x]$, and, fixing an arbitrary $\delta > 0$,  with $J=(e^{-\delta}m^*,e^{\delta}
m^*)$
we have
	\[
\rho\triangleq
\hat{h}\left(m^*\right)>\kappa\triangleq\sup_{y\in J^{c}}\hat{h}\left(y\right)\,.
\]
Set $\Delta\triangleq\rho-\kappa.$

Let $\mathcal{P}=\left\{ 0=p_{0}<p_{1}<\cdots<p_{k}=T\right\} $ be
a finite partition containing each of the ends of the interval $J$,
such that $\left\Vert \mathcal{P}\right\Vert \triangleq\max_{i\leq k}\left(p_{i}-p_{i-1}\right)<\Delta/6$,
and where $T$ satisfies
$-T<\kappa+\Delta/3$. Define for any point $y\in\left[0,T\right]$,
\[
p^{*}\left(y\right)=\min\left\{ p_{i}:\, p_{i}\geq y\right\} \,\,\mbox{and}\,\, p_{*}\left(y\right)=\max\left\{ p_{i}:\, p_{i}\leq y\right\} .
\]

Set $J_T^c=J^c\cap [0,T]$.
Since $Q_{y}\triangleq\mathbb{P}
\left\{ \frac{1}{n}U_{0}^{\epsilon}+
\frac{1}{n}\sum_{i=1}^{N_{ny}}W_{i}^{\epsilon}\geq1\right\} $
is nondecreasing in $y$, we have the upper bound
\begin{align*}
 & \negthickspace\negthickspace\negthickspace\negthickspace\int_{J_T^c}h\left(y,n\right)dy\\
 & \leq\int_{J_T^c}e^{-np_{*}\left(y\right)}Q_{p^{*}\left(y\right)}dy\leq
 T\sup_{y\in J_T^c}\left\{
	 e^{-np_{*}\left(y\right)}Q_{p^{*}\left(y\right)}\right\} \\
 & \leq T e^{n\left\Vert \mathcal{P}\right\Vert }
 \max_{y\in\left(J_T^c\right)\cap\mathcal{P}}
 \left\{ e^{-ny}Q_{y}\right\} \leq T e^{n\Delta/6}
 \max_{y\in\left(J_T^c\right)\cap\mathcal{P}}h\left(y,n\right),
\end{align*}
and, for any $\alpha < (e^\delta -1)m^*$, the lower bound
$$\int_{J}h\left(y,n\right)dy \geq \int_{m^*}^{m^*+\alpha}h\left(y,n\right)dy \geq \alpha h\left(m^*,n\right) e^{-n\alpha}\,.$$
Therefore, for $n$ large enough,
using the fact that $\mathcal{P}$ is a finite set which does not depend on $n$,
\begin{align*}
\frac{1}{n}\log\left(\int_{J_T^c}h\left(y,n\right)dy\right) & < \kappa+\Delta/3,\\
\frac{1}{n}\log\left(\int_{J}h\left(y,n\right)dy\right) & > \rho-\Delta/3.
\end{align*}

For bounding the integral on $\left(T,\infty\right)$ we use the simple
bound
\[
\frac{1}{n}\log\left(\int_{T}^{\infty}h\left(y,n\right)dy\right)\leq\frac{1}{n}\log\left(\int_{T}^{\infty}e^{-ny}dy\right)\leq -T<\kappa+\Delta/3.
\]

Combining the three bounds we obtain that
\[
\lim_{n\rightarrow\infty}\frac{\int_{J}h\left(y,n\right)dy}{\int_{J^c}h\left(y,n\right)dy}=\infty.
\]
Since $\delta>0$ was arbitrary, with our choice $y_{n}^{*}=m^{*}$,
the ratio of (\ref{eq:33}) converges as $n\rightarrow\infty$ to
$1$, which completes the proof.
\end{proof}

\section{\label{sec:Relation-to-Freezing}From uniqueness to freezing}

In our notation, the freezing phenomenon considered in the physics
literature \cite{CLD,DerSpo,Fyodorov,F&B1,Fyo3,Fyo2,Fyo1} can be
written as
\[
\forall\beta>\beta_{c}:\,\,\lim_{n\to\infty}L_{\xi_{n}}\left[\left.e^{\beta x}\,\right|\,\cdot\,\right]\thickapprox g\left(\cdot\right),\mbox{ where }\xi_{n}\triangleq\sum_{i\leq N\left(n\right)}\delta_{X_{i}\left(n\right)-m_{n}}.
\]
Under the convergence $\xi_{n}\overset{d}{\to}\xi$,
as $n\to\infty$, it is natural to examine
the relations of the condition above with
\[
\forall\beta>\beta_{c}:\,\, L_{\xi}\left[\left.e^{\beta x}\,\right|\,\cdot\,\right]\thickapprox g\left(\cdot\right).
\]

Essentially, if enough is known about a point process $\xi$ for which
$L_{\xi}\left[\left.f\,\right|\,\cdot\,\right]$ is uniquely supported
(i.e., the above holds for functions in $C_{c}^{+}\left(\mathbb{R}\right)$),
one can extend the equivalence $L_{\xi}\left[\left.f\,\right|\,\cdot\,\right]\thickapprox g\left(\cdot\right)$
to functions with unbounded support, e.g. $e^{\beta x}$, by appealing
to Corollary \ref{cor:frz_specific_func} and approximating functions by sequences
in $C_{c}^{+}\left(\mathbb{R}\right)$.

However, since convergence of point processes in distribution (with
respect to the vague topology) depends only on the convergence of
$\left\langle f,\xi_{n}\right\rangle $ for $f$ with bounded support,
some care is needed in concluding that
\[
\lim_{n\to\infty}L_{\xi_{n}}\left[\left.e^{\beta x}\,\right|\,\cdot\,\right]=L_{\xi}\left[\left.e^{\beta x}\,\right|\,\cdot\,\right].
\]
The purpose of this section is to give a sufficient condition for
this in terms of the sequence $\xi_{n}$.

We begin with a remark on the assumption of tightness. Assume that
$\xi_{n}$, $n\geq1$, is a sequence of point processes such that
$\lim_{n\rightarrow\infty}L_{\xi_{n}}\left[\left.f\,\right|\, y\right]\triangleq\varphi\left(\left.f\,\right|\, y\right)$
exists for any $f\in C_{c}^{+}\left(\mathbb{R}\right)$, $\varphi\left(\left.f\,\right|\,\cdot\right)\thickapprox g\left(\cdot\right)$.
Since the limit of the Laplace functionals exists, the sequence converges
in distribution if and only if it is tight. One then may hope that
since we are not only assuming convergence of the Laplace functionals,
but also assume this limit to have a very specific form, tightness
should follow. The next example shows it is not so.
\begin{example}
Let $\xi_{n}=\sum_{x\in n^{-1}\mathbb{Z}^{-}}\delta_{x}$, where $\mathbb{Z}^{-}\triangleq\mathbb{Z}\cap\left(-\infty,0\right]$.
Then for any $f\in C_{c}^{+}\left(\mathbb{R}\right)$, $\lim_{n\rightarrow\infty}L_{\xi_{n}}\left[\left.f\,\right|\, y\right]=\mathbf{1}_{\left[0,\infty\right)}\left(y+\tau_{f}\right)$
with $\tau_{f}=\inf\left\{ x:f\left(x\right)>0\right\} $.

Let $g:\mathbb{\mathbb{R}\rightarrow R}$ be a distribution function
and let $Z$ be a random variable whose distribution function is $g$,
independent of $\xi_{n}$. Defining $\xi_{n}^{\prime}=\theta_{Z}\xi_{n}$
yields
\[
\lim_{n\rightarrow\infty}L_{\xi_{n}^{\prime}}\left[\left.f\,\right|\, y\right]=\mathbb{E}\left\{ \mathbf{1}_{\left[0,\infty\right)}\left(y-Z+\tau_{f}\right)\right\} =\mathbb{P}\left\{ Z\leq y+\tau_{f}\right\} =g\left(y+\tau_{f}\right).
\]

Defining $\xi_{n}^{\prime\prime}=A\xi_{n}^{\prime}$ where $A$ is
a random variable independent of $\xi_{n}^{\prime}$ that is equal
to $0$ with probability $p$ and is equal to $1$ with probability
$1-p$ , we obtain a sequence such that $\lim_{n\rightarrow\infty}L_{\xi_{n}^{\prime\prime}}\left[\left.f\,\right|\, y\right]\approx g_{p}\left(y\right)=p+\left(1-p\right)g\left(y\right)$.
Similarly, by reflecting the point processes around zero we obtain
a sequence of point processes such that $\lim_{n\rightarrow\infty}L_{\xi_{n}^{\prime\prime}}\left[\left.f\,\right|\, y\right]\approx-g_{p}\left(y\right)$.

In particular, for any function $g$ that satisfy the conditions stated
in Lemma \ref{lem:g monotone} we can construct a sequence of point
processes $\xi_{n}^{\prime\prime}$ which is not tight and such that
$\lim_{n\rightarrow\infty}L_{\xi_{n}^{\prime\prime}}\left[\left.f\,\right|\, y\right]\approx g\left(y\right)$
for any $f\in C_{c}^{+}\left(\mathbb{R}\right)$.\qed
\end{example}
In light of the above, we assume in the following convergence of the sequence of processes and not just the convergence of the Laplace functionals.
\begin{lem}
\label{lem:str_frz}Let $\xi_{n}$, $n\geq1$, be a sequence of point
processes such that $\xi_{n}\overset{d}{\rightarrow}\xi$ where $L_{\xi}\left[\left.f\,\right|\,\cdot\,\right]$
is uniquely supported on $\left[g\right]$. Then any nonnegative,
continuous function $f\neq0$ such that
\begin{equation}
\forall y\in\mathbb{R},\,\,\,\lim_{T\rightarrow\infty}\limsup_{n\rightarrow\infty}\mathbb{P}\left\{ \int\theta_{y}f\cdot\mathbf{1}_{\left(-T,T\right)^{c}}d\xi_{n}>\epsilon\right\} =0,\label{eq:1}
\end{equation}
satisfies
\[
\lim_{n\rightarrow\infty}L_{\xi_{n}}\left[\left.f\,\right|\,\cdot\,\right]=L_{\xi}\left[\left.f\,\right|\,\cdot\,\right]\approx g\left(\cdot\right).
\]
\end{lem}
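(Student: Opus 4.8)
The plan is to run a truncation (uniform‑integrability) argument: approximate $f$ from below by compactly supported continuous functions, apply the vague convergence $\xi_{n}\overset{d}{\to}\xi$ to the truncations, and use hypothesis \eqref{eq:1} to control the discarded tail uniformly in $n$. Concretely, fix $y\in\mathbb{R}$; for $T>0$ choose a continuous cutoff $\chi_{T}$ with $\mathbf{1}_{(-T,T)}\le\chi_{T}\le\mathbf{1}_{(-T-1,T+1)}$ and set $f_{T}\triangleq f\chi_{T}$. Since $f$ is continuous, nonnegative and not identically $0$, we have $f_{T}\in C_{c}^{+}(\mathbb{R})$ for all large $T$, and $f_{T}\uparrow f$ pointwise. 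As $\theta_{y}f_{T}\in C_{c}^{+}(\mathbb{R})$ and $\xi_{n}\overset{d}{\to}\xi$ vaguely, for each fixed large $T$ one has $L_{\xi_{n}}[f_{T}|y]\to L_{\xi}[f_{T}|y]$ as $n\to\infty$, while $L_{\xi}[f_{T}|y]\downarrow L_{\xi}[f|y]$ as $T\to\infty$ by monotone convergence.

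Next I would estimate the truncation error uniformly in $n$. Since $0\le f_{T}\le f$ we have $L_{\xi_{n}}[f|y]\le L_{\xi_{n}}[f_{T}|y]$, and splitting $f=f_{T}+(f-f_{T})$ and using $1-e^{-x}\le(1-e^{-\epsilon})+\mathbf{1}_{\{x>\epsilon\}}$ for $x\ge0$ together with $e^{-\langle\theta_{y}f_{T},\xi_{n}\rangle}\le1$,
\[
0\le L_{\xi_{n}}[f_{T}|y]-L_{\xi_{n}}[f|y]\le(1-e^{-\epsilon})+\mathbb{P}\{\langle\theta_{y}(f-f_{T}),\xi_{n}\rangle>\epsilon\}.
\]
Now $f-f_{T}\le f\,\mathbf{1}_{(-T,T)^{c}}$ pointwise, so for $T>|y|$ we get $\langle\theta_{y}(f-f_{T}),\xi_{n}\rangle\le\int\theta_{y}f\cdot\mathbf{1}_{(-(T-|y|),\,T-|y|)^{c}}\,d\xi_{n}$ --- a harmless narrowing of the truncation window, absorbed into $T$. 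Hypothesis \eqref{eq:1} therefore yields $\lim_{T\to\infty}\limsup_{n\to\infty}\big(L_{\xi_{n}}[f_{T}|y]-L_{\xi_{n}}[f|y]\big)\le1-e^{-\epsilon}$, and letting $\epsilon\downarrow0$ this $\limsup$ is $0$. Combining with the previous paragraph through the triangle inequality (first fix $T$ large, then let $n\to\infty$) gives $L_{\xi_{n}}[f|y]\to L_{\xi}[f|y]$ for every $y$.

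It remains to identify the profile. Each $f_{T}\in C_{c}^{+}(\mathbb{R})$, so by unique support $L_{\xi}[f_{T}|\cdot]=g(\cdot-\tau_{f_{T}})$, and these converge pointwise to $L_{\xi}[f|\cdot]$ as $T\to\infty$. I would then apply Corollary \ref{cor:frz_specific_func} to $f$ with the monotone approximating sequence $(f_{T})$ --- assuming, as one may after conditioning on $\{\xi(\mathbb{R})>0\}$, that $\xi\gg0$ (the case $\mathbb{P}\{\xi(\mathbb{R})=0\}>0$ being immediate, since then $L_{\xi}[f|y]\ge\mathbb{P}\{\xi(\mathbb{R})=0\}$) --- so that it only remains to find $y_{0}$ with $L_{\xi}[f|y_{0}]\in(0,1)$. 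The strict upper bound $L_{\xi}[f|y]\le g(y-\tau_{f_{T_{0}}})<1$ is immediate from Lemma \ref{lem:g monotone}. For strict positivity somewhere, suppose to the contrary that $L_{\xi}[f|\cdot]\equiv0$; then $\langle\theta_{y}f,\xi\rangle=\infty$ almost surely for all $y$, and since $\xi$ is locally finite also $\langle\theta_{y}f\cdot\mathbf{1}_{(-T,T)^{c}},\xi\rangle=\infty$ a.s.\ for every $T$; approximating $\theta_{y}f\cdot\mathbf{1}_{(-T,T)^{c}}$ from below by functions in $C_{c}^{+}(\mathbb{R})$ and using $\xi_{n}\overset{d}{\to}\xi$ shows that $\liminf_{n}\mathbb{P}\{\int\theta_{y}f\cdot\mathbf{1}_{(-T,T)^{c}}\,d\xi_{n}>M\}$ stays bounded away from $0$ for every $M,T$, contradicting \eqref{eq:1}. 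Hence $L_{\xi}[f|y_{0}]\in(0,1)$ for a suitable $y_{0}$, and the corollary gives $L_{\xi}[f|\cdot]\approx g(\cdot)$, as desired.

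The truncation bookkeeping above (the interchange of $\lim_{T}$ with $\limsup_{n}$ and the window shift) is routine; the step I expect to require the most care is ruling out the degenerate possibility $L_{\xi}[f|\cdot]\equiv0$. That is precisely where \eqref{eq:1} is used beyond the truncation itself: it is exactly the hypothesis that prevents the mass of $\langle\theta_{y}f,\xi_{n}\rangle$ from escaping to infinity, and hence guarantees that the weak limit $\langle\theta_{y}f,\xi\rangle$ is an honest random variable, a.s.\ finite on an event of positive probability, so that $L_{\xi}[f|\cdot]$ is genuinely of the form $g(\cdot-\tau_{f})$.
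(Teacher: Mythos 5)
Your proof is correct, and it follows the same overall strategy as the paper --- split $\int\theta_{y}f\,d\xi_{n}$ into a compactly supported bulk handled by vague convergence and a tail controlled uniformly in $n$ by \eqref{eq:1} --- but the implementation differs at two points, both legitimately. For the convergence $L_{\xi_{n}}\left[\left.f\,\right|\,y\right]\to L_{\xi}\left[\left.f\,\right|\,y\right]$, the paper first transfers \eqref{eq:1} to $\xi$ via the portmanteau theorem and then uses a Skorohod coupling to get $\int f_{y}h_{T}\,d\xi_{n}\to\int f_{y}h_{T}\,d\xi$ in probability, deducing convergence in distribution of the full integrals; your elementary bound $1-e^{-x}\le(1-e^{-\epsilon})+\mathbf{1}_{\{x>\epsilon\}}$ compares the Laplace functionals directly and avoids the coupling altogether (your window shift from $(-T,T)^{c}$ to $(-(T-|y|),T-|y|)^{c}$ is handled correctly, since \eqref{eq:1} truncates in the unshifted variable while your $f_{T}$ truncates the argument of $f$). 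For the identification of the profile, the paper argues directly from Lemma \ref{lem:g monotone}, the a.s.\ finiteness of $\int\theta_{y}f\,d\xi$, and Corollary \ref{cor:g limits}; you instead invoke Corollary \ref{cor:frz_specific_func} and rule out $L_{\xi}\left[\left.f\,\right|\,\cdot\right]\equiv0$ by contradiction through portmanteau, which is essentially the paper's finiteness argument in contrapositive form, so the ingredients coincide. Two small points of bookkeeping: as in the paper's own proof, you are reading \eqref{eq:1} as holding for every $\epsilon>0$; and in the reduction to $\xi\gg0$, if $0<\mathbb{P}\left\{\xi(\mathbb{R})=0\right\}<1$ the hypothesis of Corollary \ref{cor:frz_specific_func} must be checked for the conditioned process, i.e.\ one needs $L_{\xi}\left[\left.f\,\right|\,y_{0}\right]>\mathbb{P}\left\{\xi(\mathbb{R})=0\right\}$ rather than merely $>0$, which is not \emph{immediate} as you assert but does follow by running your contradiction argument on the event $\left\{\xi(\mathbb{R})>0\right\}$ (and the degenerate case $\xi=0$ a.s., where $g\equiv1$, should be dispatched separately, as the paper does in one line). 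Neither point is a genuine gap.
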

\begin{proof}
For any $T>0$ let $h_{T}:\mathbb{R}\rightarrow\left[0,1\right]$
be the continuous function that is equal to $1$ on $\left[-T,T\right]$,
is equal to $0$ on $\left[-\left(T+1\right),T+1\right]^{c}$, and
is linear in each of the two remaining intervals. Denote $h_{T}^{c}\triangleq1-h_{T}$.
Let $f\neq0$ be a nonnegative, continuous function such that (\ref{eq:4})
holds and let $\epsilon>0$.

First, note that for any $T>0$ and $y\in\mathbb{R}$, abbreviating
$f_{y}\triangleq\theta_{y}f$,
\begin{align}
 & \negthickspace\negthickspace\negthickspace\negthickspace\mathbb{P}\left\{ \int f_{y}\cdot h_{T}^{c}d\xi>\epsilon\right\} =\lim_{R\rightarrow\infty}\mathbb{P}\left\{ \int f_{y}\cdot\left(h_{T}^{c}-h_{R}^{c}\right)d\xi>\epsilon\right\} \nonumber \\
 & \leq\lim_{R\rightarrow\infty}\liminf_{n\rightarrow\infty}\mathbb{P}\left\{ \int f_{y}\cdot\left(h_{T}^{c}-h_{R}^{c}\right)d\xi_{n}>\epsilon\right\} \nonumber \\
 & \leq\liminf_{n\rightarrow\infty}\mathbb{P}\left\{ \int f_{y}\cdot h_{T}^{c}d\xi_{n}>\epsilon\right\} ,\label{eq:2}
\end{align}
where the first inequality follows from the convergence in distribution
of $\int f_{y}\cdot\left(h_{T}^{c}-h_{R}^{c}\right)d\xi_{n}\rightarrow\int f_{y}\cdot\left(h_{T}^{c}-h_{R}^{c}\right)d\xi$
and the portmanteau theorem. Hence, since
\[
\mathbb{P}\left\{ \int f_{y}d\xi=\infty\right\} =\mathbb{P}\left\{ \int f_{y}\cdot h_{T}^{c}d\xi=\infty\right\} ,
\]
by (\ref{eq:1}), $\int f_{y}d\xi<\infty$ almost surely.

Let $T>0$. Embed $\xi$, $\xi_{n}$, $n\geq1$, in the same probability
space such that $\int f_{y}\cdot h_{T}d\xi_{n}\rightarrow\int f_{y}\cdot h_{T}d\xi$
in probability (for example, by using Skorohod coupling, cf. Corollary
6.12 of \cite{Kallenberg2nd}). Write
\begin{align*}
 & \negthickspace\negthickspace\negthickspace\negthickspace\limsup_{n\rightarrow\infty}\mathbb{P}\left\{ \left|\int f_{y}d\xi_{n}-\int f_{y}d\xi\right|>3\epsilon\right\} \leq\mathbb{P}\left\{ \int f_{y}\cdot h_{T}^{c}d\xi>\epsilon\right\} \\
 & +\limsup_{n\rightarrow\infty}\mathbb{P}\left\{ \int f_{y}\cdot h_{T}^{c}d\xi_{n}>\epsilon\right\} +\limsup_{n\rightarrow\infty}\mathbb{P}\left\{ \left|\int f_{y}\cdot h_{T}d\xi_{n}-\int f_{y}\cdot h_{T}d\xi\right|>\epsilon\right\}
\end{align*}
and note that as $T\rightarrow\infty$, by (\ref{eq:4}) and (\ref{eq:5}),
the first and second summands tend to $0$, and, from convergence
in probability, so does the third summand.

Therefore $\int f_{y}d\xi_{n}\rightarrow\int f_{y}d\xi$ in distribution
and thus $\lim_{n\rightarrow\infty}L_{n}\left[\left.f\,\right|\,\cdot\,\right]=L\left[\left.f\,\right|\,\cdot\,\right]$.

Note that if $\xi=0$ then, obviously, $L_{\xi}\left[\left.f\,\right|\,\cdot\,\right]=g\left(\cdot\right)=1$.
Assume henceforth that $\xi\neq0$. By the monotone convergence theorem,
for any $y\in\mathbb{R}$,
\begin{equation}
L_{\xi}\left[\left.f\,\right|\, y\right]=\lim_{T\rightarrow\infty}\mathbb{E}\left\{ \exp\left(-\left\langle \theta_{y}\left(f\cdot h_{T}\right),\xi\right\rangle \right)\right\} =\lim_{T\rightarrow\infty}g\left(y-\tau_{T}\right),\label{eq:3}
\end{equation}
where the second equality follows, with appropriate $\tau_{T}$, from
the fact that $f\cdot h_{T}\in C_{c}^{+}\left(\mathbb{R}\right)$
for large $T$ and since $L_{\xi}\left[\left.f\,\right|\,\cdot\,\right]$
is uniquely supported.

By Lemma \ref{lem:g monotone} and the fact that $\int f_{y}d\xi<\infty$
almost surely,
\begin{align*}
1 & >L_{\xi}\left[\left.f\cdot h_{T}\,\right|\, y\right]\geq L_{\xi}\left[\left.f\,\right|\, y\right]=\mathbb{E}\left\{ \exp\left(-\left\langle \theta_{y}f,\xi\right\rangle \right)\right\} \\
 & >\mathbb{E}\left\{ \exp\left(-\infty\cdot\mathbf{1}_{\left\{ \xi\left(\mathbb{R}\right)>0\right\} }\right)\right\} =\inf_{y\in\mathbb{R}}g\left(y\right)
\end{align*}
and thus, by Corollary \ref{cor:g limits}, $L_{\xi}\left[\left.f\,\right|\, y\right]=g\left(y-\tau\right)$,
with $\lim_{T\rightarrow\infty}\tau_{T}=\tau\in\mathbb{R}$, for all
$y\in\mathbb{R}$, which completes the proof.
\end{proof}

\section{\label{sec:Appendix_Gum}Appendix I: Gumbel distributions of different scales}
This short appendix is devoted to a proposition which relates Gumbel distribution functions of different scales. We suspect that it must be known to experts but
we have not been able to locate neither such a statement in the literature
nor a direct, analytic proof.
The proposition follows from the observation made in the proof of Lemma \ref{lem:150414-1}: by composing an SDPPP with
a Poisson process of exponential density one obtains a process which has two substantially different representation as an SDPPP.
\begin{prop}
\label{prop:gum}
For any $c_{2}>c_{1}>0$,
\begin{equation}
\label{eq:Gumconv}
{\rm {Gum}}\left(c_{1}\left(y\right)\right)=\int{\rm {Gum}}\left(c_{2}\left(y-z\right)\right)d\mu\left(z\right),
\end{equation}
where $\mu=\mu_{c_{1},c_{2}}$ is the law of $S-\tau$ for some $\tau\in\mathbb R$, with $S \triangleq \frac{1}{c_2}\log\left(
\left\langle e^{c_2 x},\xi^{c_1}\right\rangle \right)$ and $\xi^{c_1}\sim DPPP\left(e^{-c_1 x}dx,\delta_{0}\right)$.
\end{prop}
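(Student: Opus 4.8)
\textbf{Proof plan for Proposition \ref{prop:gum}.}

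The plan is to mimic the double-representation argument from the proof of Lemma \ref{lem:150414-1}, but now starting from a DPPP with trivial decoration rather than from a general $\xi$. First I would fix $c_2 > c_1 > 0$ and let $\xi^{c_1} = \sum_{i\geq 1}\delta_{\xi_i} \sim DPPP(e^{-c_1 x}\,dx,\delta_0)$, which is just a Poisson point process with intensity $e^{-c_1 x}\,dx$. Form the superposition $\eta = \sum_{i\geq 1}\theta_{\xi_i}P_i$, where the $P_i$ are i.i.d.\ Poisson point processes with intensity $e^{-c_2 x}\,dx$, independent of each other and of $\xi^{c_1}$. Conditionally on $\xi^{c_1}$, the process $\eta$ is Poisson with intensity $e^{-c_2 x}\langle e^{c_2 y},\xi^{c_1}\rangle\,dx$, so with $S \triangleq \frac1{c_2}\log\langle e^{c_2 x},\xi^{c_1}\rangle$ we get $\eta \sim SDPPP(e^{-c_2 x}\,dx,\delta_0,S)$. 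I should check that $S$ is a.s.\ finite, i.e.\ that $\langle e^{c_2 x},\xi^{c_1}\rangle < \infty$ a.s.; this holds because $\int e^{c_2 x}e^{-c_1 x}\mathbf 1_{(-\infty,0]}\,dx < \infty$ for $c_2 > c_1$, and the finitely many atoms of $\xi^{c_1}$ above $0$ contribute a finite sum, so $\mathbb E\langle e^{c_2 x}\mathbf 1_{(-\infty,0]},\xi^{c_1}\rangle < \infty$ and the tail above $0$ is finite a.s.

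Next I would compute the shift-Laplace functional of $\eta$ evaluated at $\infty\cdot\mathbf 1_{(0,\infty)}$, which gives $\mathbb P\{\mathcal M(\eta)\leq y\}$, in two ways. On one hand, from $\eta \sim SDPPP(e^{-c_2 x}\,dx,\delta_0,S)$ and the converse part of Theorem \ref{thm:main} (or the direct computation of Section \ref{sec:dppp_to_frz}), $\mathbb P\{\mathcal M(\eta)\leq y \mid S\} = {\rm Gum}(c_2(y - S - \tau))$ for an $S$-independent constant $\tau$ (the shift $\tau_{\infty\cdot\mathbf 1_{(0,\infty)}}^{{\rm Gum}(c_2 y)}$ of the base $DPPP(e^{-c_2 x},\delta_0)$), so $\mathbb P\{\mathcal M(\eta)\leq y\} = \int {\rm Gum}(c_2(y - z))\,d\mu(z)$ with $\mu$ the law of $S + \tau$. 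On the other hand, $\mathcal M(\eta) = \sup_i (\xi_i + \mathcal M(P_i))$, and conditionally on $\xi^{c_1}$ the variables $\mathcal M(P_i)$ are i.i.d.\ with $\mathbb P\{\mathcal M(P_i)\leq t\} = {\rm Gum}(c_2(t - \tau))$, so $\mathbb P\{\mathcal M(\eta)\leq y\mid \xi^{c_1}\} = \prod_i {\rm Gum}(c_2(y - \xi_i - \tau)) = \exp\{-\sum_i e^{-c_2(y-\xi_i-\tau)}\} = \exp\{-e^{c_2\tau}\langle e^{c_2 x},\xi^{c_1}\rangle e^{-c_2 y}\}$. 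Taking the expectation over $\xi^{c_1}$ and recognizing $e^{c_2\tau}\langle e^{c_2 x},\xi^{c_1}\rangle e^{-c_2 y} = e^{-c_2(y - S - \tau)}$, this is exactly $\mathbb E\exp\{-e^{-c_2(y-S-\tau)}\}$ again — consistent, but not yet the other representation. To get the $c_1$-side I instead observe directly: $\mathbb P\{\mathcal M(\eta)\leq y\mid \xi^{c_1}\}$ as written equals $\mathbb P\{\mathcal M(\xi^{c_1})\leq y'\}$-type expressions are not literally what appears; the cleaner route is to note $\mathbb E\exp\{-\langle\theta_y(\infty\cdot\mathbf 1_{(0,\infty)}),\xi^{c_1}\rangle\} = \mathbb P\{\mathcal M(\xi^{c_1})\leq y\}$, and since $\xi^{c_1}$ is itself $DPPP(e^{-c_1 x},\delta_0)$, Section \ref{sec:dppp_to_frz} gives $\mathbb P\{\mathcal M(\xi^{c_1})\leq y\} = {\rm Gum}(c_1(y - \tau'))$ for a constant $\tau'$. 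Thus I would recompute $\mathbb P\{\mathcal M(\eta)\leq y\mid\xi^{c_1}\} = \exp\{-e^{c_2\tau}\langle e^{c_2x},\xi^{c_1}\rangle e^{-c_2y}\}$ and match it, after the substitution, to a convolution of ${\rm Gum}(c_1(\cdot))$ with $\mu$; concretely, the clean identity to aim for is obtained by evaluating $\mathbb E\exp\{-\langle\theta_y e^{c_2 x},\xi^{c_1}\rangle\}$ in two ways, exactly as in Lemma \ref{lem:150414-1}: one gets both $\mathbb E[{\rm Gum}(c_2(y - S - \tau))]$ (via $\eta \sim SDPPP(e^{-c_2x},\delta_0,S)$) and ${\rm Gum}(c_1(y - \tau''))$ (since $L_{\xi^{c_1}}[e^{c_2 x}\mid\cdot]\approx {\rm Gum}(c_1(\cdot))$ by Corollary \ref{cor:frz_specific_func} applied to the base DPPP). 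Equating and absorbing all constants into a single $\tau\in\mathbb R$ yields \eqref{eq:Gumconv} with $\mu$ the law of $S - \tau$.

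Finally I would assemble the pieces: set $\tau$ so that the constants from the two evaluations agree (this is where $\mu = \mu_{c_1,c_2}$, the law of $S - \tau$, is pinned down), and note that $\mu$ is a genuine probability measure because $S$ is a.s.\ finite and real-valued. The main obstacle — and it is a mild one — is bookkeeping the several additive constants ($\tau$, $\tau'$, $\tau''$) coming from the shifts of the base $DPPP(e^{-c_2 x},\delta_0)$ and $DPPP(e^{-c_1 x},\delta_0)$ and verifying they combine into a single real shift; and, slightly more substantively, justifying the interchange of expectation and the infinite product/sum when computing $\mathbb P\{\mathcal M(\eta)\leq y\mid\xi^{c_1}\}$, for which one uses that $\langle e^{c_2 x},\xi^{c_1}\rangle<\infty$ a.s.\ together with monotone convergence (approximating $\infty\cdot\mathbf 1_{(0,\infty)}$ and $e^{c_2 x}$ by increasing sequences in $C_c^+(\mathbb R)$, exactly as in Corollary \ref{cor:max_of_tail} and Lemma \ref{lem:150414-1}). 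No tightness or limiting argument over a sequence $\beta_i\downarrow c$ is needed here, since both $c_1$ and $c_2$ are fixed; this makes the proof strictly shorter than that of Lemma \ref{lem:150414-1}.
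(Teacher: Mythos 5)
Your proposal is correct and follows essentially the same route as the paper: you build the same two-level process $\eta$ (a $DPPP(e^{-c_1x}dx,\xi^{c_2})$, equivalently $SDPPP(e^{-c_2x}dx,\delta_0,S)$ by the conditioning argument of Lemma \ref{lem:150414-1}), verify $\langle e^{c_2x},\xi^{c_1}\rangle<\infty$ a.s., and then equate the two resulting expressions for the shift-Laplace functional using the converse part of Theorem \ref{thm:main} together with Corollary \ref{cor:frz_specific_func}, which is exactly how the paper concludes (your detour through $\mathcal M(\eta)$ is harmless and you correctly redirect to the exponential test function).
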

\begin{proof}
For any $c>0$ let $\xi^c\sim DPPP\left(e^{-cx}dx,\delta_{0}\right).$
Note that
\begin{align*}
\mathbb{E}\left\{ \left\langle e^{c_2 x},\left.\xi^{c_1}\right|_{\left(-\infty,0\right)}\right\rangle \right\}  & =\sum_{n=0}^{\infty}\mathbb{E}\left\{ \left\langle e^{c_2 x},\left.\xi^{c_1}\right|_{\left[-(n+1),-n\right)}\right\rangle \right\} \\
 & \leq\frac{1}{c_{1}}\sum_{n=0}^{\infty}e^{-c_2 n}\left(e^{c_{1}\left(n+1\right)}-e^{c_{1}n}\right)<\infty.
\end{align*}
Thus, since $\xi^{c_1}\left([0,\infty)\right)<\infty$ a.s., $\left\langle e^{c_2 x},\xi^{c_1}\right\rangle <\infty$
a.s.

Let
\[
\eta \sim SDPPP\left( e^{-c_1 x}dx, \xi^{c_2}, 0 \right).
\]
By the argument leading to
\eqref{eq:etaSDPPP}
in the proof of Lemma \ref{lem:150414-1}, we also have
\[
\eta \sim SDPPP\left( e^{-c_2 x}dx, \delta_0, S \right),
\]
where $S$ is defined in the statement of the proposition.

From the converse part of Theorem 9, $\eta$ satisfies (SUS) with both the functions (of $y$) appearing in the two sides of \eqref{eq:Gumconv}.
The converse part also gives $\tau$ explicitly, using  \eqref{eq:31}. This completes the proof.
\end{proof}

\section{\label{sec:Appendix}Appendix II: point processes}

Denote by $\mathcal{N}$ the space of positive, locally finite, counting
measures on $\mathbb{R}$. That is, Borel measures $\mu$ such that
$\mu\left(B\right)\in\mathbb{Z}_{+}=\left\{ 0,1,2,\ldots\right\} $
for any bounded Borel set $B$. Endow $\mathcal{N}$ with the vague
topology (cf. Appendix 7 of \cite{KAllenbergMeas}) and let $\mathcal{A}$
be the corresponding Borel $\sigma$-algebra on $\mathcal{N}$. Note
that with this choice of topology $\mathcal{N}$ is Polish (cf. A
7.7 of \cite{KAllenbergMeas}) and therefore $\left(\mathcal{N},\mathcal{A}\right)$
is a Borel space (a fact we shall need for some coupling arguments
in Section \ref{sec:mainpf}). Given a probability space $\left(\Omega,\mathcal{F},\mathbb{P}\right)$,
a point process (on $\mathbb{R}$) is a measurable mapping $\left(\Omega,\mathcal{F}\right)\rightarrow\left(\mathcal{N},\mathcal{A}\right)$.
(Since all the point processes we consider are on $\mathbb{R}$, henceforth
we shall not state the space on which they are defined)

The Laplace functional of a point process $\xi$ is the mapping $M^{+}\left(\mathbb{R}\right)\rightarrow\left[0,1\right]$
given by
\[
L_{\xi}\left[f\right]\triangleq\mathbb{E}\left\{ \exp\left(-\left\langle f,\xi\right\rangle \right)\right\} ,\,\,\,\, f\in M^{+}\left(\mathbb{R}\right).
\]
When discussing convergence of a sequence of point processes $\xi_{n}$,
$n\geq1$, we shall consider convergence in distribution with respect
to the vague topology which we denote by $\overset{d}{\rightarrow}$.
As Theorem 4.2 of \cite{KAllenbergMeas} states, convergence in distribution
of the point processes $\xi_{n}\overset{d}{\rightarrow}\xi$ is equivalent
to convergence in distribution of the random variables $\left\langle f,\xi_{n}\right\rangle \rightarrow\left\langle f,\xi\right\rangle $,
for any $f\in C_{c}^{+}\left(\mathbb{R}\right)$, and to convergence
of the Laplace functionals $L_{\xi_{n}}\left[f\right]\rightarrow L_{\xi}\left[f\right]$,
for any $f\in C_{c}^{+}\left(\mathbb{R}\right)$.

According to Lemma 4.5 of \cite{KAllenbergMeas}, relative compactness
of a sequence $\xi_{n}$, $n\geq1$, with respect to convergence in
distribution in the vague topology is equivalent to tightness of the
sequence which is equivalent to
\begin{equation}
\lim_{t\rightarrow\infty}\limsup_{n\rightarrow\infty}\mathbb{P}\left\{ \xi_{n}\left(B\right)>t\right\} =0,\,\,\,\,\mbox{for any bounded Borel set }B.\label{eq:30-1}
\end{equation}

In particular, it follows that if (\ref{eq:30-1}) holds and $L_{\xi_{n}}\left[f\right]\rightarrow\varphi\left[f\right]$,
for any $f\in C_{c}^{+}\left(\mathbb{R}\right)$, for some function
$\varphi:C_{c}^{+}\left(\mathbb{R}\right)\rightarrow\mathbb{R}$,
then there exists a point process $\xi$ such that $\varphi\left[f\right]=L_{\xi}\left[f\right]$
on $C_{c}^{+}\left(\mathbb{R}\right)$ and $\xi_{n}\overset{d}{\rightarrow}\xi$.

\subsection*{Acknowledgments}

The authors would like to thank Pascal Maillard for many helpful discussions.

\bibliographystyle{amsplain}
\bibliography{master}

\providecommand{\bysame}{\leavevmode\hbox to3em{\hrulefill}\thinspace}
\providecommand{\MR}{\relax\ifhmode\unskip\space\fi MR }
\providecommand{\MRhref}[2]{%
  \href{http://www.ams.org/mathscinet-getitem?mr=#1}{#2}
}
\providecommand{\href}[2]{#2}
\begin{thebibliography}{10}

\bibitem{ABBS}
E.~A{\"{\i}}d{\'e}kon, J.~Berestycki, {\'E}.~Brunet, and Z.~Shi,
  \emph{Branching {B}rownian motion seen from its tip}, Probab. Theory Related
  Fields \textbf{157} (2013), no.~1-2, 405--451. \MR{3101852}

\bibitem{MulCha2}
R.~Allez, R.~Rhodes, and V.~Vargas, \emph{Lognormal {$\star$}-scale invariant
  random measures}, Probab. Theory Related Fields \textbf{155} (2013), no.~3-4,
  751--788. \MR{3034792}

\bibitem{ABKGene}
L.-P. Arguin, A.~Bovier, and N.~Kistler, \emph{Genealogy of extremal particles
  of branching {B}rownian motion}, Comm. Pure Appl. Math. \textbf{64} (2011),
  no.~12, 1647--1676. \MR{2838339 (2012g:60269)}

\bibitem{ABKPois}
\bysame, \emph{Poissonian statistics in the extremal process of branching
  {B}rownian motion}, Ann. Appl. Probab. \textbf{22} (2012), no.~4, 1693--1711.
  \MR{2985174}

\bibitem{ABKergodic}
\bysame, \emph{An ergodic theorem for the frontier of branching {B}rownian
  motion}, Electron. J. Probab. \textbf{18} (2013), no. 53, 25. \MR{3065863}

\bibitem{ABKextremal}
\bysame, \emph{The extremal process of branching {B}rownian motion}, Probab.
  Theory Related Fields \textbf{157} (2013), no.~3-4, 535--574. \MR{3129797}

\bibitem{A&Z}
L.-P. Arguin and Z.~Olivier, \emph{{P}oisson-{D}irichlet statistics for the
  extremes of a log-correlated {G}aussian field}, preprint, arXiv:1203.4216
  [math.PR] (2012).

\bibitem{Louidor}
M.~Biskup and O.~Louidor, \emph{Extreme local extrema of two-dimensional
  discrete gaussian free field}, preprint, arXiv:1306.2602 [math.PR] (2013).

\bibitem{BHartung}
A.~Bovier and L.~Hartung, \emph{The extremal process of two-speed branching
  brownian motion}, preprint, arXiv:1308.1868 [math.PR] (2013).

\bibitem{Bramson78}
M.~Bramson, \emph{Maximal displacement of branching {B}rownian motion}, Comm.
  Pure Appl. Math. \textbf{31} (1978), no.~5, 531--581. \MR{0494541 (58
  \#13382)}

\bibitem{Bramson83}
\bysame, \emph{Convergence of solutions of the {K}olmogorov equation to
  travelling waves}, Mem. Amer. Math. Soc. \textbf{44} (1983), no.~285, iv+190.
  \MR{705746 (84m:60098)}

\bibitem{BDZGff}
M.~Bramson, J.~Ding, and O.~Zeitouni, \emph{Convergence in law of the maximum
  of the two-dimensional discrete gaussian free field}, preprint,
  arXiv:1301.6669 [math.PR] (2013).

\bibitem{BruDeBBM}
{\'E}.~Brunet and B.~Derrida, \emph{A branching random walk seen from the tip},
  J. Stat. Phys. \textbf{143} (2011), no.~3, 420--446. \MR{2799946
  (2012f:60296)}

\bibitem{BruDeBBMPreliminary}
\bysame, \emph{A branching random walk seen from the tip}, preprint,
  arXiv:1011.4864 [cond-mat.stat-mech] (2011), preliminary version of
  {\cite{BruDeBBM}}.

\bibitem{GFF1}
J.~L. Cardy, \emph{Conformal invariance and statistical mechanics}, Champs,
  cordes et ph\'enom\`enes critiques ({L}es {H}ouches, 1988), North-Holland,
  Amsterdam, 1990, pp.~169--245. \MR{1052933 (91f:81001)}

\bibitem{CLD}
D.~Carpentier and P.~Le~Doussal, \emph{Glass transition of a particle in a
  random potential, front selection in nonlinear renormalization group, and
  entropic phenomena in liouville and sinh-gordon models}, Phys. Rev. E
  \textbf{63} (2001), 026110.

\bibitem{CR}
B.~Chauvin and A.~Rouault, \emph{Supercritical branching {B}rownian motion and
  {K}-{P}-{P} equation in the critical speed-area}, Math. Nachr. \textbf{149}
  (1990), 41--59. \MR{1124793 (92m:60075)}

\bibitem{cauchy}
M.~G. Darboux, \emph{Sur le th\'eor\`eme fondamental de la g\'eom\'etrie
  projective}, Math. Ann. \textbf{17} (1880), no.~1, 55--61. \MR{1510050}

\bibitem{DMZ}
Y.~Davydov, I.~Molchanov, and S.~Zuyev, \emph{Strictly stable distributions on
  convex cones}, Electron. J. Probab. \textbf{13} (2008), no. 11, 259--321.
  \MR{2386734 (2009e:60036)}

\bibitem{deHaan}
L.~de~Haan and A.~Ferreira, \emph{Extreme value theory}, Springer Series in
  Operations Research and Financial Engineering, Springer, New York, 2006, An
  introduction. \MR{2234156 (2007g:62008)}

\bibitem{LDbook}
A.~Dembo and O.~Zeitouni, \emph{Large deviations techniques and applications},
  second ed., Applications of Mathematics (New York), vol.~38, Springer-Verlag,
  New York, 1998. \MR{1619036 (99d:60030)}

\bibitem{REM}
B.~Derrida, \emph{Random-energy model: an exactly solvable model of disordered
  systems}, Phys. Rev. B (3) \textbf{24} (1981), no.~5, 2613--2626. \MR{627810
  (83a:82018)}

\bibitem{DerSpo}
B.~Derrida and H.~Spohn, \emph{Polymers on disordered trees, spin glasses, and
  traveling waves}, J. Statist. Phys. \textbf{51} (1988), no.~5-6, 817--840,
  New directions in statistical mechanics (Santa Barbara, CA, 1987). \MR{971033
  (90i:82045)}

\bibitem{GFF2}
P.~Di~Francesco, P.~Mathieu, and D.~S{\'e}n{\'e}chal, \emph{Conformal field
  theory}, Graduate Texts in Contemporary Physics, Springer-Verlag, New York,
  1997. \MR{1424041 (97g:81062)}

\bibitem{DRSV2012}
B.~Duplantier, R.~Rhodes, S.~Sheffield, and V.~Vargas, \emph{Critical gaussian
  multiplicative chaos: Convergence of the derivative martingale}, preprint,
  arXiv:1206.1671 [math.PR] (2012), arXiv:1206.1671.

\bibitem{Feller2nd}
W.~Feller, \emph{An introduction to probability theory and its applications.
  {V}ol. {II}.}, Second edition, John Wiley \& Sons Inc., New York, 1971.
  \MR{0270403 (42 \#5292)}

\bibitem{GFF3}
R.~Fern{\'a}ndez, J.~Fr{\"o}hlich, and A.~D. Sokal, \emph{Random walks,
  critical phenomena, and triviality in quantum field theory}, Texts and
  Monographs in Physics, Springer-Verlag, Berlin, 1992. \MR{1219313
  (94j:81140)}

\bibitem{Fyodorov}
Y.~V. Fyodorov, \emph{Multifractality and freezing phenomena in random energy
  landscapes: An introduction}, Physica A: Statistical Mechanics and its
  Applications \textbf{389} (2010), no.~20, 4229 -- 4254, <ce:title>Proceedings
  of the 12th International Summer School on Fundamental Problems in
  Statistical Physics</ce:title>.

\bibitem{F&B1}
Y.~V. Fyodorov and J.-P. Bouchaud, \emph{Freezing and extreme-value statistics
  in a random energy model with logarithmically correlated potential}, J. Phys.
  A \textbf{41} (2008), no.~37, 372001, 12. \MR{2430565 (2010g:82036)}

\bibitem{Fyo3}
Y.~V. Fyodorov and J.~P. Keating, \emph{Freezing transitions and extreme
  values: Random matrix theory, {$\zeta (1/2+it)$}, and disordered landscapes},
  preprint, arXiv:1211.6063 [math-ph] (2012).

\bibitem{Fyo2}
Y.~V. Fyodorov, P.~Le~Doussal, and A.~Rosso, \emph{Statistical mechanics of
  logarithmic {REM}: duality, freezing and extreme value statistics of {$1/f$}
  noises generated by {G}aussian free fields}, J. Stat. Mech. Theory Exp.
  (2009), no.~10, P10005, 32. \MR{2882779}

\bibitem{Fyo1}
\bysame, \emph{Counting function fluctuations and extreme value threshold in
  multifractal patterns: the case study of an ideal {$1/f$} noise}, J. Stat.
  Phys. \textbf{149} (2012), no.~5, 898--920. \MR{2999566}

\bibitem{Gumbel}
E.~J. Gumbel, \emph{The distribution of the range}, Ann. Math. Statistics
  \textbf{18} (1947), 384--412. \MR{0022331 (9,195a)}

\bibitem{Kahane}
J.-P. Kahane, \emph{Sur le chaos multiplicatif}, Ann. Sci. Math. Qu\'ebec
  \textbf{9} (1985), no.~2, 105--150. \MR{829798 (88h:60099a)}

\bibitem{KAllenbergMeas}
O.~Kallenberg, \emph{Random measures}, third ed., Akademie-Verlag, Berlin,
  1983. \MR{818219 (87g:60048)}

\bibitem{Kallenberg2nd}
\bysame, \emph{Foundations of modern probability}, second ed., Probability and
  its Applications (New York), Springer-Verlag, New York, 2002. \MR{1876169
  (2002m:60002)}

\bibitem{KPP}
A.~Kolmogorov, I.~Petrovsky, and N.~Piscounov, \emph{Etude de l{'} {\'e}quation
  de la diffusion avec croissance de la quantit{\'e} de mati{\`e}re et son
  application {\`a} un probl{\`e}me biologique}, Moscou Universitet. Bull.
  Math. \textbf{1} (1937), 1--25.

\bibitem{Lalley&Sellke}
S.~P. Lalley and T.~Sellke, \emph{A conditional limit theorem for the frontier
  of a branching {B}rownian motion}, Ann. Probab. \textbf{15} (1987), no.~3,
  1052--1061. \MR{893913 (88h:60161)}

\bibitem{MadauleBRWext}
T.~Madaule, \emph{Convergence in law for the branching random walk seen from
  its tip}, preprint, arXiv:1107.2543 [math.PR] (2011).

\bibitem{MadauleMax}
\bysame, \emph{Maximum of a log-correlated gaussian field}, preprint,
  arXiv:1307.1365 [math.PR] (2013).

\bibitem{MRVfrz}
T.~Madaule, R.~Rhodes, and V.~Vargas, \emph{Glassy phase and freezing of
  log-correlated gaussian potentials}, preprint, arXiv:1310.5574 [math.PR]
  (2013).

\bibitem{Pascal}
P.~Maillard, \emph{A note on stable point processes occurring in branching
  {B}rownian motion}, Electron. Commun. Probab. \textbf{18} (2013), no. 5, 9.
  \MR{3019668}

\bibitem{McKean}
H.~P. McKean, \emph{Application of {B}rownian motion to the equation of
  {K}olmogorov-{P}etrovskii-{P}iskunov}, Comm. Pure Appl. Math. \textbf{28}
  (1975), no.~3, 323--331. \MR{0400428 (53 \#4262)}

\bibitem{MulCha3}
R.~Rhodes, J.~Sohier, and V.~Vargas, \emph{Star-scale invariant random
  measures}, to appear in the Annals of Probability (2012).

\bibitem{MulCha4}
R.~Rhodes and V.~Vargas, \emph{Multidimensional multifractal random measures},
  Electron. J. Probab. \textbf{15} (2010), no. 9, 241--258. \MR{2609587
  (2011d:60151)}

\bibitem{RVreview}
\bysame, \emph{Gaussian multiplicative chaos and applications: a review},
  preprint, arXiv:1305.6221 [math.PR] (2013).

\bibitem{MulCha1}
R.~Robert and V.~Vargas, \emph{Gaussian multiplicative chaos revisited}, Ann.
  Probab. \textbf{38} (2010), no.~2, 605--631. \MR{2642887 (2011c:60160)}

\bibitem{SheffGFF}
S.~Sheffield, \emph{Gaussian free fields for mathematicians}, Probab. Theory
  Related Fields \textbf{139} (2007), no.~3-4, 521--541. \MR{2322706
  (2008d:60120)}

\bibitem{Webb}
Christian Webb, \emph{Exact asymptotics of the freezing transition of a
  logarithmically correlated random energy model}, J. Stat. Phys. \textbf{145}
  (2011), no.~6, 1595--1619. \MR{2863721}

\end{thebibliography}

\end{document}